\documentclass[a4paper, intlimits, 10pt,reqno]{amsart}
\usepackage[T1]{fontenc}
\usepackage{float}

\usepackage{times}
\usepackage[english]{babel}
\usepackage[T1]{fontenc}
\usepackage{enumitem}
\usepackage[colorlinks=true, urlcolor=blue, citecolor=blue,linkcolor=blue,
linktocpage,pdfpagelabels, bookmarksnumbered,bookmarksopen]{hyperref}
\usepackage{graphicx}

\usepackage{subcaption}
\usepackage{tabularx}
\usepackage{color}
\usepackage{eurosym}
\usepackage[numbers, sort&compress]{natbib}

\usepackage[normalem]{ulem}
\usepackage{multirow}
\usepackage{indentfirst}
\usepackage{array}
\usepackage{accents}
\usepackage{tikz}
\usetikzlibrary{arrows.meta,positioning}

\setlist[enumerate]{topsep=2pt,itemsep=3pt,parsep=1pt} 
\usepackage{tocbasic}
\makeatletter
\def\@tocline#1#2#3#4#5#6#7{\relax
  \ifnum #1>\c@tocdepth 
  \else
    \par \addpenalty\@secpenalty\addvspace{#2}%
    \begingroup \hyphenpenalty\@M
    \@ifempty{#4}{%
      \@tempdima\csname r@tocindent\number#1\endcsname\relax
    }{%
      \@tempdima#4\relax
    }%
    \parindent\z@ \leftskip#3\relax \advance\leftskip\@tempdima\relax
    \rightskip\@pnumwidth plus4em \parfillskip-\@pnumwidth
    #5\leavevmode\hskip-\@tempdima
      \ifcase #1
       \or\or \hskip 1.75em \or \hskip 2.7em \else \hskip 3.7em \fi%
      #6\nobreak\relax
    \hfill\hbox to\@pnumwidth{\@tocpagenum{#7}}\par
    \nobreak
    \endgroup
  \fi}
\makeatother

\makeatletter
\newcommand{\sectionnotoc}[1]{%
  \begingroup
    \let\@tocwrite\@gobbletwo
    \section*{#1}%
  \endgroup
}
\makeatother

\makeatletter
\newcommand{\subsectionnotoc}[1]{%
  \begingroup
    \let\@tocwrite\@gobbletwo
    \subsection*{#1}%
  \endgroup
}
\makeatother

\makeatletter
\@ifundefined{subjclassname@2020}{%
  \@namedef{subjclassname@2020}{\textup{2020} Mathematics Subject Classification}%
}{}
\makeatother

\makeatletter
\def\subsection{\@startsection{subsection}{2}%
  \z@{.5\linespacing\@plus.7\linespacing}{.3\linespacing}%
  {\normalfont\bfseries}}
\makeatother

\usepackage{amsthm, amssymb, amsfonts}
\usepackage{amsmath}

\theoremstyle{plain}
\newtheorem{theorem}{Theorem}[section]
\newtheorem{corollary}[theorem]{Corollary}

\newtheorem{lemma}[theorem]{Lemma}
\newtheorem{proposition}[theorem]{Proposition}

\newtheorem{remark}[theorem]{Remark}

\newtheorem{problem}[theorem]{Problem}

\numberwithin{equation}{section}
\theoremstyle{definition}
\newtheorem{definition}[theorem]{Definition}

\begin{document}

\title{Weak Quadruple Comparison and Structure Theory beyond Alexandrov Geometry}

\date{\today}
\thanks{}

\subjclass[2020]{53C23, 53C60, 53B40, 53C70}

\author{Bang-Xian Han}

\author{Liming Yin}

\begin{abstract}
  We introduce a new four-point comparison principle, called the $(\varepsilon,\delta)$-weak quadruple condition, for non-Riemannian spaces with synthetic non-negative curvature.  
  This condition holds not only for classical Alexandrov spaces with non-negative curvature, but also for many genuinely non-Riemannian spaces.
  In particular, we show that this condition is intrinsic to spaces satisfying Ohta's $S$-concavity in the full parameter range.

  Using this comparison principle, we develop a non-symmetric strainer framework and establish a Burago--Gromov--Perelman-type structure theory for finite-dimensional $S$-concave Busemann concave spaces.
  We prove that these spaces have constant integer Hausdorff dimension, satisfy the measure contraction property, are rectifiable, and admit unique Banach tangent cones almost everywhere. 
  We further show that each such space contains an open dense topological manifold part of top dimension and full measure.
  Finally, we establish Hausdorff dimension estimates for singular strata and construct natural measure-theoretic stratifications of these spaces.
  Our framework includes Alexandrov spaces with non-negative curvature as a special case, and provides tools for studying Finslerian metric spaces whose tangent cones need not be metric cones and angles need not be symmetric.

  \bigskip
  \noindent
  {\textbf{Keywords}: non-Riemannian metric space, Busemann concave space, $S$-concavity, weak quadruple comparison, rectifiability, singular strata, stratification} 
\end{abstract}

\maketitle
\tableofcontents

\section{Introduction}\label{sect:introduction}
\subsection{Toward a non-Riemannian quadruple comparison}\label{subsect:non_riemannian_quadruple}
\noindent
A central problem in synthetic lower curvature geometry is to understand how much structure can be learned from comparison principles in non-smooth spaces.
In Alexandrov geometry, this structural role is played by the \emph{quadruple comparison condition}: for every quadruple of points $(a;b,c,d)$, the three comparison angles at the base point $a$ satisfy
\begin{equation}\label{qcc}
    \tilde{\angle}_{\kappa}bac +\tilde{\angle}_{\kappa}cad+\tilde{\angle}_{\kappa}dab
    \leq 2\pi, \tag{QCC}
\end{equation}
where $\tilde{\angle}_{\kappa}$ denotes the comparison angle in the $\kappa$-plane.
This principle provides the angle control underlying the angle-based framework of Alexandrov geometry and its strainer theory, which form the foundation of Burago--Gromov--Perelman's structure theory \cite{burago1992ad}, and its subsequent refinements by Otsu--Shioya, Perelman--Petrunin, Wu, Kapovitch, Ambrosio--Bertrand, and many others \cite{otsu1994riemannian,perelman1991alexandrov,perel1994elements,perelman1994dc,perel1994extremal,petrunin1998parallel,wu1997topological,kapovitch2002regularity,kapovitch2005restrictions,ambrosio2018dc}, and have also inspired further development of structure theory of spaces with curvature bounded above, notably $\mathrm{GCBA}$ spaces, by Lytchak--Nagano--Stadler \cite{lytchak2019geod,lytchak2021topological,nagano2026wall,stadler2024cat}.

However, full quadruple comparison is too rigid beyond the Riemannian setting.
For instance, a Banach space satisfies the quadruple comparison if and only if it is a Hilbert space; see, for example, \cite[Chapter 6]{alexander2024alexandrov}.
By contrast, even in prototypical non-Riemannian spaces such as $\ell^n_p$-spaces, the sum of the Euclidean comparison angles associated with a quadruple may approach $3\pi$ as $p$ becomes large.
Thus the classical quadruple comparison \eqref{qcc} is too restrictive for the non-Riemannian setting, even though part of its geometric content, especially the comparison control it provides, remains indispensable for the strainer method.
This naturally raises the question of whether there exists an analogue of the classical quadruple comparison that is weak enough to accommodate both Alexandrov spaces with curvature bounded below and a broad class of genuinely non-Riemannian spaces with meaningful lower curvature bounds, while still retaining the ingredients needed for a structure theory.

Motivated by this question, we introduce a new notion, called the \emph{$(\varepsilon,\delta)$-weak quadruple condition} (see Definition~\ref{def:weak_quadruple_condition}).
Here $\delta$ is the \emph{straightness parameter}, and $\varepsilon$ is the \emph{angle-excess parameter}.
It is a scale-sensitive and configuration-selective comparison principle tailored to the non-Riemannian setting of \emph{Busemann spaces with synthetic non-negative curvature}.
Unlike the classical quadruple condition, it is not imposed on all
quadruples; rather, it tests only nearly-collinear quadruples with scale-dependent hierarchical configurations of points, which arise naturally in the strainer theory when almost orthogonality is recovered between strainer pairs.
This makes the condition flexible enough to include non-Riemannian models, while still strong enough to recover the strainer machinery of the classical theory.

The main conceptual point of the paper is that the role of Alexandrov quadruple comparison in the strainer method can be replaced by this weaker, configuration-selective four-point comparison principle, which arises from \emph{$S$-concavity} itself, a natural synthetic notion of non-negative curvature for non-Riemannian spaces introduced in the next section.

\smallskip

\subsection{Busemann spaces with non-negative curvature}\label{sect:intro_2}
\noindent
The study of Busemann spaces with non-negative curvature belongs to the broader program of developing synthetic curvature bounds and structure theory for spaces that need not be Riemannian.
Foundational contributions to this program are Busemann's theories of $\mathrm{G}$-spaces and Busemann convex spaces, developed in
\cite{busemann1942metric,busemann1948spaces,busemann1955geometry}.
These theories have had a lasting influence on Hilbert geometry, Finsler geometry, geometric group theory, and asymptotic geometry; see, for example, \cite{papadopoulos2014Hilbert_geometry,papadopoulos2014metric,pogorelov1998busemann,berestovskii1977finite,thurston1996Gspace,andreev2017foundations}.
More recently, Fujioka--Gu \cite{fujioka2025top} studied the topological regularity of Busemann non-positively curved spaces satisfying the local geodesic extension property, referred to as $\mathrm{GNPC}$ spaces, and extended several results from the theory of $\mathrm{GCBA}$ spaces.

While Busemann's original theory primarily concerned non-positive sectional curvature, its comparison philosophy also inspired synthetic approaches to non-negative curvature.
Conditions related to what is now called \emph{Busemann concavity} appeared in several early works, such as Kelly--Straus \cite{kelly1958curvature} on Hilbert geometry and Kann \cite{kann1960bonnet} on positively curved $\mathrm{G}$-spaces.
The modern formulation was introduced and systematically studied by Kell \cite{kell2019sectional}.
Following Kell, a geodesic space is said to be Busemann concave if, for any two constant-speed geodesics $\gamma,\eta:[0,1]\to X$ emanating from a common point, the function
\begin{equation}
  t\mapsto \frac{ \mathsf{d}\left(\gamma(t), \eta(t)\right)}{t}
\end{equation}
is non-increasing on $(0,1]$.
Kell investigated the geometry of such spaces under the assumption that they carry a nontrivial Hausdorff measure, as well as their compatibility with the measure contraction property $\mathrm{MCP}$, a weak synthetic curvature-dimension conditions introduced independently by Ohta \cite{ohta2007measure} and Sturm \cite{sturm2006geometry2}.

Another natural synthetic notion of non-negative curvature for non-Riemannian spaces is Ohta's \emph{$S$-concavity}, inspired by the work of Ball--Carlen--Lieb \cite{ball1994sharp} on uniformly smooth Banach spaces. 
It requires that, for every point $p$ and every constant-speed geodesic $\gamma$,
\begin{equation}
  \mathsf{d}\left(p,\gamma\left(\frac12\right)\right)^2
  \geq
  \frac12 \mathsf{d}(p,\gamma(0))^2
  +
  \frac12 \mathsf{d}(p,\gamma(1))^2
  -
  \frac{S}{4}\mathsf{d}(\gamma(0),\gamma(1))^2.
\end{equation}
The special case $S=1$ recovers Alexandrov spaces with non-negative curvature, whereas $S>1$ accommodates genuinely non-Riemannian examples. 
In particular, this condition is closely related, in the Finsler setting, to $2$-uniform smoothness of tangent Minkowski norms, flag curvature and Shen's tangent curvature, as well as to generalized Alexandrov--Toponogov comparison results; see \cite{ohta2009uniform,ohta2021comparison}.

Spaces satisfying both of the above synthetic non-negative curvature conditions will be called \emph{$S$-concave Busemann concave spaces}. 
This class contains a broad range of classical and genuinely non-Riemannian examples.
Indeed, Alexandrov spaces with non-negative curvature belong to this class with $S=1$.
Moreover, $2$-uniformly smooth Banach spaces with strictly convex norms, together with their closed subspaces, provide basic non-Riemannian models in this class.
In particular, for every $p\in [2,\infty)$, the spaces $\ell^n_p:=(\mathbb{R}^n,\|\cdot\|_p), \ell_p$ and $L_p$ are $S$-concave with $S=p-1$ and Busemann concave.
The class also includes connected complete Berwald spaces without conjugate points, whose flag curvature is non-negative and whose tangent Minkowski norms are $2$-uniformly smooth with uniformly bounded smoothness constants; see Section~\ref{sect:S_and_Busemann_concavity} for more details.
This broad class constitutes the principal object of the present paper, yet its finer structure remains largely unexplored.

\smallskip

\subsection{From weak quadruple comparison to structure theory}\label{sect:intro_3}
\noindent
The main obstacle to passing from Alexandrov spaces to $S$-concave Busemann concave spaces is the failure of the classical angle-based framework: there is no single canonical notion of angle.
Instead, several notions of angle arise naturally, and they need not coincide; see, for example, \cite{balestro2017angles} for different notions of angle in normed spaces.
Moreover, these notions of angle may depend on the way they are viewed, need not be symmetric, and may fail to satisfy the angular triangle inequality.
Tangent cones, in turn, need not be metric cones.
Consequently, the classical arguments underlying the Burago--Gromov--Perelman structure theory cannot be directly transferred to the present setting.

In the first paper of this series \cite{han2025structure}, we developed an angular and asymmetric strainer-theoretic framework for locally semi-convex $S$-concave Busemann concave spaces, including basic properties of tangent cones, the quasi-metric structure of spaces of directions with common length, and non-symmetric strainer coordinates. 
The present paper builds on this framework from a different perspective: it identifies the weak quadruple condition as the finite-configuration comparison principle that supplies the angle control needed in the strainer method. 
This viewpoint separates the angular formalism from the four-point comparison mechanism underlying strainer arguments.
Concretely, the weak quadruple condition enters the strainer estimates as follows.
In many arguments, one first controls a single comparison angle by elementary distance estimates, often through the Euclidean law of cosines.
The weak quadruple condition then allows the remaining comparison angles in the same strainer configuration to be controlled successively, with errors governed by its two parameters.

The key insight behind this local mechanism is that this weak comparison principle is \emph{intrinsic} to $S$-concave spaces in the full parameter range: for every prescribed angle-excess parameter, $S$-concavity guarantees the required comparison inequality for all tested quadruples with a sufficiently small straightness parameter.
The precise formulation is discussed in Section~\ref{subsect:ideas}.
This observation provides the crucial bridge between the curvature condition and the comparison principle underlying the asymmetric strainer theory.
Building on this framework, we establish a Burago--Gromov--Perelman-type structure theory for finite-dimensional $S$-concave Busemann concave spaces.

Among other results, we establish constancy of dimension, compatibility with the measure contraction property, rectifiability, and almost-everywhere uniqueness of Banach tangent cones. 
We also prove the existence of an open dense topological manifold part of full measure, establish Hausdorff dimension estimates for singular strata, and construct measure-theoretic stratifications.
Together, these results provide a comprehensive structural picture for finite-dimensional $S$-concave Busemann concave spaces.
We state these main results explicitly in the following subsection.
\smallskip

\subsection{Main results}\label{subsect:main_results}
\noindent
Throughout this subsection, $X$ denotes a complete, separable geodesic space.

Our first main result identifies various notions of dimension and establishes the compatibility of these spaces with the measure contraction property. 
We refer to Section~\ref{sect:dim} for the precise definition of constancy of dimension.
Roughly speaking, this means that all bounded open subsets have the same Hausdorff dimension.

\begin{theorem}[Proposition~\ref{prop:MCP}]\label{thm:main_thm_1}
  Let $X$ be an $S$-concave Busemann concave space with $S\geq 1$.
  Then $X$ has finite Hausdorff dimension if and only if it has finite strainer number.
  In this case, these two quantities are integer-valued and coincide with the topological dimension of $X$.
  Moreover, if $n\in \mathbb{N}$ denotes this common integer, then $X$ has constant dimension $n$, carries a nontrivial $n$-dimensional Hausdorff measure, and $(X,\mathsf d,\mathcal{H}^n)$ satisfies the measure contraction property $\mathrm{MCP}(0,n)$.
\end{theorem}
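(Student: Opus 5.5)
The plan is to follow the Burago--Gromov--Perelman scheme, using the non-symmetric strainers of \cite{han2025structure} as coordinates. Throughout, the $(\varepsilon,\delta)$-weak quadruple condition, which holds intrinsically in $S$-concave spaces, replaces \eqref{qcc} wherever angle control is needed.

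\emph{Step 1: finite strainer number implies finite Hausdorff dimension.} Suppose the strainer number is $n<\infty$. Near a point $p$ admitting an $(n,\delta)$-strainer $\{(a_i,b_i)\}$ of maximal length, I would show that the distance map $f=(\mathsf d(a_1,\cdot),\dots,\mathsf d(a_n,\cdot))$ is bi-Lipschitz onto an open subset of $\mathbb R^n$. The Lipschitz bound is trivial. For openness and co-Lipschitz control, I would argue by successive approximation. One moves toward or away from $a_i$ along geodesics, which changes $\mathsf d(a_i,\cdot)$ at nearly unit rate. Changes in the other coordinates are controlled by almost-orthogonality of strainer pairs. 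This almost-orthogonality is exactly what the weak quadruple condition yields: the law of cosines fixes one comparison angle, and the condition bounds the remaining ones up to error $\varepsilon$. Injectivity with a lower Lipschitz bound uses maximality. If two points $x,y$ with $\mathsf d(x,y)$ much larger than $|f(x)-f(y)|$ existed, then the pair built from $x$ and a point extended beyond $y$ (extension being available via Busemann concavity and completeness up to small error) would be almost orthogonal to all $(a_i,b_i)$. That would produce an $(n+1)$-strainer, a contradiction. Hence the set of points with $n$-strainers is locally bi-Lipschitz to $\mathbb R^n$. A covering argument then bounds the $\mathcal H^n$-measure of balls: rescaled packing numbers are controlled by the fact that there are no $(n+1)$-strainers, exactly as in BGP. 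This gives $\dim_H X\le n$ and also the lower bound $\dim_H\ge n$ on strained regions.

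\emph{Step 2: the converse and equality.} Conversely, if the strainer number is infinite, then for every $m$ some point admits an $(m,\delta)$-strainer. Step 1 applied to $m$-strainers gives a bi-Lipschitz image of an open set of $\mathbb R^m$, so $\dim_H X\ge m$ for every $m$. Hence finite Hausdorff dimension forces finite strainer number, and both equal $n$. For the topological dimension, I would use two facts. The bi-Lipschitz charts give $\dim_{\mathrm{top}}\ge n$, and $\dim_{\mathrm{top}}\le\dim_H$ always holds, so the two agree.

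\emph{Step 3: constancy of dimension.} I expect this to be the main obstacle. It requires showing that every bounded open set $U$ contains a point with an $n$-strainer, not just some point of $X$. I would try the Busemann concavity contraction toward a point $q$ that carries an $n$-strainer. The map $x\mapsto$ (the point at time $t$ on a geodesic from $q$ to $x$) is a $t$-Lipschitz-from-below homothety in the sense of Kell. Its inverse therefore expands distances by at most $1/t$ along contracted geodesics. Pushing a strained neighbourhood of a point near $q$ into $U$ in this way, and arguing via the packing bound, shows that $\mathcal H^n(U)>0$. Combined with Step 1 on $U$, this gives $\dim_H U=n$. Some care is needed because geodesics may be non-unique, so the contraction is a measurable selection. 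For this I would invoke the measure-theoretic results of Kell on Busemann concave spaces with nontrivial Hausdorff measure. Those results also provide local finiteness and positivity of $\mathcal H^n$.

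\emph{Step 4: the measure contraction property.} Once $\mathcal H^n$ is nontrivial and locally finite, Busemann concavity gives $\mathrm{MCP}(0,n)$ for $(X,\mathsf d,\mathcal H^n)$ by Kell's theorem. Concretely, the $t$-contraction toward $q$ shrinks distances by at most the factor $t$, so $\mathcal H^n(\text{contracted set})\ge t^n\,\mathcal H^n(\text{set})$, which is precisely the $\mathrm{MCP}(0,n)$ inequality.
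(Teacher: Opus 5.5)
Your overall architecture matches the paper: a chart from a strainer map, injectivity by contradiction with an $(n+1)$-strainer, and a Kell-type contraction for constancy and $\mathrm{MCP}$. The genuine gap is the upper bound $\dim_H X\le n$ in Step~1, which you get from a BGP-type packing argument. That argument is not available at this stage. Its analogue here is the cylinder estimate of Lemma~\ref{lemma:singular_set_estimate}, which rests on Lemma~\ref{lemma:number_points} and on the uniform compactness of $(\Sigma^l_xX,\angle_x)$ in Lemma~\ref{lem:uniform_compactness_space_of_directions_common_length}. Both require $X$ to be doubling, and doubling is only obtained from $\mathrm{MCP}(0,n)$, i.e.\ from the statement you are proving, so the step is circular.

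No packing is needed. The paper fixes $x$ with strainer number $m$ and finds one $(m,\delta,\delta)$-strained point $z$ near $x$. It takes a neighbourhood $W$ of $z$ bi-Lipschitz to an open subset of $\mathbb{R}^m$ (Proposition~\ref{prop:bi_Lipschitz}). It then applies Kell's lemma that all bounded open subsets of a Busemann concave space have the same Hausdorff dimension. This gives $\dim_H U=\dim_H W=m$ for every bounded open $U$, hence both inequalities, constancy, and equality of the strainer number at all points.

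Your Step~3 contraction is exactly the mechanism behind Kell's lemma, but the centre must lie in the set you push into, and you must use it in both directions. For $\Phi_t(a):=\gamma_{qa}(t)$, Busemann concavity gives $|\Phi_t(a)\Phi_t(b)|\ge t|ab|$. Hence $\dim_H\Phi_t(A)\ge \dim_H A$ and $\mathcal{H}^m(\Phi_t(A))\ge t^m\mathcal{H}^m(A)$ for bounded $A$. Taking $q\in U$, $A=W$ gives $\dim_H U\ge m$; taking $q\in W$, $A=U$ gives $\dim_H U\le m$ and $\mathcal{H}^m(U)<\infty$. As written, you contract toward the strained point, which moves nothing into $U$.

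Second, your injectivity step uses ``a point extended beyond $y$''. Busemann concavity and completeness give no geodesic extension: the class contains spaces with boundary, such as $[0,1]$ or convex bodies in $\ell^n_p$. At this stage you also know nothing about the local structure near $y$. The paper needs no extension. If $f(x)=f(y)$, take $z$ on a geodesic from $x$ to $y$ with $|xz|/|yz|$ tiny, so $(y,x)$ is a $(1,\delta)$-pair at $z$ with $\tilde{\angle}yzx=\pi$. The law of cosines with $|p_ix|=|p_iy|$ controls $\tilde{\angle}p_izy$. The weak quadruple condition on $(z;p_i,q_i,y)$, $(y;p_i,q_i,z)$ and their analogues with $x$, together with the bound $\pi$ on adjacent angles viewed from $p_i,q_i$, controls the other three angles. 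Injectivity plus openness then yields the bi-Lipschitz chart.

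Two bookkeeping points also need care:
\begin{enumerate}
\item The $(n+1)$-strainer you build has orthogonality defect about $5\delta+2\tilde{\varepsilon}$. ``Strainer number $n$ at $x$'' only excludes $(n+1,\bar\delta,\bar\delta)$-strained points near $x$ for some uncontrolled $\bar\delta$. So either choose $\tilde{\varepsilon},\delta$ after $\bar\delta$, or invoke Lemma~\ref{lemma:almost_self_improvement} as the paper does.
\item Almost orthogonality here is one-sided: moving toward $p_j$ controls $\mathsf{d}_{p_i}$ only for $i<j$. Your successive approximation must respect this order, which the paper encodes through an anisotropic $\ell_1$-norm.
\end{enumerate}
In the infinite case no injectivity is available, but Lipschitz plus openness already gives $\dim_H\ge m$.
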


The next theorem extends some well-known geometric measure-theoretic properties of Alexandrov spaces with curvature bounded below to the setting of $S$-concave Busemann concave spaces.
\begin{theorem}[Proposition~\ref{prop:rectifiability_Banach_tangent_cone} and Corollary~\ref{cor:characterize_Banach_tangent_cone}]\label{thm:main_thm_2}
  Let $X$ be an $n$-dimensional $S$-concave Busemann concave space with $S\geq 1$.
  Then $X$ is $n$-rectifiable and $\mathcal{H}^n$-a.e.\ point admits a unique tangent cone isometric to an $n$-dimensional Banach space.
  If $X$ further satisfies local $p$-uniform convexity with $p\geq S+1$, then all tangent cones of $X$ are uniquely geodesic.
  Moreover, all Banach tangent cones are $2$-uniformly smooth and $p$-uniformly convex, with strictly convex norms.
\end{theorem}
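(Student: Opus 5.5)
We plan to follow the Burago--Gromov--Perelman route, with the classical strainers replaced by the non-symmetric strainers built from the $(\varepsilon,\delta)$-weak quadruple condition. By Theorem~\ref{thm:main_thm_1}, $X$ has constant dimension $n$, the strainer number is $n$, and $(X,\mathsf d,\mathcal H^n)$ is $\mathrm{MCP}(0,n)$. In particular $\mathcal H^n$ is locally doubling and positive on open sets.

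\textbf{Step 1: strained points have full measure.} For $\delta$ small, let $R_\delta$ be the set of $(n,\delta)$-strained points. We first show that $R_\delta$ is open and that its complement is $\mathcal H^n$-null. Openness follows from stability of strainer inequalities under perturbation of the base point. For the null complement, we argue as follows. If $p$ is $(k,\delta)$-strained with $k<n$, then it is not $(k+1,\delta)$-strained. Using the weak quadruple condition to control the comparison angles between a new direction and the existing strainer pairs, we show that the non-extendable points near $p$ are contained in a set that is locally bi-Lipschitz to a subset of $\mathbb R^k$ up to small error. Hence this set has Hausdorff dimension at most $k<n$, and so $\mathcal H^n$-measure zero.

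\textbf{Step 2: rectifiability.} On a small ball around a strained point, the strainer map $x\mapsto(\mathsf d(a_1,x),\dots,\mathsf d(a_n,x))$ is $(1+C\delta)$-bi-Lipschitz onto an open subset of $\mathbb R^n$. This is the non-symmetric strainer coordinate result of \cite{han2025structure}, with the almost-orthogonality estimates supplied by the weak quadruple condition. Covering $R_\delta$ by countably many such balls gives $n$-rectifiability.

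\textbf{Step 3: tangent cones.} Fix a density point of $R_{\delta_j}$ for a sequence $\delta_j\to0$, chosen in the full-measure intersection of these sets. Blow-ups at such a point are $(n,\delta_j)$-strained at every scale, so every tangent cone is $n$-strained with $\delta=0$. This is where we expect the main difficulty. One must show that such a cone is isometric to a normed space $(\mathbb R^n,\|\cdot\|)$, and here the asymmetry of angles prevents using splitting via metric cones. Our first attempt is the following:
\begin{itemize}
\item Busemann concavity passes to the limit, giving that the cone is Busemann concave and that dilations are isometries onto their image.
\item The strainer lines in the cone are geodesic lines, since $\delta\to0$.
\item $S$-concavity together with Busemann concavity along lines forces Busemann functions to be affine; we then invoke a Busemann-type splitting theorem (Kell) $n$ times to get a translation-invariant metric on $\mathbb R^n$ that is also homogeneous, i.e.\ a norm.
\end{itemize}
Uniqueness then follows from Step 2: the $(1+C\delta)$-bi-Lipschitz charts pass to limits, so at a point of differentiability (in the sense of a metric Rademacher/Kirchheim theorem for rectifiable sets) the blow-up is the metric differential, and it does not depend on the subsequence.

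\textbf{Step 4: geometry of the norms and of the cones.} $S$-concavity and $p$-uniform convexity are stable under pointed Gromov--Hausdorff limits and rescaling, since the defining inequalities are scale-invariant. Hence every tangent cone satisfies both, and in particular the Banach tangent cones do. For normed spaces, $S$-concavity is equivalent to $2$-uniform smoothness (Ball--Carlen--Lieb/Ohta), and $p$-uniform convexity implies strict convexity. Finally, under $p\ge S+1$, we show that any tangent cone is uniquely geodesic. Two distinct geodesics with common endpoints would have a midpoint pair violating the combination of the $p$-convexity lower bound and the $S$-concavity upper bound on $\mathsf d(x,m)$. The exponent condition $p\ge S+1$ is exactly what makes these two inequalities incompatible unless the midpoints coincide; this is computed via the limiting uniform-convexity modulus.
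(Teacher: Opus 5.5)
The genuine problems are in Steps~2 and~3.

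\textbf{The chart constant is false.} Your claim that the strainer map is $(1+C\delta)$-bi-Lipschitz onto a Euclidean domain is an Alexandrov-space fact that fails in this class. Suppose it held with $\delta\to 0$ at a.e.\ point, as your Step~3 uses. Blowing up the charts would then make those tangent cones isometric to Euclidean $\mathbb{R}^n$. But for $n\geq 2$ and $p>2$ the space $\ell^n_p$ is $S$-concave and Busemann concave, and its tangent cones are $\ell^n_p$.

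What is actually available (Corollary~\ref{cor:epsilon_openness_strainer_maps} and Proposition~\ref{prop:bi_Lipschitz}) is a bi-Lipschitz homeomorphism with constants depending only on $n$. Injectivity also cannot be quoted from \cite{han2025structure}, whose proof used local semi-convexity. Here it is a new argument, and it requires that no nearby point carries an $(n+1)$-strainer with the parameters of Proposition~\ref{prop:bi_Lipschitz}; that absence comes from self-improvement (Lemma~\ref{lemma:almost_self_improvement}). Such charts suffice for rectifiability, but they determine $T_xX$ only up to bi-Lipschitz equivalence.

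\textbf{The splitting step is unsupported.} Your mechanism for the isometry type of the cone, the iterated splitting, is not an argument. Affineness of Busemann functions is asserted, not derived. Even granting a splitting off one line, it cannot be a metric product here, since the cone may be $\ell^n_p$. So passing from $n$ ideal strainer lines to a translation-invariant, homogeneous metric is exactly the unproved content.

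\textbf{The fix.} No splitting is needed; your closing Kirchheim remark, stated correctly, already does the job. Fix $\delta$ small as in Theorem~\ref{thm:top_manifold_part}, so that $\mathcal{A}(n,\delta,\delta)$ has full $\mathcal{H}^n$-measure and is covered by bi-Lipschitz charts $f\colon W\to\Omega\subset\mathbb{R}^n$ with $W$ open. Apply Kirchheim's metric differentiation \cite{kirchheim1994rectifiable} to the Lipschitz map $f^{-1}\colon\Omega\to W$. At a.e.\ $a\in\Omega$ this gives a norm $\|\cdot\|_a$ with $|f^{-1}(b)f^{-1}(c)|=\|b-c\|_a+o(|b-a|+|c-a|)$. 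Since $W$ is open, the tangent cone at $f^{-1}(a)$ is $(\mathbb{R}^n,\|\cdot\|_a)$, and null sets correspond under $f$.

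This is the direct route mentioned in the remark after Proposition~\ref{prop:rectifiability_Banach_tangent_cone}. The paper's proof is different. It uses charts only to see that balls have positive $\mathcal{H}^n$-measure. It then verifies the hypotheses of the Magnabosco--Mondino--Rossi criterion (Lemma~\ref{thm:MMR_thm}): $\mathrm{MCP}(0,n)$, uniqueness of tangent cones from \cite{kell2019sectional}, and two-sided density bounds. That criterion gives rectifiability and Banach cones at once.

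\textbf{Two further corrections.} In Step~1, a ``new direction'' gives no opposite strainer. As in Lemma~\ref{lemma:full_measure}, the new strainer pair at a nearby bad point $x'$ is $(x,y)$, where $y$ lies on an extension beyond $x'$ of a geodesic from $x$ to $x'$. That extension exists because non-cut points have full measure in $\mathrm{MCP}$ spaces.

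In Step~4, unique geodesicity does not come from $p\geq S+1$ or from an interplay with $S$-concavity. With $C=4/2^p$, inserting $y=m$ for a second midpoint $m$ of $\xi(0),\xi(1)$ gives $|m\xi(1/2)|^p\leq 0$. The same computation shows that approximate midpoints lie near midpoints. That is what lets this inequality pass to tangent cones, and then $S$-concavity along the now unique geodesics. Scale invariance alone does not make conditions quantified over all geodesics stable under Gromov--Hausdorff limits; for example, $\ell^n_p\to\ell^n_\infty$ destroys Busemann concavity.
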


We also obtain finer structural results for finite-dimensional $S$-concave Busemann concave spaces beyond rectifiability.

\begin{theorem}[Theorem~\ref{thm:top_manifold_part}]\label{thm:main_thm_3}
  Let $X$ be a finite-dimensional $S$-concave Busemann concave space with $S\geq 1$.
  Then $X$ contains an open dense topological manifold part of top dimension and full top-dimensional Hausdorff measure.
\end{theorem}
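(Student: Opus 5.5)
The plan is to follow the Burago--Gromov--Perelman route, with the quadruple comparison replaced by the $(\varepsilon,\delta)$-weak quadruple condition, which $S$-concavity supplies for every angle-excess parameter once the straightness parameter is small enough. Let $n$ be the common value of Hausdorff dimension, strainer number and topological dimension given by Theorem~\ref{thm:main_thm_1}. Call a point $x$ \emph{$(n,\delta)$-strained} if it admits a non-symmetric $(n,\delta)$-strainer $\{(a_i,b_i)\}_{i=1}^n$ in the sense of the asymmetric strainer framework, and let $R_\delta\subset X$ be the set of such points. The candidate manifold part is $R_\delta$ for $\delta$ small, depending on $n$ and $S$.

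\textbf{Step 1: openness.} The defining conditions on comparison angles are strict inequalities between continuous functions of finitely many distances. Hence if $\{(a_i,b_i)\}$ strains $x$, it also strains every point $y$ close to $x$, possibly at a slightly worse parameter; adjusting $\delta$ absorbs this. So $R_\delta$ is open.

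\textbf{Step 2: full measure and density.} By Theorem~\ref{thm:main_thm_2}, $\mathcal H^n$-a.e.\ point has a unique tangent cone isometric to an $n$-dimensional Banach space. At such a point, pick an Auerbach-type basis $\pm e_1,\dots,\pm e_n$ in the cone. Realize approximations of these directions by points $a_i,b_i$ at a small scale $r$, using Gromov--Hausdorff closeness of the rescaled spaces. The almost-orthogonality conditions hold in the normed space up to controlled error, so they transfer. Then $R_\delta$ has full $\mathcal H^n$-measure. Since $\mathcal H^n$ is nontrivial and $(X,\mathsf d,\mathcal H^n)$ satisfies $\mathrm{MCP}(0,n)$, every ball has positive measure, so the full-measure set $R_\delta$ is dense.

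\textbf{Step 3: manifold charts.} Near an $(n,\delta)$-strained point, consider the strainer map $f=(\mathsf d(a_1,\cdot),\dots,\mathsf d(a_n,\cdot))$. I would show that $f$ is a bi-Lipschitz homeomorphism from a neighborhood onto an open subset of $\mathbb R^n$. The Lipschitz upper bound is trivial. For the lower bound, given $y,z$ close to $x$, take a geodesic from $y$ to $z$ and compare its direction with the strainer directions. One comparison angle is controlled by the law of cosines, and the weak quadruple condition then propagates control to the remaining angles in the configuration. This shows that some coordinate $|\mathsf d(a_i,y)-\mathsf d(a_i,z)|$ is at least $c\,\mathsf d(y,z)$, where $c=c(n,S)>0$; this gives injectivity and the lower Lipschitz bound. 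For openness of the image (surjectivity onto a neighborhood), I would use invariance of domain together with injectivity on compact pieces. The topological dimension is $n$, and a continuous injective map of a neighborhood of an $n$-dimensional space into $\mathbb R^n$ need not be open in general, so here the bi-Lipschitz embedding has to be combined with a local degree or iterated-construction argument. Alternatively, one can prove surjectivity directly by moving along $\pm$ gradient-like curves toward $a_i$ or $b_i$, adjusting one coordinate while the others stay nearly fixed. This again uses the weak quadruple estimate.

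\textbf{Main obstacle.} I expect Step 3's surjectivity and the non-symmetry of angles to be the hard part. In Alexandrov spaces, $b_i$ gives an opposite direction via the symmetric angle identity. Here the angles from $a_i$ and from $b_i$ must be handled separately, with both orders of the quasi-metric on directions. I would first try the direct constructive approach: move a point toward $a_i$ or $b_i$ along geodesics, using Busemann concavity to control how the other distance coordinates change. An iterative Newton-type scheme should then converge to a preimage of any target sufficiently close to $f(x)$.
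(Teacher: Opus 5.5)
Your Step~3 has a genuine gap at injectivity. The argument you sketch uses only the $n$-strainer configuration and the weak quadruple condition. It never uses that $n$ is the top dimension, so it would equally ``prove'' that an $(n,\delta,\delta)$-strainer map on $\mathbb{R}^{n+1}$ is injective, which is false.

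What the law of cosines plus the weak quadruple condition actually give is a dichotomy; this is Proposition~\ref{prop:bi_Lipschitz}. Suppose $f(y)=f(z)$, or merely $|f(y)-f(z)|\le c|yz|$ with $c$ small. Take a point $w$ on the geodesic from $y$ to $z$ very close to $y$. Then $(a_1,\dots,a_n,z)$, with opposite strainer $(b_1,\dots,b_n,y)$, is an $(n+1,\delta,\max\{\alpha,5\delta+2\tilde\varepsilon\})$-strainer at $w$.

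To conclude, you must know that no such strained point exists near $x$, and the strainer number alone does not give this. Strainer number $n$ only guarantees a neighborhood of $x$ free of $(n+1,\bar\delta,\bar\delta)$-strained points, for some unknown and possibly much smaller $\bar\delta$. The strainer you construct has the degraded orthogonality parameter $5\delta+2\tilde\varepsilon$. Closing this gap is exactly the job of the self-improvement Lemma~\ref{lemma:almost_self_improvement}, the strengthened dequeuing-and-enqueuing with weak quadruple conditions of varying angle-excess parameter. For $12\delta+5\varepsilon\le\delta_{n+1}$ it shows, as in Lemma~\ref{lemma:strainer_number_dimension}, that a neighborhood of $x$ contains no $(n+1,\delta,5\delta+2\varepsilon)$-strained point. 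This is also where the smallness of $\delta$ in the manifold statement really comes from; your proposal has no counterpart of this step.

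Surjectivity is left open, and the scheme you suggest fails as stated because almost orthogonality here is one-sided. Proposition~\ref{prop:almost_orthogonality} controls $\angle a_iw\eta_j$ only for $i<j$. So moving toward $a_j$ or $b_j$ keeps $\mathsf d_{a_i}$ nearly fixed for $i<j$ but says nothing for $i>j$, and Busemann concavity does not supply this control.

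The paper's fix (Proposition~\ref{prop:epsilon_openness_strainer_maps}) is an induction on $k$. It uses the anisotropic norm $\|v_{[k-1]}\|_1+\frac{\varepsilon_{k-1}}{2}|v_k|$ together with the descent criterion for $\varepsilon$-open maps. Correcting the first $k-1$ coordinates may change $f_k$ by up to $|xy|$, which the small weight absorbs. Moving toward $p_k$ or $q_k$ changes the earlier coordinates by at most $(2\delta+\alpha+\tilde\varepsilon)|xy|$. Injectivity plus $\varepsilon$-openness then gives a bi-Lipschitz homeomorphism onto an open set directly, with no invariance of domain and no separate lower Lipschitz estimate.

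Your Step~2, by contrast, is a legitimate alternative to the paper's route. The paper gets density straight from the strainer number and full measure from the inductive Lemma~\ref{lemma:full_measure} with non-cut points. Your route works because a.e.\ Banach tangent cones are obtained independently via Magnabosco--Mondino--Rossi. However, you must place $a_1,b_1,\dots,a_n,b_n$ at hierarchically separated scales $R_1>R_1'>R_2>\cdots$ with fixed small ratios, as the $\delta_S$-conditions in the strainer definition demand. At a single scale the comparison angle between Auerbach vectors can be near $\pi/3$; the standard basis of $\ell^2_p$ with $p$ large is an example. With $R_j\ll R_i$ instead, $\|R_ie_i\pm R_je_j\|\ge e_i^*(R_ie_i\pm R_je_j)=R_i$. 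So the comparison angle at $0$ between $R_ie_i$ and $\pm R_je_j$ has cosine at most $R_j/(2R_i)$, and the same holds for $-R_i'e_i$. That is the bound you need.
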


The next theorem provides Hausdorff dimension estimates for singular strata and yields a natural measure-theoretic stratification for finite-dimensional $S$-concave Busemann concave spaces.
\begin{theorem}[Theorem~\ref{thm:singular_set_dimension} and Corollary~\ref{cor:measure_stratification}]\label{thm:main_thm_4}
  Let $X$ be an $n$-dimensional $S$-concave Busemann concave space with $S\geq 1$.
  Then, for any $0<\delta<1/2$, the lowest-dimensional singular set $X\setminus \mathcal{A}(1,\delta)$ is discrete.
  Furthermore, for each $k=1,\ldots,n$, the Hausdorff dimension of the singular set $X\setminus \mathcal{A}(k,\delta,\delta)$ is at most $k-1$.
  In particular, $X$ admits a measure-theoretic stratification $\{X_k\}_{k=0}^n$ such that $X$ is the disjoint union of the strata $X_0,\ldots,X_n$, with $\dim_{H}(X_k)\leq k$ for each $k=0,\ldots,n$.
  Moreover, the top-dimensional stratum $X_n$ is an open dense topological $n$-manifold, while the lowest-dimensional stratum $X_0$ is discrete.
\end{theorem}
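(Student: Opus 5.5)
Theorem~\ref{thm:main_thm_4} follows the Burago--Gromov--Perelman scheme, with the weak quadruple condition replacing Alexandrov's quadruple comparison in every strainer estimate.

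\textbf{Step 1 (lowest stratum is discrete).} Let $p\notin\mathcal{A}(1,\delta)$ and suppose, for contradiction, that $p$ is an accumulation point of such points. Take $q_i\to p$ with $q_i\ne p$ and $q_i\notin\mathcal{A}(1,\delta)$. Busemann concavity gives monotonicity of $t\mapsto \mathsf d(\gamma(t),\eta(t))/t$. Using it, I would extend the short geodesic $pq_i$ backwards, at the scale $\mathsf d(p,q_i)$, to obtain an approximate opposite point of $p$ seen from $q_i$. The weak quadruple condition, applied with angle-excess $\varepsilon$ small compared to $1/2-\delta$, should then upgrade the pair $(p,\cdot)$ into a $(1,\delta)$-strainer at $q_i$. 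This contradicts $q_i\notin\mathcal{A}(1,\delta)$. The asymmetry of angles is the reason the bound $\delta<1/2$ is needed: it leaves room to absorb the excess in both directions.

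\textbf{Step 2 (dimension of singular sets).} For each $k=1,\dots,n$, I would show $\dim_H(X\setminus\mathcal{A}(k,\delta,\delta))\le k-1$ by induction on $k$, with Step 1 as the base case.
\begin{enumerate}
\item Fix $p\in\mathcal{A}(k-1,\delta,\delta)\setminus\mathcal{A}(k,\delta,\delta)$ and a $(k-1)$-strainer $\{(a_i,b_i)\}$ at $p$ of size $r$. The non-symmetric strainer coordinates from \cite{han2025structure}, $f=(\mathsf d(a_1,\cdot),\dots,\mathsf d(a_{k-1},\cdot))$, are bi-Lipschitz on a neighbourhood $U$ of $p$ restricted to the singular set.
\item Intersect the singular set with a fibre $f^{-1}(y)\cap U$. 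I claim this intersection is discrete. Otherwise, two nearby singular points $x,x'$ in the same fibre would give a direction almost orthogonal to every $a_i,b_i$. The weak quadruple condition, applied to quadruples $(x;a_i,b_i,x')$ with hierarchical scales $\mathsf d(x,x')\ll r$, controls all the remaining comparison angles. So $(x',\text{extension of }x'x)$ would extend the strainer to a $k$-strainer, a contradiction.
\item Discrete fibres over a $(k-1)$-dimensional image, together with the Lipschitz bound on $f$, give Hausdorff dimension at most $k-1$ via the standard coarea-type/Eilenberg counting. More precisely, I would run a covering argument: in each ball of radius $\rho\ll r$, singular points are $\rho$-separated along fibres, which bounds the number of $\rho$-balls needed by $C\rho^{-(k-1)}$.
\item Countable unions over rational strainer data then conclude.
\end{enumerate}

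\textbf{Step 3 (stratification).} Set
\[
X_n=\mathcal{A}(n,\delta,\delta)\cap(\text{manifold part}),\qquad X_k=\mathcal{A}(k,\delta,\delta)\setminus\mathcal{A}(k+1,\delta,\delta),
\]
with the convention $\mathcal{A}(0)=X$. Then Step 2 gives $\dim_H X_k\le k$. By Theorem~\ref{thm:main_thm_3}, the top-dimensional points have strainer neighbourhoods homeomorphic to balls, so $X_n$ is an open dense topological $n$-manifold; openness holds since being an $(n,\delta)$-strained point is an open condition. By Step 1, $X_0$ is discrete.

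\textbf{Main obstacle.} The hard part is Step 2's fibre-discreteness. We must manufacture the new strainer pair $(x',\tilde x)$ and verify almost-orthogonality to all previous pairs using only one-sided angle control. I would first try to bound one comparison angle by the Euclidean law of cosines, using that $f(x)=f(x')$. I would then propagate the bound to the remaining angles with the weak quadruple condition, choosing the straightness parameter small enough for the given $\delta$.
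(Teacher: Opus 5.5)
\textbf{There is a genuine gap: the mechanism you use to produce the new strainer pair is not available.} Both Step~1 and the fibre-discreteness claim in Step~2 get the new pair by \emph{extending} a short geodesic beyond an endpoint (``extend the short geodesic $pq_i$'', ``extension of $x'x$''). $S$-concave Busemann concave spaces are non-branching but not geodesically extendable; convex bodies in $\ell^n_p$ have boundary. Neither Busemann concavity nor the weak quadruple condition supplies the missing far point. Almost-everywhere extendability, as used in Lemma~\ref{lemma:full_measure}, does not help either, because singular sets are $\mathcal{H}^n$-null.

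In fact no argument using only a pair of nearby singular points in a common fibre can work. Take the slab $\mathbb{R}^2\times[0,\epsilon]\subset\mathbb{R}^3$ with $x=(a,\epsilon)$ and $x'=(a,0)$.
\begin{itemize}
\item Both points lie in $\mathcal{A}(2,\delta,\delta)\setminus\mathcal{A}(3,\delta,\delta)$.
\item They lie in the same fibre of a horizontal strainer map whose strainers sit at height $\epsilon/2$, and $|xx'|=\epsilon\ll r$.
\item The segment $[xx']$ cannot be extended at either end.
\item The argument of Proposition~\ref{prop:bi_Lipschitz} only yields a $3$-strained point in the interior of $[xx']$, and that point is not singular.
\end{itemize}
For the same reason your claim that $f$ is bi-Lipschitz on the singular set is unsupported. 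Proposition~\ref{prop:bi_Lipschitz} needs an \emph{open} set free of higher strainers.

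The missing idea is the paper's three-point pigeonhole. From many separated singular points:
\begin{itemize}
\item doubling (Lemma~\ref{lemma:number_points}) extracts $x_0,\dots,x_{M-1}$ with $|x_{i+1}x_0|\geq L_0|x_ix_0|$;
\item uniform compactness of $(\Sigma^l_{x_0}X,\angle_{x_0})$ (Lemma~\ref{lem:uniform_compactness_space_of_directions_common_length}) gives indices $i<j$ whose common-scale angle at $x_0$ is below $\beta(\delta/2)$;
\item Lemma~\ref{lemma:quantitive_discrepancy_angles} then makes the \emph{farther singular point} $x_j$ a $(1,\delta)$-strainer at $x_i$, with opposite strainer $x_0$.
\end{itemize}
The third singular point replaces the geodesic extension. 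For $k=1$ this already shows that $X\setminus\mathcal{A}(1,\delta)$ has at most $K(N,\delta,S)$ points. Your sequence $q_i\to p$ can be repaired the same way, with $p$ playing the role of $x_0$.

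\textbf{Discrete fibres would not bound the dimension anyway.} Discreteness gives no separation that is uniform in the scale. A $1$-Lipschitz projection can have one-point fibres on a set of larger dimension: project the graph of a Weierstrass-type function to the first axis. Eilenberg's inequality bounds the fibre-count integral by $\mathcal{H}^{k-1}(A)$, not conversely. Your ``$\rho$-separated along fibres'' covering needs control in thickened fibres, not exact ones.

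This is exactly what Lemma~\ref{lemma:singular_set_estimate} provides. In an $r$-cylinder $D_x(r,m)$ one has $\bigl|\,|p_lx_i|-|p_lx_j|\,\bigr|\leq 0.1r$ while $|x_ix_j|\geq\delta^{-1}r$. So the law of cosines gives $|\tilde{\angle}p_lx_ix_j-\pi/2|<\delta$. The weak quadruple condition applied to $(x_j;p_l,q_l,x_i)$ then controls the remaining angles. Combined with the pigeonhole, each cylinder contains at most $K(N,\delta,S)$ points that are $\delta^{-1}r$-separated and not in $\mathcal{A}(k+1,\delta,\max\{\alpha,3\delta+\tilde{\varepsilon}\};r)$. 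The $\lesssim r^{-k}$ nonempty cylinders then bound the rough dimension.

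\textbf{Parameter bookkeeping.} The weak-quadruple step degrades orthogonality to $3\delta+\tilde{\varepsilon}$, so the induction cannot be closed on $\mathcal{A}(k,\delta,\delta)$ at a fixed $\delta$. The paper runs it on $\mathcal{A}(k,\delta,3\delta+\varepsilon)$ with $\varepsilon$ small and $\delta\leq\tilde{\delta}(\varepsilon,S)$, and recovers $\mathcal{A}(k,\delta',\delta')$ by inclusion.

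\textbf{Step 3 is fine} once $\delta$ is small enough that $\mathcal{A}(n,\delta,\delta)$ is itself a manifold (Theorem~\ref{thm:top_manifold_part}). Intersecting with a separate ``manifold part'' is then unnecessary, and otherwise it would leave points outside the partition.
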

\begin{remark}
  It remains unclear whether finite-dimensional $S$-concave Busemann concave spaces admit structural results as strong as those known for finite-dimensional Alexandrov spaces with curvature bounded below.
  In the Alexandrov setting, one has a canonical \emph{topological} stratification, and the set of interior singular points has Hausdorff codimension at least 2. 
  We refer to Section~\ref{sect:problems} for further discussion.
\end{remark}
\smallskip

\subsection{Ideas and new ingredients}\label{subsect:ideas}

\noindent
The proofs are guided around the following principle: one should not try to reconstruct the full angle theory of Alexandrov geometry, since several basic mechanisms of that theory fail in the present setting. 
Instead, we isolate the angle estimates needed by the strainer method and show that it can be recovered from $S$-concavity and Busemann concavity. 

There are two basic notions of angle used throughout the paper: \emph{angles viewed from a fixed point} and \emph{angles of fixed scale}.
Several related angle-like quantities are then built from these two notions.
Each captures a different aspect of the geometry and plays a different role, since, unlike in Alexandrov geometry, there is no single canonical angle notion with all the usual properties; see Section~\ref{subsect:notions_of_angles} for details.
For the reader's convenience, we summarize them in the following table:
\begin{table}[ht]
    \centering
    \small
    \begin{tabular}{|c|c|}
        \hline
        Notation & Meaning\\
        \hline
        $\angle px\xi$ &
        angle at $x$ viewed from $p$ along the geodesic $\xi$;
        Def.~\ref{def:angle_viewed_from_fixed_point}\\
        \hline
        $\angle_x(\gamma(t),\eta(s))$ &
        angle of fixed scale $(t,s)$ between the geodesics $\gamma$ and $\eta$;
        Def.~\ref{def:angles_of_fixed_scales}\\
        \hline
        $\tilde{\angle}_x(p,q)$ or $\tilde{\angle}pxq$ &
        Euclidean comparison angle at $x$ in the comparison triangle
        $\tilde{\Delta}pxq$\\
        \hline
        $\angle_x((\gamma,l),(\eta,l))$ &
        quasi-metric on the space of directions with common length;
        Def.~\ref{def:space_of_directions_common_length}\\
        \hline
        $\angle_S(\gamma,\eta)$ &
        $S$-upper angle from $\gamma$ to $\eta$; Def.~\ref{def:S_upper_angles}\\
        \hline
    \end{tabular}
    \caption{Different notions of angle}\label{tab:angle}
\end{table}

The first new ingredient is the \emph{$(\varepsilon,\delta)$-weak quadruple condition}, introduced in Definition~\ref{def:weak_quadruple_condition}.
Its parameters are designed to provide the degrees of freedom needed to accommodate anisotropic non-Riemannian behavior, while matching the configurations produced by strainer arguments.
More precisely, $\delta$ controls the straightness and scale of the hierarchy of the quadruple configurations tested by the condition, whereas $\varepsilon$ records the allowable angle excess over the Alexandrov bound $2\pi$.
Thus, the condition does not require full quadruple comparison, but only the quantitative comparison needed in the strainer theory.
This weakening is essential, since full quadruple comparison would exclude the Finslerian examples that motivate the paper.

The verification of the weak quadruple condition is one of the core technical points of the paper.
Our key insight, Proposition \ref{lemma:verification_WQC_II}, shows that $S$-concavity itself implies the weak quadruple condition with an arbitrary prescribed angle-excess parameter, provided the tested quadruples are sufficiently close to the nearly-collinear, hierarchically arranged configurations with small scales.
More precisely, given any angle-excess parameter $\varepsilon>0$, every $S$-concave space satisfies the $(\varepsilon,\delta)$-weak quadruple condition for sufficiently small $\delta>0$ depending only on $\varepsilon$ and $S$. 
The proof uses the monotonicity encoded by $S$-concavity to obtain a relaxed triangle inequality for angles viewed from a point, in which these angles are bounded by the corresponding weak upper angles, called \emph{$S$-upper angles}, up to an error term controlled by $S$ and $\delta$. 
This estimate is then converted into one for Euclidean comparison angles.
This is the only place where the comparison principle is derived from the curvature assumption; afterwards, it is used as a replacement for Alexandrov quadruple comparison.

The second new ingredient is the strengthened strainer formalism.
As in our previous work \cite{han2025structure}, strainers are defined through an inductive procedure with additional distance control, in order to handle the asymmetry and lack of monotonicity of angles viewed from a point.
To make this formalism compatible with the weak quadruple condition, we split the single parameter of strainers into two separate ones, controlling straightness and almost orthogonality, respectively.
This separation provides the flexibility needed in the self-improvement procedure.
We also impose an additional angle requirement between strainer pairs.
Together with the weak quadruple condition, this requirement makes it possible
to recover all almost orthogonality estimates between strainer pairs successively from the estimate of one comparison angle.
This mechanism is used repeatedly in the technical core of the paper; see, for example, Proposition~\ref{prop:bi_Lipschitz}.


The third new ingredient is a strengthened dequeuing-and-enqueuing argument for the self-improvement of strainers, which improved strainers from less orthogonal ones.
In the Alexandrov setting, the straightening strategy can be applied directly to strainers, thanks to the well-definedness and symmetry of angles.
This strategy, however, is not applicable in the Busemann setting because of the asymmetric nature of strainers; in \cite{han2025structure}, this difficulty was overcome by a dequeuing-and-enqueuing procedure.
In the present setting, an additional difficulty arises: the weak quadruple condition can improve a given strainer only up to the chosen angle-excess parameter.
We therefore combine the strengthened strainer formalism above with weak quadruple conditions of varying parameters. 
This produces improved strainer pairs while keeping the entire strainer under control during the iteration.
This yields Lemma~\ref{lemma:almost_self_improvement}, which is the key ingredient for the dimension result and the measure contraction property.


Once these steps are established, the remaining structural results follow strategies similar to those developed in \cite{han2025structure}; see \cite[Section 1.3]{han2025structure} for a detailed explanation.
The principal logical dependencies are:
\medskip
\begin{center}
\begin{tikzpicture}[
  node distance=1.15cm and 2.2cm,
  box/.style={align=center, inner sep=1pt, font=\small},
  arr/.style={-{Latex[length=1.6mm]}, semithick}
]
\node[box] (A) {
  Prop.~\ref{lemma:verification_WQC_II}\\
  $S$-concavity implies WQC
};

\node[box, below=of A] (B) {
  Prop.~\ref{prop:almost_orthogonality}\\
  almost orthogonality of strainer maps
};

\node[box, below=of B] (C) {
  Props.~\ref{prop:epsilon_openness_strainer_maps} and \ref{prop:bi_Lipschitz}\\
  $\varepsilon$-open and bi-Lipschitz charts
};

\node[box, right=of C] (D) {
  Lemma~\ref{lemma:almost_self_improvement}\\
  self-improvement of strainers
};

\node[box, above=of D] (E) {
  Prop.~\ref{prop:MCP}\\
  dimension, Hausdorff measure, $\mathrm{MCP}$
};

\node[box, above=of E] (F) {
  Prop.~\ref{prop:rectifiability_Banach_tangent_cone},
  Thms.~\ref{thm:top_manifold_part} and \ref{thm:singular_set_dimension}\\
  rectifiability, manifold part, singular strata
};

\draw[arr, shorten >=3pt, shorten <=3pt] (A) -- (B);
\draw[arr, shorten >=3pt, shorten <=3pt] (B) -- (C);
\draw[arr, shorten >=3pt, shorten <=3pt] (C) -- (D);
\draw[arr, shorten >=3pt, shorten <=3pt] (D) -- (E);
\draw[arr, shorten >=3pt, shorten <=3pt] (E) -- (F);

\draw[arr, shorten >=3pt, shorten <=3pt] (A) -- (F);
\draw[arr, shorten >=6pt, shorten <=6pt] (A.south east) to (D.north west);
\end{tikzpicture}
\end{center}
This diagram records only the main dependencies.
Its purpose is to indicate where the weak quadruple condition enters and how it is used in the strainer theory and in the global structure results.

\medskip

\noindent
\textbf{Organization}. 
Section~\ref{sect:preliminaries} recalls the basic terminology, notation, and results from \cite{han2025structure} used in the sequel.
Section~\ref{sect:weak_quadruple_condition} introduces the weak quadruple condition and establishes its compatibility with $S$-concavity.
Section~\ref{sect:strainers} develops the theory of strainers and strainer maps, including almost orthogonality, $\varepsilon$-openness, bi-Lipschitz property, and self-improvement.
Section~\ref{sect:geometric_measure_structure} introduces strainer numbers and proves Theorems~\ref{thm:main_thm_1} and~\ref{thm:main_thm_2}.
Section~\ref{sect:manifold_singular_structures} proves the results on manifold structure and singular strata, namely Theorems~\ref{thm:main_thm_3} and~\ref{thm:main_thm_4}.
Section~\ref{sect:problems} discusses open problems and further directions.

\section{Preliminaries}\label{sect:preliminaries}
\noindent
In this section, we briefly recall the basic terminology, notation, and results used throughout the manuscript.
For further background on metric geometry, we refer to \cite{burago2001course,alexander2024alexandrov}; for more details on $S$-concave and Busemann concave spaces, see \cite{ohta2021comparison,kell2019sectional}.
Our conventions are consistent with those in our previous work \cite{han2025structure}.

\medskip

\subsection{Basics from metric geometry}\label{subsect:basics}
\noindent
We denote by $(X,\mathsf d)$ a complete, separable metric space with the distance function $\mathsf d$.
Given two points $x, y \in X$, we denote by $|xy|$ or $|x,y|$ the distance $\mathsf d(x,y)$.
For $r > 0$, we denote by $B(x,r)$ and $\bar{B}(x,r)$ the open and closed balls centered at $x$ with radius $r$, respectively.

Given a subset $E \subset X$ and $r > 0$, we say that $E$ is \emph{$r$-separated} if every pair of distinct points $x, y \in E$ satisfies $|xy| \geq r$.
An $r$-separated subset $E$ is called \emph{maximal} if there does not exist any $r$-separated subset $E' \subset X$ that properly contains $E$.
For a set $E$, we denote by $\beta_E(r) \in \mathbb{N} \cup \{\infty\}$ the largest possible cardinality of a maximal $r$-separated subset of $E$.
A metric space $(X, \mathsf d)$ is said to be \emph{doubling}, or \emph{$N$-doubling}, if for any $r > 0$, the cardinality of any maximal $r/2$-separated subset of any ball $B(x, r)$ is at most $N$, i.e., $\beta_{B(x, r)}(r/2) \leq N$ for all $x \in X$ and $r > 0$.
Equivalently, any ball $B(x, r) \subset X$ can be covered by at most $N$ balls of radius $r/2$.

For a curve $\gamma \subset X$, we denote its length by $l(\gamma)$.
A curve $\gamma:[0,1] \to X$ is called a \emph{constant-speed geodesic} from $x$ to $y$ if it is a length-minimizing curve connecting $x$ and $y$ satisfying that $|\gamma(t)\gamma(s)| = |t-s|\,|xy|$ for all $t, s \in [0,1]$.
A length-minimizing curve is called a \emph{unit-speed geodesic} or \emph{geodesic} for short, if it is parametrized by arc-length.
A metric space $(X,\mathsf d)$ is said to be \emph{geodesic} if any pair of distinct points can be connected by a geodesic.
A geodesic space $(X,\mathsf d)$ is said to be non-branching if for any pair of constant-speed geodesics $\gamma,\eta:[0,1]\to X$, the condition $\gamma|_{[0,t]}=\eta|_{[0,t]}$ for some $t\in (0,1)$ implies that $\gamma=\eta$.
A function $f:X\to \mathbb{R}$ is called convex if $f\circ \gamma$ is convex on $[0,1]$ for any constant-speed geodesic $\gamma$.
A map $F:X\to Y$ between two metric spaces $(X,\mathsf d_X)$ and $(Y,\mathsf d_Y)$ is called \emph{Lipschitz} or \emph{$L$-Lipschitz} if there is $L>0$ such that $|F(x)F(y)|\leq L|xy|$ for any $x,y\in X$\footnote{For different metric spaces, we use the same notation $|xy|$ and $|F(x)F(y)|$ to denote the distances $\mathsf d_X(x,y)$ and $\mathsf d_Y(F(x),F(y))$, respectively, if there is no ambiguity.}.
It is called \emph{$L$-bi-Lipschitz} for some $L\geq 1$ if $|xy|/L\leq |F(x)F(y)|\leq L|xy|$ for any $x,y\in X$.

For a geodesic space $(X,\mathsf d)$, we denote by $\Delta xyz$ a \emph{geodesic triangle} in $X$ with vertices $x,y,z\in X$, given by the union of three geodesics joining the vertices pairwise.
A triangle $\tilde{\Delta}xyz \subset \mathbb{R}^2$ is said to be a Euclidean comparison triangle of $\Delta xyz\subset X$ if $|xy|=|\tilde{x}\tilde{y}|$, $|yz|=|\tilde{y}\tilde{z}|$ and $|xz|=|\tilde{x}\tilde{z}|$.
We denote the angle at vertex $\tilde{x}$ of the Euclidean comparison triangle $\tilde{\Delta}xyz$ by $\tilde{\angle}_x(y,z)$, or $\tilde{\angle}yxz$ for simplicity.

Given $(X,\mathsf d)$ and $\alpha\in [0,\infty)$, we denote by $\mathcal{H}^{\alpha}$ the $\alpha$-dimensional Hausdorff measure on $X$.
For a subset $E\subset X$, we denote by $\dim_{H}(E), \dim_{r}(E), \dim_{T}(E)$ the {\em Hausdorff dimension}, {\em rough dimension}, and {\em topological dimension} of $E$, respectively.
Here the rough dimension $\dim_{r}(E)$ is defined by
\begin{equation*}
  \dim_{r}(E):=
  \inf\left\{\alpha>0:\limsup_{r\searrow 0}r^{\alpha}\beta_E(r)=0\right\},
\end{equation*}
with value $\infty$ if no such $\alpha$ exists.
It is known that these notions of dimension satisfy the following inequalities:
\begin{equation}\label{eq:relation_different_dims}
  \dim_{T}(E)\leq \dim_{H}(E) \leq \dim_{r}(E),\quad \text{for any }E\subset X.
\end{equation}
See \cite[Theorem 8.14]{heinonen2001lectures} for the first inequality and \cite[Section 10.6.4]{burago2001course} for the second inequality.

Given $n\in \mathbb{N}$, a metric space $X$ is said to be \emph{$n$-rectifiable} if there exist countably many subsets $E_i\subset \mathbb{R}^n$ and Lipschitz maps $f_i:E_i\to X$ such that $\mathcal{H}^n(X\setminus \cup_i f_i(E_i))=0$; see \cite{bate2017characterizations,kirchheim1994rectifiable,ambrosio2000rectifiable}.

For $\lambda\in (0,1]$ and a base point $x\in X$, we call the rescaled pointed space $(X,\mathsf d/\lambda,x)$ a \emph{blow-up} of $(X,\mathsf d,x)$ at $x$. 
We denote by $\mathrm{Tan}(X,\mathsf d,x)$ the collection of all pointed Gromov--Hausdorff limits of sequences of blow-ups $\{(X,\mathsf d/\lambda_i,x)\}_i$, where $\{\lambda_i\}_i\subset (0,1]$ converges to $0$, and call each element of $\mathrm{Tan}(X,\mathsf d,x)$ a \emph{Gromov--Hausdorff tangent cone} of $X$ at $x$.

\subsection{S-concavity and Busemann concavity}\label{sect:S_and_Busemann_concavity}

\noindent
In this subsection, we briefly recall the definitions of $S$-concavity and Busemann concavity.
Both notions can be viewed as generalizations of non-negative sectional curvature in Finsler geometry.
For more details and examples, we refer to \cite{ohta2021comparison,ohta2009uniform,ohta2007convexities,kell2019sectional} and references therein.

\begin{definition}[S-concave spaces]\label{def:S_concave}
  We say that a complete geodesic space $(X,\mathsf d)$ is \emph{$S$-concave} for some $S\geq 1$ if for any point $p\in X$ and any constant-speed geodesic $\gamma:[0,1]\to X$, it holds that
  \begin{equation}
    \left|p\gamma(t)\right|^2
    \geq
    (1-t)\left|p\gamma(0)\right|^2 + t\left|p\gamma(1)\right|^2 - S t (1-t)\left|\gamma(0)\gamma(1)\right|^2,\quad \text{for any }t\in [0,1].
  \end{equation}
  We call $S$ the \emph{uniform smoothness constant} of $X$.
\end{definition}
Observe that no geodesic space, except a singleton, can be $S$-concave when $S<1$ (see \cite[Ex. 8.12]{ohta2021comparison}).
Typical examples of $S$-concave spaces include $\ell_p^n, \ell_p$, and $L_p$ spaces for $p\in [2,\infty)$ (with $S=p-1$), $\mathrm{CBB}(0)$ spaces (with $S=1$), complete Berwald spaces with non-negative flag curvature whose uniform smoothness constants are bounded above by $S$ (see \cite[Definition 8.14 and Corollary 8.20]{ohta2021comparison}), and their product spaces.

\begin{definition}[Busemann concave spaces]\label{def:Busemann_concave}
  We say that a complete geodesic space $(X,\mathsf d)$ is \emph{Busemann concave} if for any pair of constant-speed geodesics $\gamma,\eta:[0,1]\to X$ starting from a common point, the function
  \begin{equation}\label{eq:Busemann_concave}
    t\mapsto \frac{\left|\gamma(t)\eta(t)\right|}{t}
  \end{equation}
  is non-increasing on $(0,1]$.
\end{definition}
Note that the Busemann concavity \eqref{eq:Busemann_concave}, which can be defined via comparison triangles similarly to Alexandrov spaces, is a strictly weaker condition than the triangle comparison property required for Alexandrov spaces with non-negative curvature; see \cite[Remark 3.5]{han2025structure}.
It follows directly from the definition that Busemann concave spaces are non-branching.
Typical examples of Busemann concave spaces include strictly convex Banach spaces, Alexandrov spaces with non-negative curvature, and product spaces of these spaces.

We remark that, while one can prove that connected complete Berwald spaces with flag curvature bounded below satisfy Busemann concavity locally up to the conjugate radius, it is unknown whether all such spaces with non-negative flag curvature satisfy Busemann concavity globally.
In contrast, all simply connected complete Berwald spaces with absolutely homogeneous Finsler metric and non-positive flag curvature indeed satisfy global Busemann convexity; see \cite[Remark 10]{kozma2004dispersing}.
We refer to \cite{kell2019sectional,kell2015Berwald} and \cite[Remark 3.7]{han2025structure} for further discussion.

\subsection{Notions of angle}\label{subsect:notions_of_angles}

\noindent
In this subsection, we recall two notions of angle introduced in \cite{han2025structure}: \emph{angles viewed from a fixed point} and \emph{angles of fixed scale}, defined on $S$-concave spaces and Busemann concave spaces, respectively.
Although these notions do not coincide in general, they capture complementary geometric information in $S$-concave Busemann concave spaces and are central to the strainer theory developed below. 

We first recall the notion of angle viewed from a fixed point, which is used to measure the orthogonality of geodesics. 
\begin{definition}[Angles viewed from a fixed point]\label{def:angle_viewed_from_fixed_point}
  Let $(X,\mathsf d)$ be an $S$-concave space with $S\geq 1$ and $p\in X$ be a point.
  Let $\gamma:[0,l]\to X$ be a unit-speed geodesic starting from $x\in X$ distinct from $p$.
  Then the angle $\angle px\gamma$, referred to as the \emph{angle viewed from $p$ at $x$ along $\gamma$}, is defined as 
  \begin{equation}\label{eq:angle_viewed_from_fixed_point}
    \angle px\gamma:=\lim_{t\searrow 0}\tilde{\angle}px\gamma(t),
  \end{equation}
  where $\tilde{\angle}px\gamma(t)$ is the angle of the Euclidean comparison triangle $\tilde{\Delta}px\gamma(t)$ at $\tilde{x}$.
\end{definition}

Although the comparison angles $t\mapsto \tilde{\angle}px\gamma(t)$ need not be monotone, it was shown in \cite{han2025structure} that the angle $\angle px\gamma$ is well-defined on $S$-concave spaces.
In the following, we collect several properties of angles viewed from a fixed point that will be used repeatedly in the subsequent sections.
For detailed proofs, we refer to \cite[Section 4.1]{han2025structure}.

\begin{lemma}[\cite{han2025structure}]\label{lem:angle_viewed_from_fixed_point}
  Let $(X,\mathsf{d})$ be an $S$-concave space with $S\geq 1$, and let $p\in X$ be a point and $\gamma$ be a unit-speed geodesic starting from a point $x\in X$ distinct from $p$.
  \begin{enumerate}[fullwidth, label=(\roman*)]
    \item \textbf{Almost comparison inequality.}\label{item:almost_comparison} 
    The angle $\angle px\gamma$ satisfies the following almost comparison inequality:
    \begin{equation}
      \tilde{\angle}px\gamma(t)
      \leq
      \angle px\gamma + \delta_S(t;|px|),\quad \text{for any }t\in (0,t_0],
    \end{equation}
    where $\delta_S(t;|px|):=\arccos(1-\frac{(S-1)t}{2|px|})$ is a non-negative continuous function on $(0,t_0]$, and $t_0$ is a constant depending only on the distance $|px|$, the constant $S$, and the length $l(\gamma)$.

    \item \textbf{First variation formula.}\label{item:first_variation} 
    The function $t\mapsto |p\gamma(t)|$ is differentiable at $t=0$ and satisfies
    \begin{equation}
      \frac{d^+}{dt}\bigg|_{t=0}\left|p\gamma(t)\right|:=\lim_{t\searrow 0}\frac{\left|p\gamma(t)\right| - \left|p\gamma(0)\right|}{t} = - \cos \angle px\gamma.
    \end{equation}

    \item \textbf{Sum of adjacent angles viewed from a common point.}\label{item:sum_adjacent_angles} 
    The sum of adjacent angles viewed from a common fixed point is at most $\pi$.
    More precisely, for any unit-speed geodesic $c$ and any interior point $y\in c$, it holds
    \begin{equation}
      \angle py\xi + \angle py\eta \leq \pi,
    \end{equation}
    where $\xi$ and $\eta$ are re-parametrizations by arc-length of two geodesic segments of $c$ separated by $y$ with the common starting point $\xi(0)=\eta(0)=y$. 

    \item \textbf{Lower semi-continuity.}\label{item:lower_semicontinuity} 
    The angle $\angle px\gamma$ is lower semi-continuous with respect to the point $p$ and the unit-speed geodesic $\gamma$.
    More precisely, given a sequence of points $\{p_i\}_i\subset X$ and a sequence of constant-speed geodesics $\{\gamma_i\}_i\subset X$ such that $\gamma_i$ converges pointwise to a non-trivial constant-speed geodesic $\gamma$ and $p_i$ converges to a point $p\neq \gamma(0)$, it holds
    \begin{equation}
      \angle px\xi \leq \liminf_{i\to \infty}\angle p_ix_i\xi_i,
    \end{equation} 
    where $\xi_i$ and $\xi$ denote the arc-length re-parametrizations of $\gamma_i$ and $\gamma$, respectively, such that $\xi_i(0)=\gamma_i(0)=x_i$ and $\xi(0)=\gamma(0)=x$.
  \end{enumerate}
\end{lemma}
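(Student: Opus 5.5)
The plan is to derive everything from a single monotone quantity built out of $S$-concavity. Fix $p$, the unit-speed geodesic $\gamma:[0,l]\to X$ with $\gamma(0)=x\neq p$, and set $a:=|px|$, $d(t):=|p\gamma(t)|$. Apply Definition~\ref{def:S_concave} to the constant-speed reparametrization of $\gamma|_{[0,s]}$ at parameter $t/s$, for $0<t<s\leq l$:
\begin{equation*}
  d(t)^2\geq \Bigl(1-\tfrac ts\Bigr)a^2+\tfrac ts\, d(s)^2-St(s-t).
\end{equation*}
After rearranging, this says exactly that
\begin{equation*}
  h(t):=\frac{d(t)^2-a^2}{t}-St
\end{equation*}
is non-increasing on $(0,l]$. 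The triangle inequality gives $|d(t)-a|\leq t$, hence $|h(t)|\leq 2a+(S+1)t$. Therefore the limit
\begin{equation*}
  L:=\lim_{t\searrow0}h(t)=\sup_{t\in(0,l]}h(t)
\end{equation*}
exists and lies in $[-2a,2a]$. The law of cosines gives
\begin{equation*}
  \cos\tilde\angle px\gamma(t)=\frac{a^2+t^2-d(t)^2}{2at}=\frac{(1-S)t-h(t)}{2a}\longrightarrow -\frac{L}{2a},
\end{equation*}
so the limit in \eqref{eq:angle_viewed_from_fixed_point} exists and $\angle px\gamma=\arccos(-L/2a)$.

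\emph{First variation (ii).} Write $d(t)-a=t\bigl(h(t)+St\bigr)/(d(t)+a)$. Since $d(t)\to a$, the right derivative at $0$ is $L/(2a)=-\cos\angle px\gamma$.

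\emph{Almost comparison (i).} Since $h(t)\leq L$, we get
\begin{equation*}
  \cos\tilde\angle px\gamma(t)\geq \cos\angle px\gamma-\frac{(S-1)t}{2a}.
\end{equation*}
Choose $t_0\leq l$ small enough (depending on $a,S,l$) that $v:=(S-1)t/(2a)\leq 1$, and set $\beta:=\arccos(1-v)=\delta_S(t;a)$ and $\alpha:=\angle px\gamma$. It remains to show $\tilde\angle px\gamma(t)\leq\alpha+\beta$.
\begin{itemize}
  \item If $\alpha+\beta\geq\pi$, the bound is trivial because comparison angles lie in $[0,\pi]$.
  \item If $\alpha+\beta<\pi$, it suffices to show $\cos(\alpha+\beta)\leq\cos\alpha-(1-\cos\beta)$. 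This is equivalent to $(1-\cos\alpha)(1-\cos\beta)\leq\sin\alpha\sin\beta$. In half-angle form this reads $\tan(\alpha/2)\tan(\beta/2)\leq1$, which holds precisely when $\alpha+\beta\leq\pi$.
\end{itemize}
I expect this elementary trigonometric step, together with the bookkeeping for $t_0$, to be the only mildly delicate point.

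\emph{Adjacent angles (iii).} Let $y$ be an interior point of $c$, so that $\xi(t)$ and $\eta(t)$ are the endpoints of a segment of $c$ of length $2t$ with midpoint $y$. Set $a:=|py|$. Applying $S$-concavity at the midpoint gives
\begin{equation*}
  a^2\geq\tfrac12|p\xi(t)|^2+\tfrac12|p\eta(t)|^2-St^2,
\end{equation*}
that is, $h_\xi(t)+h_\eta(t)\leq 0$. Letting $t\to0$ gives $L_\xi+L_\eta\leq0$, i.e. $\cos\angle py\xi+\cos\angle py\eta\geq0$. Hence $\cos\angle py\xi\geq\cos(\pi-\angle py\eta)$, which gives $\angle py\xi+\angle py\eta\leq\pi$.

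\emph{Lower semicontinuity (iv).} Define $a_i$, $h_i$, $L_i$ for $p_i$ and $\xi_i$ in the same way as above. Pointwise convergence $\gamma_i\to\gamma$ implies that the lengths converge, so the arc-length reparametrizations $\xi_i(t)\to\xi(t)$ for each fixed small $t$. Also $a_i\to a>0$. Hence $h_i(t)\to h(t)$ for each fixed $t$. Since $L_i=\sup_t h_i(t)\geq h_i(t)$, we get $\liminf_i L_i\geq h(t)$ for every $t$, and taking the supremum over $t$ yields $\liminf_iL_i\geq L$. Finally, $\angle=\arccos(-L/2a)$ is continuous in $a$ and increasing in $L$, so $\angle px\xi\leq\liminf_i\angle p_ix_i\xi_i$.
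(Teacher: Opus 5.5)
Your proof is correct in all four parts. The paper does not reprove this lemma (it is quoted from \cite[Section~4.1]{han2025structure}), and your route is the natural one. The monotonicity of $h$ is equivalent to monotonicity of $t\mapsto\cos\tilde{\angle}px\gamma(t)+\frac{(S-1)t}{2|px|}$, and it comes from $S$-concavity on initial subsegments. Combined with your half-angle step turning $\cos\tilde{\angle}px\gamma(t)\geq\cos\angle px\gamma-\frac{(S-1)t}{2|px|}$ into an angle bound, it reproduces exactly the error function $\delta_S(t;|px|)$ in (i). The first variation formula, the adjacent-angle bound and lower semicontinuity then follow from the same monotone quantity, as you describe.
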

\begin{remark}[Asymmetry of angles viewed from a point]\label{rmk:asymmetry_angles_viewed_from_fixed_point}
  In general, angles viewed from a fixed point are not \emph{symmetric}.
  More precisely, given three distinct points $p,x,q\in X$ and geodesics $\xi,\eta$ connecting $x$ to $p$ and $q$, respectively, $\angle px\eta$ and $\angle qx\xi$ need not coincide, even when $X$ is uniquely geodesic.
  For example, let $a:=(1,2), b:=(4,-1)$ be two points in $\ell^2_3:=(\mathbb{R}^2, \|\cdot\|_3)$, and let $\xi$ and $\eta$ be the unit-speed geodesic segments from the origin $o$ to $a$ and $b$, respectively.
  A direct computation gives
  \begin{equation}
      \angle ao\eta=\pi/2,
      \qquad
      \angle b o\xi=\arccos\left(\frac{14}{195^{2/3}}\right)
        \in\left(0,\frac{\pi}{2}\right).
  \end{equation}
  This example also illustrates the non-symmetry of \emph{Birkhoff--James orthogonality}, introduced by Birkhoff \cite{birkhoff1935orthogonality} and further developed by James \cite{james1945orthogonality,james1947orthogonality} for general normed spaces.
  We refer to \cite{grover2021birkhoff} for a recent survey on Birkhoff--James orthogonality and its relationship with inner product structures on normed spaces.
  See \cite{kell2016symmetric} for an extension of Birkhoff--James orthogonality to Busemann convex spaces.
\end{remark}


We now turn to another notion of angle defined on Busemann concave spaces, called the \emph{angle of fixed scales}. 
This notion of angle is connected to the geometry of tangent cones of Busemann concave spaces; see \cite[Section 4.2]{han2025structure} for more details.
\begin{definition}[Angles of fixed scale]\label{def:angles_of_fixed_scales}
  Let $(X,\mathsf d)$ be a Busemann concave space, $x\in X$ be a point, and let $\xi,\eta:[0,1]\to X$ be two non-trivial unit-speed geodesics starting from $x$.
  For any $t,s>0$, the angle $\angle_x (\xi(t),\eta(s))$, referred to as the \emph{angle between $\xi$ and $\eta$ at $x$ of scales $(t,s)$}, is defined as
  \begin{equation}\label{eq:angles_of_fixed_scales}
    \angle_x\left(\xi(t), \eta(s)\right):=\sup_{\substack{\theta\in (0,1]\\\max\{\theta t,\theta s\}\leq a}} \tilde{\angle}_x \left(\xi(\theta t), \eta(\theta s)\right),
  \end{equation}
  where $a>0$ is an arbitrary positive number such that both $\xi$ and $\eta$ are defined on the interval $I_a:=[0,a]$.
  We call $\angle_x(\xi(t),\eta(s))$ the \emph{angle of common scale} if the scales $t$ and $s$ coincide.
\end{definition}
Note that, in the definition of angles of fixed scale, the scales $t$ and $s$ are allowed to lie outside the domains of the unit-speed geodesics $\xi$ and $\eta$.
By Busemann concavity, angles of fixed scale are well-defined, and the supremum in \eqref{eq:angles_of_fixed_scales} is in fact a limit and is independent of the choice of $a$.
Furthermore, angles of fixed scale are positively scaling-invariant, i.e., $\angle_x(\xi(\lambda t),\eta(\lambda s))=\angle_x(\xi(t),\eta(s))$ for any $\lambda>0$ and any non-trivial unit-speed geodesics $\xi,\eta$ starting from $x$.
Therefore, it depends only on the ratio $t/s$, rather than on the individual values of $t$ and $s$.

Finally, we point out that on $S$-concave Busemann concave spaces, these two notions of angle are generally different, even when the underlying spaces are uniquely geodesic; see \cite[Example 4.14]{han2025structure}.

\subsection{Tangent cones and spaces of directions with common length}\label{subsect:tangent_cones}

\noindent
In this subsection, we recall the notions of tangent cones and spaces of directions with common length on Busemann concave spaces; see \cite[Section 2.3]{kell2019sectional} and \cite[Section 4.2]{han2025structure} for further details.
\begin{definition}[Tangent cones on Busemann concave spaces]\label{def:tangent_cones_Busemann_concave}
  Let $(X,\mathsf{d})$ be a Busemann concave space.
  We denote by $\Gamma_x$ the collection of all non-trivial maximal unit-speed geodesics starting from $x\in X$.
  The pre-tangent cone $\hat{T}_xX$ at $x$ is defined as the set $\Gamma_x\times [0,\infty)/\sim$, where all points of the form $(\gamma, 0), \gamma\in \Gamma_x$, are identified with the common point $o$.
  The metric $\mathsf{d}_x$ on $\hat{T}_xX$ is defined as follows: for any two points $v:=(\gamma,t), w:=(\eta,s)\in \hat{T}_xX$, let $I_a:=[0,a]$ be an interval such that both $\gamma$ and $\eta$ are defined on $I_a$.
  The $\mathsf{d}_x$-distance between $v$ and $w$ is defined as
  \begin{equation}\label{eq:tangent_cones_Busemann_concave}
    \mathsf d_x\left((\gamma,t), (\eta,s)\right):=\sup_{\substack{\theta\in (0,1],\\\max\{\theta t, \theta s\}\leq a}}\frac{|\gamma(\theta t),\eta(\theta s)|}{\theta}.
  \end{equation}
  The tangent cone $T_xX$ at $x$ is defined as the completion of $\hat{T}_xX$ with respect to the metric $\mathsf d_x$.
  For $v\in T_xX$, we denote by $|v|_x:=\mathsf d_x(v,o)$ the distance from $v$ to $o$.
  We call the point $o$ the \emph{apex} of the tangent cone $T_xX$.
\end{definition}
As shown in \cite[Lemma 2.17]{kell2019sectional}, the metric $\mathsf{d}_x$ is well-defined on tangent cones of Busemann concave spaces, and the supremum in \eqref{eq:tangent_cones_Busemann_concave} is in fact a limit, which is independent of the choice of $a$.
Moreover, the metric $\mathsf{d}_x$ is positively homogeneous and is related to the angle of fixed scale through the Euclidean law of cosines:
\begin{equation}
  \mathsf{d}_x\left((\gamma,t), (\eta, s)\right)^2
  =
  t^2 + s^2 - 2 t s \cos \angle_x (\gamma(t), \eta(s)),\quad \text{for any } (\gamma,t),(\eta,s)\in \hat{T}_xX,
\end{equation}
see \cite[Lemma 4.16]{han2025structure}.
Note that, as observed by Kell in \cite[Section 2.3]{kell2019sectional}, the tangent cones defined in Definition~\ref{def:tangent_cones_Busemann_concave} do not necessarily coincide with the Gromov--Hausdorff tangent cones of Busemann concave spaces, even when the underlying spaces are compact.
However, when $X$ is (locally) doubling, these two notions of tangent cone do coincide.
In particular, each point admits a unique tangent cone in this case; see \cite[Corollary 2.21]{kell2019sectional} (see also \cite[Proposition 4.7]{han2025structure}).

We now turn to the geometry of tangent cones in Busemann concave spaces.
The dependence of angles of fixed scale on the scale ratio prevents us from identifying the space of directions at $x$ with the set of equivalence classes of unit-speed geodesics starting from $x$ equipped with the angle of fixed scale as a metric, as in the case of Alexandrov spaces.
Consequently, tangent cones of Busemann concave spaces have a more intricate structure than the metric cone over the space of directions.
Motivated by the study of the geometry of Banach spaces through unit spheres, we introduced in \cite{han2025structure} the following notion of \emph{spaces of directions with common length} to better understand the geometry of tangent cones of Busemann concave spaces.
\begin{definition}[Spaces of directions with common length]\label{def:space_of_directions_common_length}
  Let $(X,\mathsf d)$ be a Busemann concave space and let $x\in X$ be a point.
  Given $l>0$, we denote by $\hat{\Sigma}_x^lX$ the subset of the pre-tangent cone $\hat{T}_xX$ consisting of all elements of the form $(\gamma,l)\in \hat{T}_xX$.
  The \emph{space of directions at $x$ with common length $l$}, denoted by $\Sigma_x^lX$, is defined as the closed subset of the tangent cone $T_xX$ consisting of all elements $v\in T_xX$ whose distance from $o$ is $l$.
  We define $\angle_x(\cdot,\cdot): \Sigma_x^lX \times \Sigma_x^lX \to [0,\pi]$ by
  \begin{equation}
    \angle_x(v,w):=\arccos\left(\frac{|v|_x^2 + |w|_x^2 - \mathsf{d}_x(v,w)^2}{2|v|_x |w|_x}\right),\quad \text{for any }v,w\in \Sigma_x^lX.
  \end{equation}
\end{definition}

Recall that a function $\rho:E\times E\to \mathbb{R}^+$ on a set $E$ is called a \emph{quasi-metric}\footnote{Some authors also refer to such generalized metric functions as a \emph{semi-metric}; see, for example, \cite{willard70topology}.} if $\rho$ satisfies all the axioms of a metric except that the triangle inequality is replaced by a relaxed one, i.e., $\rho(x,y)\leq C(\rho(x,z) + \rho(y,z))$ holds for any $x,y,z\in E$ for some $C\geq 1$; see, for example, \cite{xia2009geodesic}.
It is shown in \cite[Lemma 4.19]{han2025structure} that $(\Sigma_x^lX,\angle_x)$ is a quasi-metric space when equipped with $\angle_x(\cdot,\cdot)$. 
More precisely, $\angle_x(\cdot,\cdot)$ satisfies the following relaxed triangle inequality:
\begin{equation}
  \angle_x(u,v)\leq 2\left(\angle_x(u,w) + \angle_x(w,v)\right),\quad \text{for any }u,v,w\in \Sigma_x^lX.
\end{equation}
Furthermore, the subset $\hat{\Sigma}_x^lX$ is dense in $\Sigma_x^lX$ with respect to the angle quasi-metric $\angle_x(\cdot,\cdot)$, and $\angle_x(\cdot,\cdot)$ is positively scaling-invariant; that is,
\begin{equation}
  \angle_x((\gamma,\lambda l),(\eta,\lambda l))=\angle_x((\gamma,l),(\eta,l)),
\end{equation}
for any $\lambda>0$ and $(\gamma,l),(\eta,l)\in \hat{\Sigma}_x^lX$.

The quasi-metric structure of spaces of directions with common length enables us to prove the uniform compactness of these spaces in doubling Busemann concave spaces, which is used to prove the Hausdorff dimension estimates for singular sets.
We refer to \cite[Lemma 4.20]{han2025structure} for a detailed proof.
\begin{lemma}\label{lem:uniform_compactness_space_of_directions_common_length}
  Let $(X,\mathsf d)$ be a doubling Busemann concave space and $x\in X$.
  Then the family of quasi-metric spaces $\{(\Sigma_x^lX, \angle_x)\}_{l>0}$ is uniformly compact in the sense that for any $\varepsilon>0$, there exists a constant $N_0(\varepsilon)>0$, depending only on $\varepsilon$ and the doubling constant of $X$, such that every $\varepsilon$-separated subset of $(\Sigma_x^lX, \angle_x)$ has cardinality at most $N_0(\varepsilon)$ for any $l>0$.
\end{lemma}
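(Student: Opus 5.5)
The plan is to reduce the uniform compactness to the doubling property of the tangent cone $T_xX$, using the two scaling invariances recalled above: $\angle_x$ is scaling-invariant on $\Sigma_x^lX$, and $\mathsf d_x$ is positively homogeneous. By these it suffices to treat the single length $l=1$. Indeed, the dilation $v\mapsto \lambda v$, defined on $\hat{\Sigma}_x^lX$ by $(\gamma,l)\mapsto(\gamma,\lambda l)$, carries $\hat{\Sigma}_x^lX$ onto $\hat{\Sigma}_x^{\lambda l}X$ and preserves $\angle_x$. It extends by density and continuity to a bijection $\Sigma_x^lX\to\Sigma_x^{\lambda l}X$ that still preserves $\angle_x$. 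So the maximal cardinality of $\varepsilon$-separated sets does not depend on $l$.

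Next I convert angle separation into distance separation. For $v,w\in\Sigma_x^1X$ the definition gives
\begin{equation*}
  \mathsf d_x(v,w)=2\sin\bigl(\angle_x(v,w)/2\bigr).
\end{equation*}
Hence an $\varepsilon$-separated set in $(\Sigma_x^1X,\angle_x)$ is $2\sin(\varepsilon/2)$-separated in $(T_xX,\mathsf d_x)$, for $\varepsilon\le\pi$; for $\varepsilon>\pi$ the bound is trivial. Moreover, such a set lies in the ball $\bar B(o,1)\subset B(o,2)$ of $T_xX$. It therefore suffices to show that $(T_xX,\mathsf d_x)$ is doubling with a constant $N'$ depending only on the doubling constant $N$ of $X$. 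Iterating the doubling property $k$ times, with $2^{-k+1}\le \sin(\varepsilon/2)$, then bounds the cardinality by $N_0(\varepsilon):=N'^{\,k}$.

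The main step is doubling of $T_xX$. I would use that, for doubling $X$, the tangent cone $T_xX$ coincides with the Gromov--Hausdorff tangent cone, i.e.\ the pointed GH limit of the blow-ups $(X,\mathsf d/\lambda_i,x)$ (\cite[Corollary 2.21]{kell2019sectional}, \cite[Proposition 4.7]{han2025structure}). Each blow-up is $N$-doubling with the same $N$, since rescaling preserves the doubling constant.

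Doubling then passes to pointed GH limits with a controlled constant. Take an $r/2$-separated set $\{y_1,\dots,y_m\}$ in a ball $B(y,r)$ of the limit. Approximate the points by points in a blow-up, up to an error $\eta\ll r$. The approximants form an $(r/2-2\eta)$-separated set in a ball of radius $r+2\eta$. Applying the doubling bound twice, with a fixed number of iterations, gives $m\le N^3$, say. So $T_xX$ is $N^3$-doubling.

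I expect the only delicate point to be this identification of $T_xX$ with a GH limit, but it is available from the cited results. If one prefers to avoid it, one can argue directly from \eqref{eq:tangent_cones_Busemann_concave}. A finite separated set of elements $(\gamma_j,1)\in\hat\Sigma_x^1X$ yields, for small $\theta$, points $\gamma_j(\theta)$ in $B(x,2\theta)$ that are roughly $\theta\sin(\varepsilon/2)$-separated, because the supremum is a limit. Doubling of $X$ then bounds their number directly. Density of $\hat\Sigma_x^1X$ handles general elements, at the cost of replacing $\varepsilon$ by $\varepsilon/2$ via the relaxed triangle inequality for $\angle_x$.
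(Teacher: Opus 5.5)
Your proof is correct, but its main line is not the route the paper relies on. The paper does not reprove this lemma; it cites \cite[Lemma 4.20]{han2025structure}, and the surrounding text says the proof rests on the quasi-metric structure of $(\Sigma_x^lX,\angle_x)$ and the density of $\hat\Sigma_x^lX$. That description fits your fallback argument:
\begin{enumerate}
\item approximate an $\varepsilon$-separated family by elements $(\gamma_j,l)\in\hat\Sigma_x^lX$;
\item use that the supremum defining $\mathsf d_x$ is a monotone limit as $\theta\searrow 0$ (by Busemann concavity), so for small $\theta$ the points $\gamma_j(\theta l)$ are $c(\varepsilon)\,\theta l$-separated in $\bar B(x,\theta l)$;
\item count these points with the doubling constant of $X$.
\end{enumerate}

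Your primary route instead shows that $(T_xX,\mathsf d_x)$ is itself $N^3$-doubling. It does this via Kell's identification of $T_xX$ with the Gromov--Hausdorff tangent cone \cite[Corollary 2.21]{kell2019sectional} and the stability of doubling under pointed GH limits. You then read off the bound from $\mathsf d_x(v,w)=2l\sin(\angle_x(v,w)/2)$ on $\Sigma_x^lX$. This yields uniform doubling of the whole tangent cone as a by-product and makes the $l$-independence transparent. The cost is that the essential step is outsourced to a nontrivial external theorem, whose proof uses the same small-scale comparison. The direct argument is self-contained: it needs only the definition of $\mathsf d_x$ and doubling of $X$ at scale $\theta$.

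One small correction to your fallback. Applying $\angle_x(u,v)\le 2(\angle_x(u,w)+\angle_x(w,v))$ at both endpoints degrades the separation to about $\varepsilon/4$, not $\varepsilon/2$. This does not affect the conclusion, and the loss can be avoided entirely. On $\Sigma_x^lX$ one has $\angle_x=2\arcsin(\mathsf d_x/(2l))$, a continuous function of the genuine metric $\mathsf d_x$, so approximating by elements of $\hat\Sigma_x^lX$ loses only an arbitrarily small amount of separation. The same identity, together with scale invariance of the doubling bound, also makes your preliminary reduction to $l=1$ unnecessary.
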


\section{Weak quadruple condition}\label{sect:weak_quadruple_condition}
\noindent
This section forms the comparison-theoretic core of the paper. 
After introducing the \emph{$(\varepsilon,\delta)$-weak quadruple condition}, we prove its compatibility with $S$-concavity: for every $\varepsilon>0$, every $S$-concave space with $S\geq 1$ satisfies the $(\varepsilon,\delta)$-weak quadruple condition, provided that $\delta>0$ is sufficiently small.

\medskip

Recall that $\delta_S(s;t):=\arccos(1-(S-1)s/(2t))$ is the non-negative error function defined in Lemma~\ref{lem:angle_viewed_from_fixed_point}.
\begin{definition}[Weak quadruple condition]\label{def:weak_quadruple_condition}
  Let $X$ be an $S$-concave space.
  Given $\varepsilon\in [0,\pi],\ \delta\in (0,\pi]$, we say that $X$ satisfies the {\em $(\varepsilon,\delta)$-weak quadruple condition} if the following holds: for any four distinct points $(x;p, y, z)$ in $X$, satisfying $\tilde{\angle}pxy \geq \pi-\delta$, $|xz|\leq |xy|\leq |px|$, and
  \begin{equation}\label{eq:weak_quadruple_condition_1}
    \delta_S(|xz|;|xy|)\leq \delta,\quad \delta_S(|xy|;|px|)\leq \delta,
  \end{equation}
  it holds
  \begin{equation}
    \tilde{\angle}pxy + \tilde{\angle}pxz + \tilde{\angle}yxz \leq 2\pi + \varepsilon.
  \end{equation}
  We call $\delta$ the \emph{straightness parameter} and $\varepsilon$ the \emph{angle-excess parameter}.
\end{definition}

It is immediate from the definition that if an $S$-concave space satisfies the $(\varepsilon,\delta)$-weak quadruple condition, then it also satisfies the $(\varepsilon',\delta')$-weak quadruple condition for every $\delta'<\delta$ and $\varepsilon'>\varepsilon$.
Moreover, any $S$-concave space with $S=1$, that is, any Alexandrov space with non-negative curvature, satisfies the strongest form, namely the $(0,\pi)$-weak quadruple condition.
\begin{remark}
  Our definition is weaker than the relaxed quadruple condition appearing in \cite[Remark 6.9]{burago1992ad}: that condition relaxes the upper bound for the sum of comparison angles to $2\pi$ plus a dimension-dependent error term, but imposes no restriction on the configuration of points as well as on the near-collinearity of quadruples.
\end{remark}

\begin{remark}
  We deliberately impose restrictions on the configuration of points, together with the near-collinearity condition, so that the $(\varepsilon,\delta)$-weak quadruple condition can hold for nontrivial $\varepsilon>0$ in $S$-concave spaces with large $S$.
  For example, consider the points $o=(0,0)$, $a=(0,1)$, $b=(1,-1)$, and $c=(-1,-1)$ in the space $\ell^2_p:=(\mathbb{R}^2,\|\cdot\|_p)$ for $p\in (2,\infty)$.
  A direct computation shows that $\tilde{\angle}aob=\tilde{\angle}aoc=\arccos\!\left(\frac{1+2^{1/p}-(1+2^p)^{2/p}}{2^{1+1/p}}\right)\nearrow \pi$ and $\tilde{\angle}boc=\arccos(1-2^{1-2/p})\nearrow \pi$  as $p\to \infty$.
  In particular, $\tilde{\angle}aob + \tilde{\angle}aoc + \tilde{\angle}boc \to 3\pi$ as $p\to \infty$.
\end{remark}

In the remainder of this section, we show that every $S$-concave space automatically satisfies the $(\varepsilon,\delta)$-weak quadruple condition for any $\varepsilon>0$, provided that $\delta>0$ is chosen sufficiently small.
Before stating the precise result, we introduce the following notion of weak upper angle, which is a variant of the usual upper angle; see, for example, \cite[Definition I.1.12]{bridson1999metric}.


\begin{definition}[$S$-upper angles]\label{def:S_upper_angles}
  Let $X$ be an $S$-concave space with $S\geq 1$.
  Given two unit-speed geodesics $\eta,\xi$ emanating from a common point $x\in X$, we denote by $\angle_S(\eta,\xi)$ the $S$-upper angle from $\eta$ to $\xi$ at $x$, defined as
  \begin{equation}
    \angle_S(\eta,\xi):=\sup_{s>0}\angle \eta(s)x\xi.
  \end{equation}
\end{definition}
The supremum in the definition is actually realized as a limit, as implied by the following lemma.
\begin{lemma}\label{lemma:monotonicity_angle_view_point}
  Let $X$ be an $S$-concave space with $S\geq 1$, and let $\eta,\xi\subset X$ be two unit-speed geodesics emanating from a common point $x\in X$.
  Then the function $s\mapsto \angle \eta(s)x\xi$ is non-increasing on $(0,l(\eta))$.
  In particular, the $S$-upper angle $\angle_S(\eta,\xi)$ is equal to $\lim_{s\searrow 0}\angle \eta(s)x\xi$.
\end{lemma}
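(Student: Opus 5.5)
The plan is to reduce the monotonicity to the triangle inequality, using the first variation formula of Lemma~\ref{lem:angle_viewed_from_fixed_point}\ref{item:first_variation}, which expresses each angle $\angle \eta(s)x\xi$ as a one-sided derivative of a distance function. Since $\arccos$ is decreasing, it suffices to show that $s\mapsto\cos\angle\eta(s)x\xi$ is non-decreasing. So I fix $0<s'<s<l(\eta)$. Both points $\eta(s)$ and $\eta(s')$ are distinct from $x=\xi(0)$, so Lemma~\ref{lem:angle_viewed_from_fixed_point}\ref{item:first_variation} applies to both. It gives
\begin{equation*}
  \cos\angle \eta(s)x\xi=-\lim_{t\searrow 0}\frac{|\eta(s)\xi(t)|-s}{t},
  \qquad
  \cos\angle \eta(s')x\xi=-\lim_{t\searrow 0}\frac{|\eta(s')\xi(t)|-s'}{t},
\end{equation*}
where I have used $|\eta(s)x|=s$ and $|\eta(s')x|=s'$, because $\eta$ is a unit-speed geodesic from $x$.

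The key estimate is the triangle inequality through the intermediate point $\eta(s')$ on $\eta$. Since $|\eta(s)\eta(s')|=s-s'$, for every small $t>0$ we have
\begin{equation*}
  |\eta(s)\xi(t)|\leq |\eta(s)\eta(s')|+|\eta(s')\xi(t)| = (s-s')+|\eta(s')\xi(t)|,
\end{equation*}
that is,
\begin{equation*}
  |\eta(s)\xi(t)|-s\leq |\eta(s')\xi(t)|-s'.
\end{equation*}
Dividing by $t>0$ and letting $t\searrow 0$ is legitimate, since both limits exist by the first variation formula. This yields $-\cos\angle\eta(s)x\xi\leq -\cos\angle\eta(s')x\xi$, hence $\angle\eta(s)x\xi\leq\angle\eta(s')x\xi$, which is the claimed monotonicity.

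Finally, a non-increasing function on $(0,l(\eta))$ has its supremum equal to its limit as $s\searrow 0$. For $s\geq l(\eta)$ the expression $\angle\eta(s)x\xi$ is not defined, so the supremum in Definition~\ref{def:S_upper_angles} ranges over $s\in(0,l(\eta))$. Therefore $\angle_S(\eta,\xi)=\lim_{s\searrow0}\angle\eta(s)x\xi$.

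I do not expect a real obstacle here. The only delicate point is that the first variation formula supplies genuine limits rather than only $\liminf$ or $\limsup$. This is exactly the content of Lemma~\ref{lem:angle_viewed_from_fixed_point}\ref{item:first_variation}, and it is the one place where $S$-concavity enters.
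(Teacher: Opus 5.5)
Your proposal is correct and follows essentially the same route as the paper: it writes $\cos\angle\eta(s)x\xi$ via the first variation formula and then compares the two difference quotients using the triangle inequality through $\eta(s')$ together with $|\eta(s)x|=|\eta(s)\eta(s')|+|\eta(s')x|$. The passage from monotonicity to $\angle_S(\eta,\xi)=\lim_{s\searrow 0}\angle\eta(s)x\xi$ is also handled the same way.
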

\begin{proof}
  Let $0<s<t\leq l(\eta)$.
  By the triangle inequality of distance and the first variation formula (Lemma~\ref{lem:angle_viewed_from_fixed_point}~\ref{item:first_variation}), it follows that
  \begin{multline}
    \cos \angle \eta(t)x\xi
    =
    \lim_{\tau \searrow 0}\frac{|\eta(t)x|-|\eta(t)\xi(\tau)|}{\tau}\\
    \geq
    \lim_{\tau \searrow 0}\frac{|\eta(t)\eta(s)|+|\eta(s)x|-|\eta(t)\eta(s)|-|\eta(s)\xi(\tau)|}{\tau}\\
    =
    \lim_{\tau \searrow 0}\frac{|\eta(s)x|-|\eta(s)\xi(\tau)|}{\tau}
    =
    \cos \angle \eta(s)x\xi.
  \end{multline}
  This implies that $\angle \eta(t)x\xi \leq \angle \eta(s)x\xi$, and the claim follows.
\end{proof}
\begin{remark}
  In general, a $S$-upper angle is not symmetric: $\angle_S(\eta,\xi)$ need not coincide with $\angle_S(\xi,\eta)$.
  Moreover, unlike usual upper angles (see \cite[Proposition I.1.14]{bridson1999metric}), it need not satisfy the triangle inequality.
  A $S$-upper angle also need not coincide with the usual upper angle.
\end{remark}


We are now ready to present the main result of this subsection.
We establish the compatibility through a quantitative estimate on the sum of comparison angles of quadruples in terms of the straightness parameter $\delta$ and the uniform smoothness constant $S$.
The main difficulty is that neither the $S$-upper angles nor angles viewed from a point satisfy the triangle inequality in general.
To overcome this obstacle, we establish a relaxed triangle inequality that bounds the angle viewed from a point in terms of the corresponding $S$-upper angles, up to an error controlled by $\delta$ and $S$, provided that the angle viewed from a point is sufficiently close to $\pi$.
This estimate is a key ingredient in the proof.
Following a similar strategy to \cite[Proposition I.1.14]{bridson1999metric}, we construct an auxiliary comparison triangle in $\mathbb{R}^2$ and derive a contradiction if the relaxed triangle inequality fails.
\begin{proposition}[Compatibility]\label{lemma:verification_WQC_II}
  Let $X$ be an $S$-concave space with $S\geq 1$, and let $0<\delta\leq 1/64$ be a small number such that $4(S-1)\sqrt{\delta}\leq 1$.
  Let $(x;p,y,z)$ be four distinct points in $X$ satisfying the conditions of Definition~\ref{def:weak_quadruple_condition} with the straightness parameter $\delta$.
  Then it holds
  \begin{equation}\label{eq:verification_WQC_II}
    \tilde{\angle}pxy + \tilde{\angle}pxz + \tilde{\angle}yxz
    \leq
    2\pi + \arccos\left(1-4(S-1)\sqrt{\delta}\right) + 14\delta + 2\sqrt{\delta}.
  \end{equation}
  In particular, given any $\varepsilon\in (0,\pi]$, there exists $\bar{\delta}:=\bar{\delta}(\varepsilon,S)>0$ such that $X$ satisfies the $(\varepsilon,\delta)$-weak quadruple condition for all $\delta\in (0,\bar{\delta}]$.
\end{proposition}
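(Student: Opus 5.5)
We plan to prove the proposition by reducing the three-angle estimate to a relaxed triangle inequality for angles viewed from a point, and then transferring back to comparison angles via the almost comparison inequality (Lemma~\ref{lem:angle_viewed_from_fixed_point}~\ref{item:almost_comparison}). Fix unit-speed geodesics $\xi_p,\xi_y,\xi_z$ from $x$ to $p,y,z$. The hypotheses $\delta_S(|xy|;|px|)\le\delta$ and $\delta_S(|xz|;|xy|)\le\delta$ say that $(S-1)|xy|/|px|$ and $(S-1)|xz|/|xy|$ are $O(\delta^2)$ quantities, i.e.\ the scales are hierarchically separated relative to $S-1$. Since $\tilde\angle pxy\ge\pi-\delta$, the broken path $p\to x\to y$ is nearly straight, and we expect the geodesic $\xi_y$ to leave $x$ almost opposite to $p$ as seen from $p$: concretely, we would first show that $\angle px\xi_y\ge\pi-\delta-O(\delta)$ by combining $\tilde\angle pxy\ge \pi-\delta$ with the almost comparison inequality at scale $|xy|$, the error being $\delta_S(|xy|;|px|)\le\delta$.

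The second step is the key relaxed triangle inequality: for the three directions at $x$ we aim to show
\[
\angle px\xi_z+\angle_S(\xi_y,\xi_z)\ \ge\ \pi - \arccos\bigl(1-4(S-1)\sqrt\delta\bigr)-C\delta-2\sqrt\delta
\]
type bounds, i.e.\ roughly that $\angle px\xi_z\gtrsim \pi-\angle_S(\xi_y,\xi_z)$ up to an error. Following \cite[Proposition I.1.14]{bridson1999metric}, we argue by contradiction: if the inequality failed, we pick small $s,\tau$ and points $\xi_y(s)$, $\xi_z(\tau)$, build an auxiliary Euclidean triangle in $\mathbb R^2$ with a vertex $\tilde x$ and rays realizing the angles $\angle px\xi_z$ (measured against the direction towards $p$) and $\angle \xi_y(s)x\xi_z$, and use the first variation formula (Lemma~\ref{lem:angle_viewed_from_fixed_point}~\ref{item:first_variation}) together with the monotonicity Lemma~\ref{lemma:monotonicity_angle_view_point} to compare $|p\xi_z(\tau)|$ with $|p\xi_y(s)|+|\xi_y(s)\xi_z(\tau)|$, yielding a violation of the triangle inequality. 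The $S$-concavity enters in controlling how $|p\,\cdot|$ varies along the segment from $\xi_y(s)$ toward $\xi_z(\tau)$; the choice of $s\sim\sqrt\delta$-relative scales is what produces the terms $\arccos(1-4(S-1)\sqrt\delta)$ and $2\sqrt\delta$, and the hypothesis $4(S-1)\sqrt\delta\le 1$ keeps the arccos defined. This is the step I expect to be the main obstacle, since neither $\angle_S$ nor angles viewed from a point are symmetric or satisfy a triangle inequality, so the auxiliary construction must be arranged so that only one-sided quantities (angles viewed from $p$, and $\angle_S$ from $\xi_y$ to $\xi_z$ which is monotone) appear.

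Finally we convert back. We bound $\tilde\angle pxy\le\pi$, $\tilde\angle yxz\le \angle y x\xi_z+\delta_S(|xz|;|xy|)\le\angle_S(\xi_y,\xi_z)+\delta$ by the almost comparison inequality and Lemma~\ref{lemma:monotonicity_angle_view_point}, and $\tilde\angle pxz\le\angle px\xi_z+\delta_S(|xz|;|px|)$, with $\delta_S(|xz|;|px|)\le\delta$ since $|xz|\le|xy|$. Then, writing $\angle px\xi_z\le \pi$ is not enough; instead we use the relaxed inequality in the form $\angle px\xi_z+\angle_S(\xi_y,\xi_z)\le \pi+$error derived from $\angle px\xi_z\le\pi-\angle px\xi_y+\angle_S(\xi_y,\xi_z)$-style reasoning reversed, i.e.\ the contradiction argument is set up to give exactly $\tilde\angle pxz+\tilde\angle yxz\le \pi+\arccos(1-4(S-1)\sqrt\delta)+14\delta+2\sqrt\delta$, and summing with $\tilde\angle pxy\le\pi$ gives \eqref{eq:verification_WQC_II}. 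The ``in particular'' follows since the right-hand error tends to $0$ as $\delta\to0$ for fixed $S$, so choosing $\bar\delta(\varepsilon,S)$ with error $\le\varepsilon$ (and $\bar\delta\le1/64$, $4(S-1)\sqrt{\bar\delta}\le1$) suffices, monotonicity in $\delta$ being immediate.
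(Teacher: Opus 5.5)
There is a genuine gap, and it is one of direction. To finish the way your last paragraph intends, you need an \emph{upper} bound $\tilde\angle pxz+\tilde\angle yxz\le\pi+\text{error}$, essentially $\angle px\xi_z+\angle_S(\xi_y,\xi_z)\le\pi+\text{error}$. The relaxed inequality you plan to prove in your second step, $\angle px\xi_z+\angle_S(\xi_y,\xi_z)\ge\pi-\text{error}$, is a triangle inequality through the intermediate direction $\xi_z$. It bounds that sum from \emph{below}.

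A Bridson--Haefliger contradiction argument ends in a violation of the distance triangle inequality. It therefore always yields estimates of the form ``big angle $\le$ sum of two pieces''. It cannot be ``reversed'', and your final paragraph simply asserts the upper bound it needs. Your plan also never uses Lemma~\ref{lem:angle_viewed_from_fixed_point}~\ref{item:sum_adjacent_angles} (adjacent angles viewed from a point sum to at most $\pi$). That lemma is exactly where $S$-concavity supplies upper bounds on angle sums.

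The missing idea is to manufacture a direction opposite to $z$. Geodesics need not extend backwards through $x$, so the paper replaces $x$ by a point $x'$ on a geodesic $\eta_{xz}$, very close to $x$. It then uses $\gamma$ from $x'$ to $y$, $\xi$ from $x'$ back to $x$, and $\eta$ from $x'$ to $z$. The auxiliary-triangle argument is run with $\xi$, not $\eta$, as the intermediate direction, giving
\[
\angle px'\gamma\le\angle px'\xi+\angle_S(\gamma,\xi)+\arccos\bigl(1-4(S-1)\sqrt\delta\bigr)+2\sqrt\delta+8\delta .
\]
The adjacent-angle inequality is then applied twice:
\begin{itemize}
\item viewed from $p$, it gives $\angle px'\xi+\angle px'\eta\le\pi$;
\item viewed from $\gamma(s)$ and letting $s\searrow0$, it gives $\angle_S(\gamma,\xi)+\angle_S(\gamma,\eta)\le\pi$, while $\angle yx'\eta\le\angle_S(\gamma,\eta)$ by Lemma~\ref{lemma:monotonicity_angle_view_point}.
\end{itemize}
Combined with the almost comparison inequality, this bounds the three-angle sum at $x'$. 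Continuity of comparison angles as $x'\to x$ then gives \eqref{eq:verification_WQC_II}.

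Your split using $\tilde\angle pxy\le\pi$ can be recovered from this, since $\angle px'\gamma>\pi-4\delta$, at the cost of slightly worse constants. The error terms also arise differently from what you suggest. The $\sqrt\delta$ slack added to each piece is what makes the law-of-sines bound $|x'z'|/|x'y'|\le 4\delta/\sin\sqrt\delta\le 8\sqrt\delta$ work in the auxiliary triangle. The term $\arccos(1-4(S-1)\sqrt\delta)$ is the resulting bound on $\delta_S(|x'z'|;|x'y'|)$. Your treatment of the ``in particular'' clause is fine.
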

\begin{proof}
  We first establish the inequality \eqref{eq:verification_WQC_II}.
  Let $\eta_{xz}$ be a unit-speed geodesic from $x$ to $z$.
  Take $x'\in \eta_{xz}$ sufficiently close to $x$ such that $\tilde{\angle}px'y>\pi-2\delta$, $|xx'|<|x'z|$, and
  \begin{equation}\label{eq:Verification_WQC_II_App_000}
    \delta_S(|x'y|;|px'|)<2\delta,\quad \delta_S(|x'z|;|x'y|)<2\delta,\quad \delta_S(|x'z|;|px'|)<2\delta.
  \end{equation}
  Let $\gamma$ be a unit-speed geodesic from $x'$ to $y$, and let $\xi,\eta$ be the unit-speed geodesic segments of $\eta_{xz}$ from $x'$ to $x$ and $z$, respectively.

  \begin{enumerate}[fullwidth, label=\textbf{Step \arabic*}.]
    \item We prove the following relaxed triangle inequality by contradiction:
    \begin{equation}\label{eq:Verification_WQC_II_App_0}
      \angle px'\gamma \leq \angle px'\xi + \sqrt{\delta} + \angle_S(\gamma,\xi) + \arccos(1-4(S-1)\sqrt{\delta})+\sqrt{\delta} + 8\delta.
    \end{equation}
    Suppose the inequality \eqref{eq:Verification_WQC_II_App_0} does not hold.
    By Lemma~\ref{lem:angle_viewed_from_fixed_point}~\ref{item:almost_comparison} and \eqref{eq:Verification_WQC_II_App_000}, we have
    \[
      \angle px'\gamma\geq \tilde{\angle}px'y - 2\delta > \pi -4\delta.
    \]
    Thus, we can find some $\alpha\in (\pi-4\delta, \pi)$ such that
    \begin{equation}\label{eq:Verification_WQC_II_App_00}
      \angle px'\gamma > \alpha > \angle px'\xi + \sqrt{\delta} + \angle_S(\gamma,\xi) + \arccos(1-4(S-1)\sqrt{\delta})+ \sqrt{\delta} + 8\delta.
    \end{equation}
    Let
    \begin{equation}
      \alpha_1:=\angle px'\xi + \sqrt{\delta}\quad\text{and}\quad  \alpha_2:=\angle_S(\gamma,\xi) + \arccos(1-4(S-1)\sqrt{\delta})+\sqrt{\delta}.
    \end{equation}
    By the definition of the angle viewed from a fixed point $\angle px'\gamma=\lim_{t\to 0}\tilde{\angle}px'\gamma(t)$, we can choose $y'\in \gamma$ close to $x'$ such that
    \begin{equation}\label{eq:Verification_WQC_II_App_001}
      \tilde{\angle}px'y'>\alpha,\quad |x'y'|<|x'x|,\quad \text{and}\quad  |x'y'|/|px'|<\delta/2.
    \end{equation}
    
    In the following, we construct an auxiliary comparison triangle in $\mathbb{R}^2$ to derive a contradiction.
    Let $a,b\in \mathbb{R}^2$ be two points such that $|oa|_2=|px'|, |ob|_2=|x'y'|$ and $\tilde{\angle}aob=\alpha$, where $|\cdot|_2$ denotes the distance induced by the standard Euclidean norm on $\mathbb{R}^2$.
    Since $\alpha\in (\pi-4\delta, \pi)$, the auxiliary comparison triangle $\tilde{\Delta}aob$ is non-degenerate.
    Furthermore, by the third inequality in \eqref{eq:Verification_WQC_II_App_001} and our choice of $\alpha$, we have
    \begin{equation}\label{eq:Verification_WQC_II_App_002}
      \tilde{\angle}oab<\delta\quad \text{and}\quad \tilde{\angle}oba<4\delta.
    \end{equation}
    The hypothesis \eqref{eq:Verification_WQC_II_App_00} gives $\alpha_1+\alpha_2<\pi-8\delta$.
    Therefore, at most one of $\alpha_1$ and $\alpha_2$ can exceed $\pi/2-4\delta$.
    We now distinguish several cases.
    \begin{enumerate}[fullwidth, label=\textbf{\textit{Case \arabic*}}.]
      \item $\alpha_2\geq \pi/2 - 4\delta$.
      Then it follows that $\alpha_1<\pi/2-4\delta$, and therefore by \eqref{eq:Verification_WQC_II_App_002}, we have $\alpha_1 + \tilde{\angle}oab < \pi/2-4\delta + \delta<\pi/2-3\delta$.
      Choose $c\in \mathbb{R}^2$ to be the point on the geodesic segment $[ab]$ such that $\tilde{\angle}aoc=\alpha_1$.
      By elementary plane trigonometry and the Euclidean law of sines to the triangle $\tilde{\Delta}obc$, it follows that
      \begin{equation}
        \frac{|oc|_2}{\sin \tilde{\angle}obc}=\frac{|ob|_2}{\sin \tilde{\angle}ocb}=\frac{|ob|_2}{\sin(\alpha_1 + \tilde{\angle}oab)}.
      \end{equation}
      Note that $\alpha_1=\angle px'\xi + \sqrt{\delta}\geq \sqrt{\delta}$.
      Thus, by the inequality $1/2\leq \sin x/x\leq 1$ and the monotonicity of the sine function on $[0,\pi/2]$, it follows that
      \begin{equation}
        \frac{|oc|_2}{|ob|_2}=\frac{\sin\tilde{\angle}obc}{\sin(\alpha_1+\tilde{\angle}oab)}
        \leq
        \frac{\sin \tilde{\angle}oba}{\sin \alpha_1}
        \leq
        \frac{4\delta}{\sin\sqrt{\delta}}
        \leq
        8\sqrt{\delta}.
      \end{equation}
      The inequality above, together with the assumption on the value of $\delta$, implies that $|oc|_2\leq|ob|_2=|x'y'|\leq|x'x|$.
      Therefore, we can find a point $z'\in \xi$ such that $|x'z'|=|oc|_2$.
      In particular, we have $|x'z'|/|x'y'|\leq 8\sqrt{\delta}$, which further implies that
      \begin{equation}
        \delta_S\left(|x'z'|;|x'y'|\right)
        =
        \arccos\left(1-\frac{(S-1)|x'z'|}{2|x'y'|}\right)
        \leq
        \arccos\left(1-4(S-1)\sqrt{\delta}\right).
      \end{equation}
      By the almost comparison inequality (Lemma~\ref{lem:angle_viewed_from_fixed_point} \ref{item:almost_comparison}), the monotonicity of $S$-upper angles (Lemma~\ref{lemma:monotonicity_angle_view_point}) and the inequality \eqref{eq:Verification_WQC_II_App_000}, we have
      \begin{equation}\label{eq:Verification_WQC_II_App_1}
        \tilde{\angle}aoc=\alpha_1=\angle px'\xi + \sqrt{\delta}
        \geq
        \angle px'\xi + 2\delta
        \geq
        \angle px'\xi + \delta_S(|x'z'|;|px'|)
        \geq
        \tilde{\angle}px'z',
      \end{equation}
      and
      \begin{multline}\label{eq:Verification_WQC_II_App_2}
        \tilde{\angle}boc
        =\alpha -\alpha_1>\alpha_2
        =
        \angle_S(\gamma,\xi) + \arccos(1-4(S-1)\sqrt{\delta}) + \sqrt{\delta}\\
        \geq
        \angle y'x'\xi + \delta_S(|x'z'|;|x'y'|)
        \geq
        \tilde{\angle} y'x'z'.
      \end{multline}
      These inequalities imply that $|ac|_2\geq |pz'|$ and $|bc|_2\geq |y'z'|$.
      Therefore, by our construction of the auxiliary triangle $\tilde{\Delta}aob$ and the hypothesis that $\tilde{\angle}px'y'>\alpha$, we have
      \begin{equation}
        |py'|
        >
        |ab|_2
        =
        |ac|_2 + |bc|_2
        \geq
        |pz'| + |z'y'|,
      \end{equation}
      which contradicts the triangle inequality of distance.

      \item $\alpha_2<\pi/2 -4\delta$.
      In this case, we have $\alpha_2+ \tilde{\angle}oba<\pi/2$.
      Choose $c\in \mathbb{R}^2$ to be the point on the geodesic segment $[ab]$ such that $\tilde{\angle}boc=\alpha_2$.
      By applying the Euclidean law of sines to the plane triangle $\tilde{\Delta}obc$, we obtain
      \begin{equation}
        \frac{|oc|_2}{\sin \tilde{\angle}obc} = \frac{|ob|_2}{\sin\tilde{\angle}ocb}
        =
        \frac{|ob|_2}{\sin\left(\pi-\alpha_2 - \tilde{\angle}oba\right)}.
      \end{equation}
      Note that $\alpha_2 \geq \sqrt{\delta}$.
      Therefore, by the inequality $1/2\leq \sin x/x\leq 1$ and the monotonicity of the sine function on $[0,\pi/2]$, we have
      \begin{equation}
        \frac{|oc|_2}{|ob|_2}
        =
        \frac{\sin\tilde{\angle}obc}{\sin\left(\pi-\alpha_2 - \tilde{\angle}oba\right)}
        =
        \frac{\sin\tilde{\angle}oba}{\sin\left(\alpha_2 + \tilde{\angle}oba\right)}
        \leq
        \frac{\sin(4\delta)}{\sin \sqrt{\delta}}
        \leq
        8\sqrt{\delta}.
      \end{equation}
      Thus, by the same argument as Case 1, we can find a point $z'\in \xi$ such that $|x'z'|=|oc|_2$ and $|x'z'|/|x'y'|\leq 8\sqrt{\delta}$.
      By the same arguments as in \eqref{eq:Verification_WQC_II_App_1} and \eqref{eq:Verification_WQC_II_App_2}, we have
      \begin{equation}
        \tilde{\angle}aoc
        =\alpha-\alpha_2
        >\alpha_1
        \geq
        \tilde{\angle}px'z',\quad \text{and}\quad
        \tilde{\angle}boc
        =\alpha_2
        \geq
        \tilde{\angle}y'x'z',
      \end{equation}
      which further implies that $|py'|>|pz'|+|z'y'|$.
      This contradicts the triangle inequality of distance.
    \end{enumerate}
    
    In both cases, we arrive at a contradiction. 
    Therefore, the relaxed triangle inequality \eqref{eq:Verification_WQC_II_App_0} must hold.

    \item We now establish the estimate \eqref{eq:verification_WQC_II}.
    By the almost comparison inequality and the relaxed triangle inequality \eqref{eq:Verification_WQC_II_App_0}, it follows that
    \begin{multline}\label{eq:Verification_WQC_II_App_3}
      \tilde{\angle}px'y + \tilde{\angle}px'z + \tilde{\angle}yx'z
      \leq
      \angle px'\gamma + 2\delta + \angle px'\eta + 2\delta + \angle yx'\eta + 2\delta\\
      \leq
      \angle px'\xi + \angle px'\eta + \angle yx'\eta + \angle_S(\gamma,\xi)\\ 
      + \arccos(1-4(S-1)\sqrt{\delta}) + 2\sqrt{\delta} + 14\delta.
    \end{multline}
    Note that by the sum of adjacent angles (Lemma~\ref{lem:angle_viewed_from_fixed_point}~\ref{item:sum_adjacent_angles}), we have $\angle px'\xi + \angle px'\eta \leq \pi$.
    On the other hand, by the monotonicity of $S$-upper angles (Lemma~\ref{lemma:monotonicity_angle_view_point}) and the sum of adjacent angles, it follows that
    \begin{equation}
      \angle yx'\eta + \angle_S(\gamma,\xi)
      \leq
      \angle_S(\gamma,\eta) + \angle_S(\gamma,\xi)
      =
      \lim_{s\searrow 0}\left(\angle\gamma(s)x'\eta + \angle \gamma(s)x'\xi\right)
      \leq
      \pi.
    \end{equation}
    Plugging these two inequalities into \eqref{eq:Verification_WQC_II_App_3}, we obtain
    \begin{equation}
      \tilde{\angle}px'y + \tilde{\angle}px'z + \tilde{\angle}yx'z
      \leq
      2\pi + \arccos\left(1-4(S-1)\sqrt{\delta}\right) + 2\sqrt{\delta} + 14\delta.
    \end{equation}
    Note that the inequality above holds for all $x'\in \eta_{xz}$ close to $x$.
    Thus, by the continuity of comparison angles, we obtain the desired estimate \eqref{eq:verification_WQC_II}.

    Finally, for the last claim, given any $0<\varepsilon\leq \pi$, let $\bar{\delta}:=\bar{\delta}(\varepsilon,S)$ be the largest positive number such that $0<\bar{\delta}\leq 1/64$, $4(S-1)\sqrt{\bar{\delta}}\leq 1$, and $\arccos(1-4(S-1)\sqrt{\bar{\delta}}) + 2\sqrt{\bar{\delta}} + 14\bar{\delta}\leq \varepsilon$.
    From \eqref{eq:verification_WQC_II} and the monotonicity of the inverse cosine function, our claim follows.
  \end{enumerate}
\end{proof}
\begin{remark}[Modified weak quadruple condition]
  In fact, $S$-concave spaces with $S\geq 1$ satisfy a more flexible variant of the $(\varepsilon,\delta)$-weak quadruple condition.
  All assumptions in Definition~\ref{def:weak_quadruple_condition} remain unchanged, except that the relative distance condition \eqref{eq:weak_quadruple_condition_1} is replaced by
  \begin{equation}
    \max\{|xz|/|xy|, |xy|/|px|\}\leq \delta,
  \end{equation}
  which does not involve the error function $\delta_S$.
  This modified weak quadruple condition can be formulated for general metric spaces beyond the class of $S$-concave spaces and is stable under Gromov--Hausdorff convergence within the class of geodesic spaces.
  In the setting of $S$-concave Busemann concave spaces, however, it is more convenient to use the weak quadruple condition formulated in terms of $\delta_S$. 
  We therefore do not pursue this modified version further in the present paper.
\end{remark}
\begin{remark}
  It is unclear to us whether, for $S$-concave Busemann concave spaces without additional geometric assumptions, the sum of the comparison angles of quadruples with other configurations of points, such as when the near-collinearity condition is imposed on the two points closest to the base point, can still be bounded above by $2\pi$ up to a controlled error term.
\end{remark}

\medskip

\section{Strainers and strainer maps}\label{sect:strainers}
\noindent
In this section, we introduce variants of strainers and strainer maps for $S$-concave Busemann concave spaces.
We then establish several basic properties of these strainer maps, including $\varepsilon$-openness and the bi-Lipschitz property.
Finally, we prove a self-improvement property of strainers, which is needed for the study of the dimension of $S$-concave Busemann concave spaces.

\medskip

The notation in this section involves several small parameters.
For ease of reference, we summarize their roles in Table~\ref{tab:parameter_bookkeeping}.
The precise assumptions are restated in the corresponding propositions and lemmas.
\begin{table}[htbp]
  \centering
  \small
  \begin{tabularx}{\textwidth}{|>{\raggedright\arraybackslash}p{0.17\textwidth}|>{\raggedright\arraybackslash}p{0.27\textwidth}|>{\raggedright\arraybackslash}X|}
    \hline
    Symbol & Role & Where it is used \\
    \hline
    $S$ 
    & Uniform smoothness constant in the $S$-concavity inequality 
    & Controls the error function $\delta_S$ and the admissible weak quadruple parameters obtained in Proposition \ref{lemma:verification_WQC_II}. \\
    \hline
    $\tilde{\varepsilon}$ and $\tilde{\delta}$ 
    & Angle-excess and straightness parameters in the weak quadruple condition 
    & Enter the estimates for almost orthogonality, openness, bi-Lipschitz, and self-improvement properties. \\
    \hline
    $\delta$ and $\alpha$ 
    & Straightness and almost orthogonality parameters of an initial strainer 
    & Specify the initial $(k,\delta,\alpha)$-strainer before the improvement procedure. \\
    \hline
    $\delta_k$ 
    & Dimension-dependent smallness threshold 
    & Chosen in Proposition~\ref{prop:epsilon_openness_strainer_maps} to ensure that the induction establishing the openness of $k$-strainer maps closes. \\
    \hline
    $\varepsilon_k$ and $\bar{\varepsilon}_k$ 
    & Openness (or co-Lipschitz) constants of strainer maps 
    & Determined in Proposition~\ref{prop:epsilon_openness_strainer_maps} and Corollary~\ref{cor:epsilon_openness_strainer_maps}.\\
    \hline
    $\delta'$ and $\varepsilon'$ 
    & Improved straightness and angle-excess parameters 
    & Used in Lemma~\ref{lemma:almost_self_improvement} in the strengthened dequeuing-and-enqueuing procedure. \\
    \hline
  \end{tabularx}
  \caption{
  Parameters used in the strainer theory and their roles.}
  \label{tab:parameter_bookkeeping}
\end{table}

\subsection{Definitions}\label{subsect:strainer_definitions}
\noindent
\begin{definition}[$(1,\delta)$-strainer]\label{def:1_strainer}
  Let $X$ be an $S$-concave space with $S\geq 1$, and $x\in X$.
  Given $0<\delta<1/2$, we say a point $p\in X\setminus \{x\}$ is a \emph{$(1,\delta)$-strainer at $x$} if there exists a point $q\in X\setminus \{x\}$ such that $|qx|< |px|, \delta_S(|qx|;|px|)<\delta$, and the Euclidean comparison angle $\tilde{\angle}pxq> \pi -\delta$.
  In this case, we refer to the point $q$ as an \emph{opposite strainer of $p$ at $x$}, and refer to the pair $(p,q)$ as a \emph{$(1,\delta)$-strainer pair}.
  We say that $x$ is a $(1,\delta)$-strained point if it admits a $(1,\delta)$-strainer at itself.
\end{definition}

In the following, we introduce a slightly finer notion of $(k,\delta)$-strainer, called \emph{$(k,\delta,\alpha)$-strainers}, in which the parameter $\delta$ controls the straightness of strainer pairs, while the parameter $\alpha$ controls the orthogonality between different strainer pairs.
\begin{definition}[$(k,\delta,\alpha)$-strainers]\label{def:k_strainer}
  Let $X$ be an $S$-concave space with $S\geq 1$, and let $0<\delta,\alpha<1/2$.
  Given $k\in \mathbb{N}, k\geq 2$, we call a $k$-tuple $(p_1,\ldots, p_k)$ of points in $X$ a \emph{$(k,\delta,\alpha)$-strainer at $x\in X$} if the following inductive conditions hold:
  \begin{enumerate}[label=(\arabic*)]
    \item The tuple $(p_1,\ldots,p_{k-1})$ is a $(k-1,\delta, \alpha)$-strainer at $x$. We denote its opposite strainer by $(q_1,\ldots,q_{k-1})$;
    \item $p_k$ is a $(1,\delta)$-strainer at $x$, with $|q_{k-1}x|>|p_kx|$ and $\delta_S(|p_kx|;|q_{k-1}x|)<\delta$;
    \item There exists an opposite strainer $q_k$ of $p_k$ at $x$ such that
    \begin{equation}\label{eq:k_strainer}
      \begin{gathered}
        \tilde{\angle}p_ixp_k>\pi/2 -\alpha,\quad \tilde{\angle}p_ixq_k>\pi/2 -\alpha,\\
        \tilde{\angle}q_ixp_k>\pi/2-\alpha,\quad \tilde{\angle}q_ixq_k>\pi/2-\alpha,
      \end{gathered}
    \end{equation}
    for all $i=1,\ldots, k-1$.
  \end{enumerate}
  In this case, we call the $k$-tuple $(q_1,\ldots,q_k)$ an \emph{opposite strainer} of $(p_1,\ldots,p_k)$ at $x$.
  A point $x\in X$ is called a \emph{$(k,\delta,\alpha)$-strained point} if it admits a $(k,\delta,\alpha)$-strainer at itself.
  A $k$-tuple $(p_1,\ldots,p_k)$ is called a $(k,\delta,\alpha)$-strainer on a subset $U\subset X$ if it is a $(k,\delta,\alpha)$-strainer at each point in $U$.
  In this case, we call the map
  \begin{equation}
    f(\cdot):=\left(\mathsf d_{p_1}(\cdot),\ldots, \mathsf d_{p_k}(\cdot)\right): U\subset X \to \mathbb{R}^k
  \end{equation}
  a \emph{$(k,\delta,\alpha)$-strainer map} on $U$ associated with $(p_1,\ldots,p_k)$, where $\mathsf d_{p}(\cdot):=\mathsf d(p,\cdot)$.
  For notational uniformity, when $k=1$ we use $(k,\delta,\alpha)$-strainer simply to mean a $(1,\delta)$-strainer; in this case, the parameter $\alpha$ plays no role.
\end{definition}
\begin{remark}\label{rmk:asymmetry_strainer_maps}
  We emphasize that the $(k,\delta,\alpha)$-strainers in Definition~\ref{def:k_strainer} are generally non-symmetric, in the sense that the definition only imposes orthogonality of each coordinate of the map $f=(\mathsf d_{p_1},\ldots, \mathsf d_{p_k})$ with respect to the lower-index coordinates.
  This phenomenon, which arises from the asymmetry of angles viewed from a point, is one of the main challenges in non-Riemannian geometry. 
  It appears even in normed spaces, for instance in the context of Birkhoff--James orthogonality in Banach spaces; see Remark~\ref{rmk:asymmetry_angles_viewed_from_fixed_point}.
  Indeed, symmetric Birkhoff--James orthogonality is equivalent to an inner product structure on Banach spaces of dimension at least three.
  We refer to \cite[Example 4.6, Remark 5.8]{han2025structure} for further discussion.
\end{remark}

In the remainder of this subsection, we establish the openness and almost orthogonality of $(k,\delta,\alpha)$-strainers, and show that the degree of orthogonality of strainers is controlled, in a certain sense, by the straightness parameter and the angle-excess parameter in the weak quadruple condition.

\begin{proposition}[Openness and almost orthogonality of strainer maps]\label{prop:almost_orthogonality}
  Let $X$ be an $S$-concave Busemann concave space with $S\geq 1$, and let $\tilde{\varepsilon}\in [0,\pi]$ and $\tilde{\delta}\in (0,\pi]$ be such that $X$ satisfies the $(\tilde{\varepsilon},\tilde{\delta})$-weak quadruple condition.
  Suppose that $\pmb{p}:=(p_1,\ldots,p_k)$ is a $(k,\delta,\alpha)$-strainer at some point $x\in X$ with opposite strainer $\pmb{q}:=(q_1,\ldots,q_k)$, where $\alpha>0$ and $\delta\leq \tilde{\delta}$.
  Then there exists an open neighborhood $U$ of $x$ such that $\pmb{p}$ is a $(k,\delta,\alpha)$-strainer on $U$ with common opposite strainer $\pmb{q}$. Moreover, for every \(1\le i<j\le k\), every \(z\in U\), and every choice of unit-speed geodesics
$\eta_j,\xi_j$ from $z$ to $p_j,q_j$, respectively, one has
  \begin{equation}\label{eq:almost_orthogonality}
    \begin{gathered}
      \left|\angle p_iz\eta_j -\pi/2\right|< 2\delta + \alpha + \tilde{\varepsilon},\quad \left|\angle p_iz\xi_j -\pi/2\right|< 2\delta + \alpha + \tilde{\varepsilon},\\
      \left|\angle q_iz\eta_j -\pi/2\right|< 2\delta + \alpha + \tilde{\varepsilon},\quad \left|\angle q_iz\xi_j -\pi/2\right|< 2\delta + \alpha + \tilde{\varepsilon}.
    \end{gathered}
  \end{equation}
\end{proposition}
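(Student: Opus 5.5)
Openness is handled first and is routine. All conditions in Definition~\ref{def:k_strainer} are strict inequalities: $|q_ix|<|p_ix|$, $\delta_S(|q_ix|;|p_ix|)<\delta$, $\tilde{\angle}p_ixq_i>\pi-\delta$, the nesting conditions $|q_{i-1}x|>|p_ix|$ with $\delta_S(|p_ix|;|q_{i-1}x|)<\delta$, and the finitely many inequalities \eqref{eq:k_strainer}. Each involves only distances from the base point to the finitely many fixed points $p_i,q_i$ (and the fixed distances $|p_iq_j|$, etc.), through continuous functions. Comparison angles and $\delta_S$ are continuous away from degenerate configurations, and $x\notin\{p_i,q_i\}$. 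Hence every inequality persists for $z$ in a small ball $U$ around $x$, with the same $\pmb q$. This gives the first claim.

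For almost orthogonality, fix $z\in U$ and $i<j$, and treat the first inequality; the others are symmetric after swapping the roles of $p_i,q_i$ and of $p_j,q_j$. The lower bound is straightforward. By Lemma~\ref{lem:angle_viewed_from_fixed_point}\ref{item:almost_comparison}, $\tilde{\angle}p_iz\eta_j(t)\le \angle p_iz\eta_j+\delta_S(t;|p_iz|)$ for small $t$. A plain lower bound from comparison angles at scale $t$ is not directly available, so instead use $\angle p_iz\eta_j=\lim_t\tilde{\angle}p_iz\eta_j(t)$ together with a comparison-angle inequality at finite scale. The plan is to apply the $(\tilde\varepsilon,\tilde\delta)$-weak quadruple condition to the quadruple $(z;p_i,q_i,w)$ with $w=\eta_j(t)$, $t$ small. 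The hypotheses hold: $\tilde{\angle}p_izq_i>\pi-\delta\ge\pi-\tilde\delta$, $|zw|\le|zq_i|\le|zp_i|$, $\delta_S(|zq_i|;|zp_i|)<\delta\le\tilde\delta$, and $\delta_S(t;|zq_i|)\to 0$. This gives
\[
\tilde{\angle}p_izw+\tilde{\angle}q_izw\le 2\pi+\tilde\varepsilon-\tilde{\angle}p_izq_i<\pi+\delta+\tilde\varepsilon .
\]
Next, $\tilde{\angle}q_izw$ must be bounded below by roughly $\pi/2-\alpha$. I would use Busemann concavity and the monotonicity of comparison angles along $\eta_j$ from $z$ toward $p_j$: the comparison angle $\tilde{\angle}q_izp_j>\pi/2-\alpha$ should transfer down to small scales up to an error $\delta$. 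This is done with the first variation formula \ref{item:first_variation} combined with $S$-concavity along $\eta_j$, giving $\angle q_iz\eta_j\ge\tilde{\angle}q_izp_j-\delta_S(|zp_j|;|zq_i|)$, and the nesting condition bounds $\delta_S(|zp_j|;|zq_i|)<\delta$. Subtracting then gives $\angle p_iz\eta_j<\pi/2+\alpha+2\delta+\tilde\varepsilon$. Applying the same argument with $p_i,q_i$ swapped yields $\angle p_iz\eta_j>\pi/2-\alpha-\delta$.

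I expect the main obstacle to be this finite-to-infinitesimal lower bound $\angle q_iz\eta_j\gtrsim\tilde{\angle}q_izp_j-\delta$. The almost comparison inequality runs the wrong way, bounding finite comparison angles above by the angle viewed from the point. What is needed is the reverse inequality for the far endpoint $p_j$. The first attempt will be the $S$-concavity inequality applied to $p=q_i$ along the geodesic $\eta_j$. It gives
\[
|q_i\eta_j(t)|^2\ge(1-\tau)|q_iz|^2+\tau|q_ip_j|^2-S\tau(1-\tau)|zp_j|^2,\qquad \tau=t/|zp_j|.
\]
Differentiating at $\tau=0$ yields $-\cos\angle q_iz\eta_j\ge (|q_ip_j|^2-|q_iz|^2-S|zp_j|^2)/(2|q_iz||zp_j|)$. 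This is $-\cos\tilde{\angle}q_izp_j$ minus an error $(S-1)|zp_j|/(2|q_iz|)$, which is precisely the quantity controlled by $\delta_S(|zp_j|;|q_iz|)<\delta$ via the nesting $|p_jz|<|q_{j-1}z|\le\cdots$. Converting this cosine estimate into an angle estimate is exactly where the error function's $\arccos$ form matches. If the chain of nesting inequalities does not directly give $\delta_S(|zp_j|;|zq_i|)<\delta$ for $i<j-1$, I would use the monotonicity of $\delta_S$ in its second argument together with $|zq_i|>|zp_{i+1}|>|zq_{i+1}|$.
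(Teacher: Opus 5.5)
Your proposal is correct and follows the paper's proof: openness by continuity, the lower bound $\angle q_iz\eta_j\ge\tilde{\angle}q_izp_j-\delta_S(|zp_j|;|zq_i|)>\pi/2-\alpha-\delta$ via the nesting of distances and monotonicity of $\delta_S$, and the upper bound from the $(\tilde\varepsilon,\tilde\delta)$-weak quadruple condition on $(z;p_i,q_i,\eta_j(t))$ with $t\searrow 0$. Your ``main obstacle'' is not one: Lemma~\ref{lem:angle_viewed_from_fixed_point}\ref{item:almost_comparison} bounds the finite comparison angle above by the angle viewed from $z$, which is exactly this lower bound at $t=|zp_j|$, and the paper uses it there. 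Your $S$-concavity plus first-variation computation just re-proves it at that scale (harmlessly sidestepping the $t_0$ restriction), so the Busemann-concavity detour is unnecessary.
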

\begin{proof}
  First, by continuity of the comparison angles and of the error function $\delta_S$, it is clear that there exists an open neighborhood $U$ of $x$ such that $\pmb{p}$ is a $(k,\delta,\alpha)$-strainer at every point of $U$ with common opposite strainer $\pmb{q}$.
  We now prove the almost orthogonality \eqref{eq:almost_orthogonality}.
  Let $z\in U$ be an arbitrary point, and $\eta_j,\xi_j$ be any unit-speed geodesics from $z$ to $p_j$ and $q_j$, respectively for $1\leq j\leq k$.
  Let $1\leq i<j\leq k$ be fixed.
  For the lower bound of \eqref{eq:almost_orthogonality}, by the monotonicity of the error function $\delta_S$ and the requirement in the definition of strainers that $|p_iz|>|q_iz|>|p_jz|>|q_jz|$, it follows that
  \begin{equation}
    \begin{gathered}
      \delta_S\left(|q_jz|;|p_iz|\right)
      \leq
      \delta_S\left(|p_jz|;|p_iz|\right)
      \leq
      \delta_S\left(|p_jz|; |q_iz|\right)
      <
      \delta,\\
      \delta_S\left(|q_jz|;|q_iz|\right)
      \leq
      \delta_S\left(|q_jz|;|p_jz|\right)< \delta.
    \end{gathered}
  \end{equation}
  Thus, by the almost comparison inequality (Lemma~\ref{lem:angle_viewed_from_fixed_point}~\ref{item:almost_comparison}), we obtain that
  \begin{equation}\label{eq:almost_orthogonality_1}
    \begin{gathered}
    \angle p_i z\eta_j \geq \tilde{\angle}p_izp_j - \delta_S(|p_jz|;|p_iz|)
    > 
    \frac{\pi}{2} -\alpha - \delta,\\
    \angle p_i z\xi_j \geq \tilde{\angle}p_izq_j - \delta_S(|q_jz|;|p_iz|)
    >
    \frac{\pi}{2} - \alpha -\delta.
  \end{gathered}
  \end{equation}
  By the same argument, we also obtain that $\angle q_iz\eta_j>\pi/2-\alpha-\delta$ and $\angle q_iz\xi_j>\pi/2-\alpha-\delta$.

  For the upper bound of \eqref{eq:almost_orthogonality}, pick up an arbitrary $t\in (0,l(\eta_j))$. 
  Since $\delta\leq \tilde{\delta}$, by applying the $(\tilde{\varepsilon}, \tilde{\delta})$-weak quadruple condition to the quadruple $(z;p_i,q_i, \eta_j(t))$, we have that
  \begin{equation}
    \tilde{\angle}p_iz \eta_j(t) + \tilde{\angle}q_iz\eta_j(t)
    \leq
    2\pi + \tilde{\varepsilon} - \tilde{\angle}p_izq_i
    \leq
    \pi + \tilde{\varepsilon} + \delta.
  \end{equation}
  Together with the definition of angles viewed from a point \eqref{eq:angle_viewed_from_fixed_point} and the lower bound \eqref{eq:almost_orthogonality_1}, we obtain that
  \begin{equation}
    \begin{gathered}
      \angle p_iz\eta_j
      \leq
      \pi + \tilde{\varepsilon} + \delta - \angle q_i z\eta_j
      \leq
      \frac{\pi}{2} + \tilde{\varepsilon} + 2\delta + \alpha,\\
      \angle q_iz\eta_j
      \leq
      \pi + \tilde{\varepsilon} + \delta - \angle p_iz\eta_j
      \leq
      \frac{\pi}{2} + \tilde{\varepsilon} + 2\delta + \alpha.
    \end{gathered}
  \end{equation}
  By applying the same argument to the quadruple $(z;p_i,q_i,\xi_j(s))$ for $s\in (0,l(\xi_j))$, we also obtain the same upper bound for $\angle p_iz\xi_j$ and $\angle q_iz\xi_j$.
  Thus, our claim follows.

\end{proof}

\subsection{Openness and bi-Lipschitz property of strainer maps}\label{subsect:openness_bilipschitz}

\noindent
We first briefly recall the definition of $\varepsilon$-openness; see \cite{lytchak2006open, lytchak2019geod} for more details.
\begin{definition}[$\varepsilon$-open maps]\label{def:epsilon_open_map}
    A Lipschitz map $F: U\to Y$ from an open subset $U\subset X$ to a metric space $Y$ is said to be \emph{$\varepsilon$-open} if for any point $x\in U$, there exists $r>0$ such that the closed ball $\bar{B}(x,\varepsilon^{-1}r)\subset U$ is complete, and for any $v\in B(F(x),r)\subset Y$, there exists a point $y\in U$ such that $F(y)=v$ and $\varepsilon|yx|\leq |F(x)v|$.
    In particular, for any $s\in (0,r]$, we have the inclusion $B(F(x),s)\subset F(B(x,\varepsilon^{-1}s))$.
\end{definition}

We now prove that $(k,\delta,\alpha)$-strainer maps are open whenever the combined size of the parameters $\delta$ and $\alpha$ of strainers, together with the angle excess $\tilde{\varepsilon}$ of the weak quadruple condition, is small.
Our argument follows the same strategy as in \cite{fujioka2025top} and \cite{han2025structure}: we equip the target space with an anisotropic $\ell_1$-norm to compensate for the asymmetry of strainer maps and apply a criterion for $\varepsilon$-open maps from a locally complete metric space to a geodesic space; see \cite[Lemma 5.11]{han2025structure}.
The proof closely parallels the arguments in our previous work \cite[Proposition 5.13]{han2025structure}, which rely only on the almost orthogonality of strainer maps established in \cite[Lemma 5.7]{han2025structure}, rather than local semi-convexity itself.
We therefore provide only a brief sketch here, replacing that lemma with the almost orthogonality (Proposition~\ref{prop:almost_orthogonality}) established in the present paper.

\begin{proposition}[$\varepsilon$-openness]\label{prop:epsilon_openness_strainer_maps}
  Let $X$ be an $S$-concave Busemann concave space with $S\geq 1$, and $k\in \mathbb{N}$.
  There exists a constant $\delta_k:=k^{-1}2^{-2k}$ such that the following holds: let $\tilde{\varepsilon}\in [0,\delta_k),\tilde{\delta}\in (0,\pi]$ be given constants such that $X$ satisfies the $(\tilde{\varepsilon},\tilde{\delta})$-weak quadruple condition.
  Let $f:U\to \mathbb{R}^k$ be a $(k,\delta,\alpha)$-strainer map from an open subset $U\subset X$ to $\mathbb{R}^k$, where $\mathbb{R}^k$ is equipped with the $\ell_1$-norm, with the parameters $\delta,\alpha$ satisfying $\delta\leq \tilde{\delta}$ and $2\delta + \alpha + \tilde{\varepsilon} \leq \delta_k$.
  Then $f$ is $\varepsilon_k(\tilde{\varepsilon},\delta,\alpha):=(1-2\delta-\alpha-\tilde{\varepsilon})/4^{k-1}$-open.
\end{proposition}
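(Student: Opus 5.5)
We will apply the openness criterion \cite[Lemma 5.11]{han2025structure}: a Lipschitz map $F:U\to Y$ from a locally complete space into a geodesic space $Y$ is $\varepsilon$-open as soon as, for every $x\in U$ and every $v\in Y$ near $F(x)$ with $v\neq F(x)$, there is a point $y$ near $x$ with
\begin{equation*}
  |F(y)v|\le |F(x)v|-\varepsilon|xy|.
\end{equation*}
This is a local descent condition. Iterating it and using completeness of small closed balls produces a preimage of $v$ within distance $\varepsilon^{-1}|F(x)v|$ of $x$. Accordingly, it suffices to construct, at each $x\in U$ and for each target $v=(v_1,\dots,v_k)\ne f(x)$, a direction along which $\|f(\cdot)-v\|_{\ell_1}$ decreases at rate at least $\varepsilon_k|xy|$.

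\textbf{Anisotropic norm.} Because the strainers are non-symmetric (Remark~\ref{rmk:asymmetry_strainer_maps}), we work with a weighted norm $\|w\|:=\sum_{j=1}^k 4^{-(j-1)}|w_j|$, which is comparable to the $\ell_1$-norm up to the factor $4^{k-1}$. Equivalently, we prove openness for this weighted norm with constant $1-2\delta-\alpha-\tilde\varepsilon$ and then lose the factor $4^{k-1}$.

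\textbf{Choice of direction.} Let $j$ be the \emph{smallest} index for which the weighted discrepancy $4^{-(j-1)}|f_j(x)-v_j|$ dominates the tail. If $v_j<f_j(x)$, move along a geodesic $\eta_j$ from $x$ toward $p_j$; if $v_j>f_j(x)$, move along $\xi_j$ toward $q_j$. In the second case, the first variation formula (Lemma~\ref{lem:angle_viewed_from_fixed_point}~\ref{item:first_variation}) together with $\angle p_j x\xi_j\ge\tilde\angle p_jxq_j-\delta_S>\pi-2\delta$ shows that $\mathsf d_{p_j}$ increases at rate at least $\cos 2\delta$. In the first case, $\mathsf d_{p_j}$ decreases at rate $1$.

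For the other coordinates $i\ne j$ we use the first variation formula and Proposition~\ref{prop:almost_orthogonality}, which give $|\frac{d}{dt}\mathsf d_{p_i}|\le\sin(2\delta+\alpha+\tilde\varepsilon)\le 2\delta+\alpha+\tilde\varepsilon$. This holds \emph{only for $i<j$}, because orthogonality is controlled only toward lower indices. For $i>j$ we have merely the trivial bound $1$, since distance functions are $1$-Lipschitz; this is exactly where the weights $4^{-(i-1)}$ enter.

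\textbf{Main obstacle.} The hard part is the bookkeeping that makes the induction close, which is why $\delta_k=k^{-1}2^{-2k}$ appears. The higher coordinates carry total weight at most $\sum_{i>j}4^{-(i-1)}\le\frac13 4^{-(j-1)}$. The lower coordinates contribute at most $(j-1)(2\delta+\alpha+\tilde\varepsilon)\le k\delta_k$. Both losses must be dominated by the gain $4^{-(j-1)}(1-2\delta-\alpha-\tilde\varepsilon)$.

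The selection of $j$ also has to handle targets whose $i$-th coordinate already sits near its goal. If one of these coordinates crosses $v_i$, it cannot increase the norm by more than its weighted rate. Passing from the infinitesimal rate to a small but finite step uses the $\limsup$ in the first variation formula, applied in the open set $U$ supplied by Proposition~\ref{prop:almost_orthogonality}.

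With these estimates, the criterion yields openness with constant $(1-2\delta-\alpha-\tilde\varepsilon)/4^{k-1}$. The argument otherwise follows \cite[Proposition 5.13]{han2025structure} verbatim, with \cite[Lemma 5.7]{han2025structure} replaced by Proposition~\ref{prop:almost_orthogonality}.
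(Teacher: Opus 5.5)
You take a genuinely different route from the paper, and your descent step itself is sound. However, your bookkeeping does not deliver the stated constant $\varepsilon_k$.

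\textbf{How the routes differ.} The paper argues by induction on $k$ with the norm $\|v_{[k-1]}\|_1+\frac{\varepsilon_{k-1}}{2}|v_k|$. If $f_{[k-1]}(x)\neq v_{[k-1]}$, it uses the inductive $\varepsilon_{k-1}$-openness of $f_{[k-1]}$, paying at most $\frac{\varepsilon_{k-1}}{2}|xy|$ in the last coordinate. Only otherwise does it move along $\eta_k$ or $\xi_k$. You instead use one fixed weighting and a single step along $\eta_j$ or $\xi_j$.

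\textbf{Where the constant is lost.} Since $\sum_j4^{-(j-1)}|w_j|\le\|w\|_1$, passing from your norm to $\ell_1$ costs nothing. So your sentence about proving weighted openness with constant $1-2\delta-\alpha-\tilde\varepsilon$ and then losing the factor $4^{k-1}$ is wrong. In fact, weighted openness with any constant above $4^{-(k-1)}$ is impossible: a preimage $y$ of $v=f(x)+se_k$ has $|xy|\ge s$, while the weighted distance is $4^{-(k-1)}s$.

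Hence $\varepsilon_k$ must come directly from a weighted descent rate strictly larger than $\varepsilon_k$, as the criterion requires. Consider the case $j=k\ge 2$ with $v_{[k-1]}=f_{[k-1]}(x)$. Your estimates only guarantee the rate $4^{-(k-1)}-\sin(2\delta+\alpha+\tilde\varepsilon)\sum_{i<k}4^{-(i-1)}$. Nothing in Proposition~\ref{prop:almost_orthogonality} prevents the lower coordinates from moving at nearly that speed. Because $\sum_{i<k}4^{-(i-1)}\ge 1$, this rate is strictly smaller than $4^{-(k-1)}(1-2\delta-\alpha-\tilde\varepsilon)$. Your ``Main obstacle'' paragraph only shows that the rate is positive. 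That yields openness with a constant such as $4^{-(k-1)}(1-2\delta-\alpha-\tilde\varepsilon)-4^{-k}$, not $\varepsilon_k$.

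\textbf{The missing ingredient: a buffer on the small weights.} In the paper the new coordinate carries weight $\varepsilon_{k-1}/2=2\varepsilon_k$. After the loss $(k-1)(2\delta+\alpha+\tilde\varepsilon)\le 4^{-k}$, the rate is therefore still above $\varepsilon_k$, and this is where $\delta_k=k^{-1}2^{-2k}$ enters. The other case gives rate $\varepsilon_{k-1}/2$. The factor $4$ per level is exactly this halving times this buffer.

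The same repair works in your flat scheme, for example with weights $1,2\cdot 4^{-1},\dots,2\cdot 4^{-(k-1)}$. Checking $j=1$, $2\le j\le k-1$ and $j=k$ separately gives a rate exceeding $4^{-(k-1)}>\varepsilon_k$.

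Finally, your selection rule for $j$ is unnecessary. The criterion allows arbitrarily short steps, so any $j$ with $f_j(x)\neq v_j$ works with a step shorter than $|f_j(x)-v_j|$. This is because $\big||f_i(y)-v_i|-|f_i(x)-v_i|\big|\le|f_i(y)-f_i(x)|$ for every $i$.
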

\begin{proof}
  Let $(p_1,\ldots,p_k)$ be the $(k,\delta,\alpha)$-strainer associated with $f$.
  For each $i=1,\ldots,k$, set $f_i:=\mathsf d_{p_i}$ and $f_{[i]}:=(f_1,\ldots,f_i)$.
  For simplicity, we write $\varepsilon_k:=\varepsilon_k(\tilde{\varepsilon},\delta,\alpha)$ throughout the rest of the proof.
  We show the claim by induction on $i$.
  \begin{enumerate}[fullwidth, label=\textbf{Step \arabic*}:]
    \item We first show $f_1$ is $\varepsilon_1$-open.
    Note that the argument in Step~1 of \cite[Proposition 5.13]{han2025structure} uses only the almost comparison inequality and the first variation formula, namely Lemma~\ref{lem:angle_viewed_from_fixed_point}~\ref{item:almost_comparison} and~\ref{item:first_variation}, and does not rely on local semi-convexity.
    Therefore, the same argument shows that $f_1$ is $(1-2\delta)$-open as a map from $U$ to $\mathbb{R}$.
    Take $\varepsilon_1:=1-2\delta-\alpha-\tilde{\varepsilon}$ for $\delta\leq \tilde{\delta}$ and $2\delta + \alpha + \tilde{\varepsilon} \leq \delta_1:=1/4$.
    Since $\varepsilon_1\leq 1-2\delta$, Definition~\ref{def:epsilon_open_map} implies that $f_1$ is also $\varepsilon_1$-open.

    \item Suppose that the $(k-1, \delta,\alpha)$-strainer map $f_{[k-1]}$ is $\varepsilon_{k-1}$-open from $U$ to $\mathbb{R}^{k-1}$ equipped with the $\ell_1$-norm, where $\varepsilon_{k-1}=(1-2\delta-\alpha-\tilde{\varepsilon})/4^{k-2}$.
    Let $\eta_k,\xi_k$ be any unit-speed geodesics from $x$ to $p_k$ and to $q_k$, respectively.
    For $v\in \mathbb{R}^k$, let the anisotropic norm $\|\cdot\|$ on $\mathbb{R}^k$ be defined as:
    \begin{equation}\label{eq:anisotropic_norm}
      \|v\|:=\left\|v_{[k-1]}\right\|_1 + \frac{\varepsilon_{k-1}}{2}\left|v_k\right|,\quad v=(v_1,\ldots,v_k)\in \mathbb{R}^k,
    \end{equation}
    where $v_{[k-1]}$ is the first $k-1$ coordinates of $v$, and $\|\cdot\|_1$ is the usual $\ell_1$-norm on $\mathbb{R}^{k-1}$.
    Let $x\in U$ be fixed.
    By the criterion for $\varepsilon$-open maps (see \cite[Lemma 5.11]{han2025structure}), it suffices to show that for any $v\in \mathbb{R}^k\setminus \{f(x)\}$ sufficiently close to $f(x)$, we can find $y\in U$ such that
    \begin{equation}
      \left|f(y)v\right| - \left|f(x)v \right| \leq -\varepsilon'_k |xy|,
    \end{equation}
    for some $\varepsilon'_k>\varepsilon_k$.
    As in \cite[Proposition 5.13]{han2025structure}, we divide the argument into two cases.
    We emphasize that both cases in \cite[Proposition 5.13]{han2025structure} rely only on the almost orthogonality of strainer maps, and no local semi-convexity is used there.
    We therefore omit some repeated computations.
    \begin{enumerate}[fullwidth, label=\textbf{Case \arabic*:}]
      \item $f_{[k-1]}\neq v_{[k-1]}$.
      By the same argument as in Case~1 in Step~2 of \cite[Proposition 5.13]{han2025structure}, we can find $y\in U$ such that
      \begin{equation}
        \|f(y)-v\| - \|f(x)- v\| \leq -\frac{\varepsilon_{k-1}}{2}|xy|.
      \end{equation}

      \item $f_k\neq v_k$.
      Following the same argument as Step~1 of \cite[Proposition 5.13]{han2025structure}, we can find $y\in U$ lying on one of the geodesics $\eta_k$ and $\xi_k$, such that
      \begin{equation}\label{eq:openness_strainer_maps_2_1}
        f_k(y)=v_k,\quad (1-2\delta -\alpha - \tilde{\varepsilon})|xy|\leq
        |f_k(x)-v_k|.
      \end{equation}
      Note that from \eqref{eq:openness_strainer_maps_2_1}, we see that $y$ is close to $x$ whenever $v$ is close to $f(x)$.
      From the almost orthogonality of strainer maps (Proposition~\ref{prop:almost_orthogonality}), we have that
      \begin{equation}\label{eq:openness_strainer_maps_2_2}
        \left|\cos \angle p_ix\eta_k\right|
        <
        2\delta + \alpha + \tilde{\varepsilon},\quad
        \left|\cos \angle p_ix\xi_k\right|
        <
        2\delta + \alpha + \tilde{\varepsilon},\quad \text{for } i=1,\ldots,k-1.
      \end{equation}
      The inequality \eqref{eq:openness_strainer_maps_2_2}, together with the first variation formula (Lemma~\ref{lem:angle_viewed_from_fixed_point}~\ref{item:first_variation}), implies that $|f_i(y)-f_i(x)|\leq (2\delta + \alpha + \tilde{\varepsilon})|xy|$ for $i=1,\ldots,k-1$, whenever $v$ is close to $f(x)$.
      Thus, following exactly the same argument as in Case~2 in Step~2 of \cite[Proposition 5.13]{han2025structure}, we obtain that
      \begin{equation}
        \| f(y)-v\| - \|f(x) -v\|
        \leq
        -\left(\frac{\varepsilon_{k-1}}{2}(1-2\delta-\alpha -\tilde{\varepsilon}) - (2\delta + \alpha + \tilde{\varepsilon})(k-1)\right)|xy|.
      \end{equation}
      Let $\delta_k:=k^{-1}2^{-2k}$.
      One can readily check that for $\delta,\alpha$ satisfying $\delta\leq \tilde{\delta}$ and $\tilde{\varepsilon} + 2\delta + \alpha\leq \delta_k$, it holds that
      \begin{equation}
        \| f(y) -v\| - \| f(x) - v\|
        \leq
        -\frac{5\varepsilon_{k-1}}{16}|xy|
        <
        -\frac{\varepsilon_{k-1}}{4}|xy|.
      \end{equation}
    \end{enumerate}

    Combining Cases 1 and 2, we obtain
    $\|f(y)-v\|-\|f(x)-v\|\leq -\varepsilon'_k|xy|$ whenever $v$ is close to $f(x)$, where $\varepsilon'_k:=\varepsilon_{k-1}/2$.
    By the criterion for $\varepsilon_k$-open maps (see \cite[Lemma 5.11]{han2025structure}) and the fact that the $\ell_1$-norm dominates the anisotropic norm $\|\cdot\|$ defined in \eqref{eq:anisotropic_norm}, we obtain that $f$ is $\varepsilon_{k-1}/4$-open as a map from $U$ to $\mathbb{R}^{k}$ equipped with the $\ell_1$-norm.
    By induction, we conclude that $f$ is $\varepsilon_k$-open from $U$ to $\mathbb{R}^k$ equipped with the $\ell_1$-norm, with $\varepsilon_k:=(1-2\delta-\alpha-\tilde{\varepsilon})/4^{k-1}$ for $\delta\leq \tilde{\delta} $ and $\tilde{\varepsilon} + 2\delta + \alpha\leq \delta_k$.
  \end{enumerate}
\end{proof}

The following corollary is an immediate consequence of Propositions~\ref{prop:almost_orthogonality} and~\ref{prop:epsilon_openness_strainer_maps}.

\begin{corollary}\label{cor:epsilon_openness_strainer_maps}
    Let $X$ be an $S$-concave Busemann concave space with $S\geq 1$.
    Let $k\in \mathbb{N}$ and $\tilde{\varepsilon}\in [0,\delta_k)$ and $\tilde{\delta}\in (0,\pi]$ be given constants such that $X$ satisfies the $(\tilde{\varepsilon},\tilde{\delta})$-weak quadruple condition.
    Suppose that $(p_1,\ldots,p_k)$ is a $(k,\delta,\alpha)$-strainer at $x\in X$ with an opposite strainer $(q_1,\ldots,q_k)$, where the parameters satisfy $\delta\leq \tilde{\delta}$ and $2\delta+\alpha+\tilde{\varepsilon}\leq \delta_k$.
    Then there exists an open neighborhood $U$ of $x$ such that the associated $(k,\delta,\alpha)$-strainer map $f:U\to \mathbb{R}^k$, where $\mathbb{R}^k$ is equipped with the standard Euclidean norm, is $\sqrt{k}$-Lipschitz and $\bar{\varepsilon}_k$-open, where $\bar{\varepsilon}_k:= \varepsilon_k(\tilde{\varepsilon},\delta,\alpha)/\sqrt{k}$, and $\delta_k,\varepsilon_k$ are given in Proposition~\ref{prop:epsilon_openness_strainer_maps}.
\end{corollary}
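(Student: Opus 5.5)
The corollary combines three things: the neighbourhood from Proposition~\ref{prop:almost_orthogonality}, the $\ell_1$-openness from Proposition~\ref{prop:epsilon_openness_strainer_maps}, and the comparison between the $\ell_1$- and Euclidean norms on $\mathbb{R}^k$.

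\textbf{Choice of $U$.} By Proposition~\ref{prop:almost_orthogonality}, using $\delta\le\tilde\delta$, there is an open neighbourhood $U$ of $x$ on which $(p_1,\ldots,p_k)$ is a $(k,\delta,\alpha)$-strainer with common opposite strainer $(q_1,\ldots,q_k)$. Thus $f=(\mathsf d_{p_1},\ldots,\mathsf d_{p_k})$ is a $(k,\delta,\alpha)$-strainer map on $U$. Since $X$ is complete, after shrinking $U$ to a small open ball we may also assume the local completeness required in Definition~\ref{def:epsilon_open_map}.

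\textbf{Lipschitz bound.} Each coordinate $\mathsf d_{p_i}$ is $1$-Lipschitz by the triangle inequality. Hence
\[
|f(y)-f(z)|_2=\Bigl(\sum_{i=1}^k(\mathsf d_{p_i}(y)-\mathsf d_{p_i}(z))^2\Bigr)^{1/2}\le\sqrt{k}\,|yz|,
\]
so $f$ is $\sqrt{k}$-Lipschitz into Euclidean $\mathbb{R}^k$.

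\textbf{Openness.} The hypotheses $\tilde\varepsilon\in[0,\delta_k)$, $\delta\le\tilde\delta$ and $2\delta+\alpha+\tilde\varepsilon\le\delta_k$ are exactly those of Proposition~\ref{prop:epsilon_openness_strainer_maps}. Therefore $f\colon U\to(\mathbb{R}^k,\|\cdot\|_1)$ is $\varepsilon_k$-open with $\varepsilon_k=\varepsilon_k(\tilde\varepsilon,\delta,\alpha)$. We pass to the Euclidean norm using $\|w\|_2\le\|w\|_1\le\sqrt{k}\,\|w\|_2$. Fix $y\in U$ and let $r>0$ be the $\ell_1$-radius from Definition~\ref{def:epsilon_open_map}. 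Set $r':=r/\sqrt{k}$.

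For any $v$ with $|f(y)v|_2<r'$ we have $\|f(y)-v\|_1<r$. Openness then gives a point $z\in U$ with $f(z)=v$ and
\[
\varepsilon_k|yz|\le\|f(y)-v\|_1\le\sqrt{k}\,|f(y)v|_2,
\]
that is, $\bar\varepsilon_k|yz|\le|f(y)v|_2$ with $\bar\varepsilon_k=\varepsilon_k/\sqrt{k}$. The completeness requirement is also met. Since $\bar\varepsilon_k^{-1}r'=\varepsilon_k^{-1}r$, the ball $\bar B(y,\bar\varepsilon_k^{-1}r')$ coincides with $\bar B(y,\varepsilon_k^{-1}r)$, which is complete by the $\ell_1$-openness.

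There is no real obstacle. The only point needing care is the bookkeeping of radii when changing norms, so that both the ball-completeness condition and the displacement estimate carry over with the same constant.
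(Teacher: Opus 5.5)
Your proof is correct and takes the same route as the paper. The paper simply declares the corollary an immediate consequence of Proposition~\ref{prop:almost_orthogonality} (to obtain the neighbourhood $U$ on which $f$ is a strainer map) and Proposition~\ref{prop:epsilon_openness_strainer_maps} (for $\ell_1$-openness), and your norm comparison $\|w\|_2\le\|w\|_1\le\sqrt{k}\,\|w\|_2$, together with the rescaled radius $r/\sqrt{k}$, supplies exactly the bookkeeping it leaves implicit.
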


Next, we establish the bi-Lipschitz property of strainer maps when the parameters $\delta$ and $\alpha$ of strainers, together with the angle-excess parameter $\tilde{\varepsilon}$ of the weak quadruple condition, are sufficiently small.
The key step is to establish the injectivity of strainer maps on an open subset where higher-dimensional strainers do not exist.
In the absence of local semi-convexity, we cannot apply the argument in \cite[Proposition 5.15]{han2025structure}, whose proof relies on the local geometry of locally semi-convex, $S$-concave spaces, where the sum of adjacent angles viewed from a common point equals $\pi$ (see \cite[Lemma 4.8 and 4.11]{han2025structure}).
Instead, we follow an approach similar to that of \cite[Lemma 5.6 and Corollary 5.7]{burago1992ad}, based on the weak quadruple condition, the upper bound for the sum of adjacent angles viewed from a common point (Lemma~\ref{lem:angle_viewed_from_fixed_point}~\ref{item:sum_adjacent_angles}), and the distance-angle estimates from the configuration of strainers.

\begin{proposition}[Bi-Lipschitz homeomorphism]\label{prop:bi_Lipschitz}
    Let $X$ be an $S$-concave Busemann concave space with $S\geq 1$.
    Let $k\in \mathbb{N}$, and $\tilde{\varepsilon}\in [0,\delta_k), \tilde{\delta}\in (0,\pi]$ be given constants such that $X$ satisfies the $(\tilde{\varepsilon}, \tilde{\delta})$-weak quadruple condition.
    Let $\pmb{p}=(p_1,\ldots,p_k)$ be a $(k,\delta,\alpha)$-strainer at $x_0\in X$ with $\delta\leq \tilde{\delta}$ and $2\delta + \alpha + \tilde{\varepsilon} \leq \delta_k$, with an opposite strainer $\pmb{q}=(q_1,\ldots,q_k)$\footnote{We take $\alpha=0$ in the case $k=1$.}.
    Suppose that there is an open neighborhood $V$ of $x_0$ such that no point in $V$ admits a $(k+1, \delta, \max\{5\delta+2\tilde{\varepsilon},\alpha\})$-strainer.
    Then the associated strainer map $f=(\mathsf d_{p_1},\ldots,\mathsf d_{p_k})$ is a bi-Lipschitz homeomorphism from some open neighborhood of $x_0$ to a domain in $\mathbb{R}^k$.
\end{proposition}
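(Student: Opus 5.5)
The argument has three ingredients: openness, a Lipschitz bound, and a lower distance bound that is the real content. By Corollary~\ref{cor:epsilon_openness_strainer_maps}, after shrinking to a neighborhood $U\subset V$ of $x_0$, the map $f$ is $\sqrt{k}$-Lipschitz and $\bar{\varepsilon}_k$-open on $U$, with $\pmb{p}$ a strainer on $U$ with common opposite strainer $\pmb q$ (Proposition~\ref{prop:almost_orthogonality}). Openness gives that the image of small open sets is open. Once we have a uniform lower bound $|f(y)f(z)|\ge c|yz|$ for $y,z$ in a smaller neighborhood $W$, it follows that $f|_W$ is injective, continuous and open. Hence $f|_W$ is a bi-Lipschitz homeomorphism onto the domain $f(W)$. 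So the whole task is the lower distance bound. Following \cite[Lemma 5.6, Corollary 5.7]{burago1992ad}, I would argue by contradiction via the nonexistence of $(k+1)$-strainers.

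Suppose there are $y\neq z$ in a tiny ball $B(x_0,r)$ with $|f(y)f(z)|<c|yz|$, where $c$ is a small constant fixed later. Take $r\ll |q_kx_0|$, so that the ratio $|yz|/|q_k y|$ is as small as we like. I would then show that $z$ (or $y$) is a $(k+1,\delta,\max\{5\delta+2\tilde\varepsilon,\alpha\})$-strainer at $y$ extending $\pmb p$, contradicting the hypothesis on $V$. This requires three things.

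First, an opposite point $w$ must be produced. Extend backwards: take a point $w$ with $\tilde\angle zyw>\pi-\delta$ and $|yw|<|yz|$ so that $\delta_S(|yw|;|yz|)<\delta$. To find it, I would choose $w$ on a geodesic from $y$ toward a suitable point near $y$ on the "other side". Since Busemann concave spaces need not be extendable, the cleaner route is to swap roles and take the point $m$ near $y$ on the geodesic $[zy]$ extended. If that is unavailable, I would pick $y$ as an interior point: replace the pair $(y,z)$ by the midpoint configuration, i.e. use $z$ and $y$ as strainer and opposite strainer at the point $m$ on a geodesic $[yz]$ at a small distance from $y$. Then $\tilde\angle z m y=\pi$ automatically, and the scale condition $\delta_S(|my|;|mz|)<\delta$ holds when $|my|/|mz|$ is small. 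The distance conditions $|q_km|>|zm|$ and $\delta_S(|zm|;|q_km|)<\delta$ follow from $r$ small.

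Second, the orthogonality conditions \eqref{eq:k_strainer} for the new pair $(z,y)$ at $m$ against each $(p_i,q_i)$ must be verified. The first variation formula (Lemma~\ref{lem:angle_viewed_from_fixed_point}~\ref{item:first_variation}) together with $|f_i(y)-f_i(z)|<c|yz|$ gives $|\cos\angle p_im\xi|\lesssim c$ along the geodesic $\xi$ toward $z$; the almost comparison inequality then converts this into comparison angles. The weakness is that only the angles \emph{viewed from} $p_i$ are controlled, not the comparison angles at $m$ from the new points, so I would proceed as follows. One comparison angle, say $\tilde\angle p_imz$, is bounded below by $\pi/2-\delta-O(c)$ through the Euclidean law of cosines and the near-constancy of $\mathsf d_{p_i}$ along $[yz]$. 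For the upper bound I would apply the $(\tilde\varepsilon,\tilde\delta)$-weak quadruple condition to $(m;p_i,q_i,z)$, then to $(m;p_i,q_i,y)$, using $\tilde\angle p_imq_i>\pi-\delta$. Combined with the lower bounds for $q_i$ (obtained the same way), this yields $\tilde\angle p_imz,\tilde\angle q_imz\in(\pi/2-\alpha',\pi/2+\alpha')$. Here $\alpha'$ comes out of the form $2\delta+\tilde\varepsilon+\ldots$, and the bookkeeping should give $\alpha'\le 5\delta+2\tilde\varepsilon$ after choosing $c$ small relative to $\delta$.

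Third, and this is the main obstacle: the weak quadruple condition applies only to hierarchical configurations $|xz|\le|xy|\le|px|$ with the appropriate $\delta_S$ smallness. The new strainer points sit at the smallest scale, which fits the requirement, but the strainer definition also demands the additional distance condition between consecutive levels. I must choose $m$ so that all these scale constraints hold simultaneously while $c$ stays fixed. If the direct choice fails, I would fall back on choosing $z'$ on a geodesic from $y$ to $z$ at a scale tuned to $\delta_S$, using Busemann concavity to pass the near-constancy of $f$ from $z$ to $z'$. With the contradiction established, $c$ depends only on $k,\delta,\alpha,\tilde\varepsilon$, which completes the proof.
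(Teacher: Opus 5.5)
Your architecture matches the paper's. You argue by contradiction, take $m$ on a geodesic $[yz]$ very close to $y$, use $(z,y)$ as the new $(1,\delta)$-strainer pair at $m$, get $|\tilde{\angle}p_imz-\pi/2|<\delta/2$ from the Euclidean law of cosines, and recover the remaining angles from the weak quadruple condition. Proving the quantitative bound $|f(y)f(z)|\ge c|yz|$ with $c$ of order $\delta$, instead of injectivity plus \cite[Theorem 5.30]{fujioka2025top} as the paper does, is a harmless variant: $\bigl||p_im|-|p_iz|\bigr|\le |my|+c|yz|$ keeps the law-of-cosines estimate intact.

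The gap is in your second step. Condition~\eqref{eq:k_strainer} demands \emph{lower} bounds on all four angles $\tilde{\angle}p_imz$, $\tilde{\angle}q_imz$, $\tilde{\angle}p_imy$, $\tilde{\angle}q_imy$, and the law of cosines only delivers the first. For the $q_i$-angles it does not apply, because $f$ carries no information about $\mathsf d_{q_i}$. The straightness $\tilde{\angle}p_imq_i>\pi-\delta$ pins down $|p_iq_i|$ only up to errors of order $\delta^2|q_im|\gg|yz|$, so the lower bounds for $q_i$ cannot be ``obtained the same way''. For the short side $y$, the quotient $(|p_im|-|p_iy|)/|my|$ is uncontrolled, so even $\tilde{\angle}p_imy$ is out of reach. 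The weak quadruple condition applied at $m$ to $(m;p_i,q_i,w)$, $w\in\{y,z\}$, only bounds $\tilde{\angle}p_imw+\tilde{\angle}q_imw$ from \emph{above}. Together with a lower bound on one summand, this yields an upper bound on the other, never a lower bound. Separately, your first-variation claim is unjustified: the chord slope of $\mathsf d_{p_i}$ over $[yz]$ does not control its derivative at $m$, since semi-concavity only gives a one-sided estimate.

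Two ingredients are missing.
\begin{itemize}
\item A lower bound $\tilde{\angle}p_imw+\tilde{\angle}q_imw>\pi-2\delta-\tilde{\varepsilon}$. Since $|mw|\le 0.01\delta\min\{|p_im|,|q_im|\}$, the thin-triangle estimates $\tilde{\angle}p_imw+\tilde{\angle}p_iwm>\pi-\delta/2$ and $\tilde{\angle}q_imw+\tilde{\angle}q_iwm>\pi-\delta/2$ hold. Add them, and apply the weak quadruple condition at the \emph{other} vertex, to $(w;p_i,q_i,m)$. This is legitimate because $(p_i,q_i)$ is still a strainer pair at $w$.
\item The sum of adjacent angles viewed from a common point (Lemma~\ref{lem:angle_viewed_from_fixed_point}~\ref{item:sum_adjacent_angles}), combined with the almost comparison inequality. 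This gives $\tilde{\angle}p_imy+\tilde{\angle}p_imz<\pi+\delta/2$, and likewise for $q_i$.
\end{itemize}
The chain is then as follows. From $\tilde{\angle}p_imz\approx\pi/2$ and the first ingredient, $\tilde{\angle}q_imz>\pi/2-5\delta/2-\tilde{\varepsilon}$. Adjacency then gives $\tilde{\angle}p_imy<\pi/2+\delta$ and $\tilde{\angle}q_imy<\pi/2+3\delta+\tilde{\varepsilon}$. The sum lower bound then gives $\tilde{\angle}p_imy>\pi/2-5\delta-2\tilde{\varepsilon}$ and $\tilde{\angle}q_imy>\pi/2-3\delta-\tilde{\varepsilon}$, which is where the constant $5\delta+2\tilde{\varepsilon}$ comes from.

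Your ``third obstacle'' is not one: the scale hypotheses are met simply by taking the ball small and $m$ close to $y$.
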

\begin{proof}
  It suffices to show that $f$ is injective in some neighborhood of $x_0$ (see \cite[Theorem 5.30]{fujioka2025top}).
  By Proposition~\ref{prop:almost_orthogonality}, we can choose $r_0>0$ small such that $B(x_0, 2r_0)\subset V$, $\pmb{p}$ is a $(k,\delta,\alpha)$-strainer on $B(x_0,2r_0)$ with the common opposite strainer $\pmb{q}$, and
  \begin{equation}\label{eq:bi_Lipschitz_00}
    \max\{4r_0/|q_kx_0|, \delta_S(4r_0;|q_kx_0|)\}<0.01\delta.
  \end{equation}

  We show by contradiction that $f$ is injective on $U:=B(x_0,r_0)$.
  Suppose the claim does not hold.
  Then there exist two distinct points $x,y\in U$ such that $f(x)=f(y)$.
  Let $\gamma:[0,l]\to X$ be a unit-speed geodesic from $x$ to $y$, where $l:=l(\gamma)\leq 2r_0$.
  We show that there exists $z\in \gamma$ close to $x$, such that $(p_1,\ldots,p_k, y)$ is a $(k+1, \delta, \max\{5\delta+2\tilde{\varepsilon}, \alpha\})$-strainer at $z$, which contradicts the assumption.
  \begin{enumerate}[fullwidth, label=\textbf{Step \arabic*: }]
    \item Choose $z\in \gamma$ such that 
    \begin{equation}\label{eq:bi_Lipschitz_001}
        \max\{|xz|/|yz|, \delta_S(|xz|;|yz|)\}<\delta/8.
    \end{equation}
    Note that the assumptions of the lemma imply $\delta<1/2$.
    It is also clear that $\tilde{\angle}yzx =\pi>\pi -\delta$.
    Therefore, by Definition~\ref{def:1_strainer}, it follows that $z$ is a $(1,\delta)$-strained point with the strainer pair $(y,x)$.
    It remains to check the almost orthogonality of $\tilde{\angle}p_izy, \tilde{\angle}q_izy, \tilde{\angle}p_izx$ and $\tilde{\angle}q_izx$ for $i=1,\ldots,k$.

    \item In this step, we show an auxiliary result that for any $z'\in \gamma$, the strainer pair $(p_i,q_i)$ together with $y$ and $x$ satisfies the following angle-sum bounds:
    \begin{equation}\label{eq:bi_Lipschitz_0}
      \left|\tilde{\angle}p_iz'y + \tilde{\angle}q_iz'y -\pi\right|< 2\delta + \tilde{\varepsilon},\quad
      \left|\tilde{\angle}p_iz'x + \tilde{\angle}q_iz'x - \pi\right|<2\delta + \tilde{\varepsilon},
    \end{equation}
    for all $i=1,\ldots,k$.
    Let $i\in \{1,\ldots,k\}$ be fixed. 
    For notational simplicity, we denote $p:=p_i$ and $q:=q_i$.
    For the upper bound of \eqref{eq:bi_Lipschitz_0}, by our choice of $r_0$, it follows that $p$ is a $(1,\delta)$-strainer at $z'$ with the opposite strainer $q$.
    Thus, by applying the $(\tilde{\varepsilon}, \tilde{\delta})$-weak quadruple condition to the quadruple $(z'; p,q,y)$, we obtain that
    \begin{equation}\label{eq:bi_Lipschitz_1}
      \tilde{\angle}pz'y + \tilde{\angle}qz'y
      \leq
      2\pi + \tilde{\varepsilon} - \tilde{\angle}pz'q
      <
      2\pi + \tilde{\varepsilon} - (\pi-\delta)
      <
      \pi + \tilde{\varepsilon} + \delta.
    \end{equation}
    For the lower bound of \eqref{eq:bi_Lipschitz_0}, from the inequality \eqref{eq:bi_Lipschitz_00}, one can readily check that
    \begin{equation}
      \max\{|z'y|, |z'x|\}\leq 0.01\delta\min\{|pz'|,|qz'|\},
    \end{equation}
    from which we obtain that
    \begin{equation}\label{eq:bi_Lipschitz_2}
      \tilde{\angle}pz' y + \tilde{\angle}pyz'> \pi -\delta/2,\quad
      \tilde{\angle}qz'y + \tilde{\angle}qyz' > \pi -\delta/2.
    \end{equation}
    Summing up the inequalities in \eqref{eq:bi_Lipschitz_2}, we obtain that
    \begin{equation}\label{eq:bi_Lipschitz_3}
      \tilde{\angle}pz' y + \tilde{\angle}pyz' + \tilde{\angle}qz'y + \tilde{\angle}qyz'>2\pi - \delta.
    \end{equation}
    On the other hand, by applying the $(\tilde{\varepsilon}, \tilde{\delta})$-weak quadruple condition to the quadruple $(y;p,q,z')$, we have that
    \begin{equation}
      \tilde{\angle}pyz' + \tilde{\angle}qyz'
      \leq
      2\pi + \tilde{\varepsilon} - \tilde{\angle}pyq
      <
      \pi + \tilde{\varepsilon} + \delta.
    \end{equation}
    Plugging this inequality into the inequality \eqref{eq:bi_Lipschitz_3}, we obtain that
    \begin{equation}\label{eq:bi_Lipschitz_4}
      \tilde{\angle}pz' y + \tilde{\angle}qz'y
      >
      2\pi - \delta - \left(\tilde{\angle}pyz' + \tilde{\angle}qyz'\right)
      >
      \pi - 2\delta - \tilde{\varepsilon}.
    \end{equation}
    Together with \eqref{eq:bi_Lipschitz_1} and \eqref{eq:bi_Lipschitz_4}, we obtain the first inequality in \eqref{eq:bi_Lipschitz_0}.
    By the same argument, we can obtain the second inequality in \eqref{eq:bi_Lipschitz_0}, and therefore our claim follows.

    \item
    We now show the desired almost orthogonality of $\tilde{\angle}pzy, \tilde{\angle}qzy, \tilde{\angle}pzx$ and $\tilde{\angle}qzx$.
    For the first two comparison angles, let $\eta$ and $\xi$ denote the unit-speed geodesic segments of $\gamma$ from $z$ to $y$ and to $x$, respectively.
    By applying the Euclidean law of cosines to the comparison triangle $\tilde{\Delta}pzy$, it follows that
    \begin{multline}
      \left|\cos\tilde{\angle}pzy\right|
      =
      \left|\frac{|pz|^2+|zy|^2-|py|^2}{2|pz||zy|}\right|
      \leq
      \frac{|zy|}{2|pz|} + \frac{(|pz|+|px|)\big||pz|-|px|\big|}{2|pz||zy|}\\
      \leq
      \frac{r_0}{2(1-0.01\delta)|px|} + \frac{(2+0.01\delta)|px||xz|}{2(1-0.01\delta)|px||zy|}
      \leq
      0.1\delta + 1.1\,\frac{|xz|}{|zy|},
    \end{multline}
    where we use the hypothesis $|px|=|py|$ in the first inequality, and the last two inequalities follow from the choice of $r_0$ and \eqref{eq:bi_Lipschitz_00}.
    From our choice of $z\in \gamma$, namely \eqref{eq:bi_Lipschitz_001}, we obtain that $|\cos \tilde{\angle}pzy|<\delta/4$, which implies that $|\tilde{\angle}pzy - \pi/2|<\delta/2$.
    By taking $z':=z$ in the first inequality in \eqref{eq:bi_Lipschitz_0}, we obtain that $|\tilde{\angle}qzy-\pi/2|< 5\delta/2 + \tilde{\varepsilon}$.
    For the latter two comparison angles, note that from the almost comparison inequality, the sum of adjacent angles viewed from a common point (Lemma~\ref{lem:angle_viewed_from_fixed_point}~\ref{item:almost_comparison},~\ref{item:sum_adjacent_angles}) and the choice of $r_0$, we have that
    \begin{equation}
      \tilde{\angle}pzy + \tilde{\angle}pzx
      \leq
      \angle pz \eta + \angle pz\xi + \delta_S(|zy|;|pz|) + \delta_S(|zx|; |pz|)
      <
      \pi + \delta/2.
    \end{equation}
    Plugging the almost orthogonality $|\tilde{\angle}pzy-\pi/2|<\delta/2$ into the above inequality, we have that
    \begin{equation}
      \tilde{\angle}pzx
      \leq
      \pi + \delta/2 - \tilde{\angle}pzy
      <
      \pi/2 +\delta.
    \end{equation}
    The same argument, together with the almost orthogonality that $|\tilde{\angle}qzy-\pi/2|<5\delta/2 + \tilde{\varepsilon}$, implies that
    \begin{equation}
      \tilde{\angle}qzx
      \leq
      \pi + \delta/2 -\tilde{\angle}qzy
      <
      \pi/2 + 3\delta + \tilde{\varepsilon}.
    \end{equation}
    By taking $z':=z$ in the second inequality in \eqref{eq:bi_Lipschitz_0}, we obtain that 
    \begin{equation}
      \begin{gathered}
        \tilde{\angle}pzx
        >
        \pi - 2\delta - \tilde{\varepsilon} - \tilde{\angle}qzx
        >
        \frac{\pi}{2} - 5\delta - 2\tilde{\varepsilon},\\
        \tilde{\angle}qzx
        >
        \pi-2\delta - \tilde{\varepsilon}- \tilde{\angle}pzx
        >
        \frac{\pi}{2} - 3\delta - \tilde{\varepsilon}.
      \end{gathered}
    \end{equation}
  \end{enumerate}

  To sum up, we have shown that $y$ is a $(1,\delta)$-strainer at $z$ with $x$ as the opposite strainer, and that $\tilde{\angle}p_izy,\tilde{\angle}p_izx, \tilde{\angle}q_izy, \tilde{\angle}q_izx$ are greater than $\pi/2- 5\delta -2\tilde{\varepsilon}$ for $i=1,\ldots, k$.
  Thus, by the definition of strainers, it follows that $(p_1,\ldots,p_k,y)$ is a $(k+1, \delta, \max\{5\delta+2\tilde{\varepsilon}, \alpha\})$-strainer at $z\in B(x_0,2r_0)\subset V$.
  But this contradicts our assumption that no point in $V$ admits a $(k+1, \delta, \max\{5\delta + 2\tilde{\varepsilon}, \alpha\})$-strainer. 
  Hence, $f$ is injective on $U$, which completes the proof.
\end{proof}

\subsection{Self-improvement of strainers}\label{subsect:self-improving}

\noindent
We establish a self-improvement property of $(k,\delta,\alpha)$-strainers on $S$-concave Busemann concave spaces. Any $(k,\delta,\alpha)$-strained point whose parameters, together with the angle excess $\tilde{\varepsilon}$ of the weak quadruple condition, are sufficiently small has arbitrarily close $(k,\rho,\rho)$-strained points for every $0<\rho<1/2$.
This property has been established in \cite[Lemma 5.9]{burago1992ad} for Alexandrov spaces with curvature bounded below and in \cite[Lemma 5.16]{han2025structure} for locally semi-convex, $S$-concave Busemann concave spaces.

In \cite{han2025structure}, we used a new `dequeuing-and-enqueuing' approach to handle the asymmetry of strainers in locally semi-convex, $S$-concave Busemann concave spaces.
Although the proof of the self-improvement property here follows the same general strategy, an additional difficulty arises in the present setting: the $(\tilde{\varepsilon}, \tilde{\delta})$-weak quadruple condition can improve a given strainer only to one whose almost orthogonality is no better than the angle-excess parameter $\tilde{\varepsilon}$.

To overcome this difficulty, we adopt a strengthened `dequeuing-and-enqueuing' procedure based on the weak quadruple condition with varying parameters.
Starting from a $(k,\delta,\alpha)$-strainer $(p_1,\ldots,p_k)$, we first dequeue the first strainer pair $(p_1,q_1)$ and enqueue the new strainer pair $(p'_1,q'_1)$ from a straightening procedure, obtaining the $k$-tuple $(p_2,\ldots, p_k,p'_1)$.
This tuple is a mixed strainer: it remains a $k$-dimensional strainer with controlled parameters, while the new last entry $p'_1$, with its opposite strainer $q'_1$, forms a 1-strainer pair with the improved `straightness' parameter.
This improvement and Proposition~\ref{lemma:verification_WQC_II} allow us, in the next step, to apply the weak quadruple condition with a smaller angle-excess parameter $\varepsilon'$. 
This yields better almost orthogonality among newly created strainer pairs, while the weak quadruple condition with the original parameters keeps the almost orthogonality among old and new strainer pairs under control.

After $k$ steps of the strengthened `dequeuing-and-enqueuing' process, the resulting $k$-tuple gives the desired strainer, with improved straightness and almost-orthogonality properties.

\begin{lemma}[Self-improvement of strainers]\label{lemma:almost_self_improvement}
    Let $X$ be an $S$-concave Busemann concave space with $S\geq 1$.
    Let $k\in \mathbb{N}$, and $\tilde{\varepsilon}\in [0,\delta_k/3), \tilde{\delta}\in (0,\pi]$ be given constants such that $X$ satisfies the $(\tilde{\varepsilon}, \tilde{\delta})$-weak quadruple condition.
    Suppose $x\in X$ is a $(k,\delta,\alpha)$-strained point with $\delta\leq \tilde{\delta}$ and $7\delta+\alpha + 3\tilde{\varepsilon}\leq \delta_k$.
    Then, for every open neighborhood $U$ of $x$, every $0<\varepsilon'<\tilde{\varepsilon}$, and every
    \[
      0<\delta'<\min\{\delta,\bar{\delta}(\varepsilon',S)\},
    \]
    where $\bar{\delta}(\varepsilon',S)$ is the constant supplied by Proposition~\ref{lemma:verification_WQC_II}, the neighborhood $U$ contains a $(k,\delta', 5\delta'+ 2\varepsilon')$-strained point.
    In particular, every open neighborhood of $x$ contains a $(k,\rho,\rho)$-strained point for every $0<\rho <1/2$.
\end{lemma}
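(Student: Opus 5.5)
The plan is to run the strengthened dequeuing-and-enqueuing procedure $k$ times, each time replacing the oldest strainer pair by a freshly straightened one. Let $(p_1,\ldots,p_k)$ be a $(k,\delta,\alpha)$-strainer at $x$ with opposite strainer $(q_1,\ldots,q_k)$. By Proposition~\ref{prop:almost_orthogonality} it remains a strainer with the same opposite strainer on a small ball $B(x,r)\subset U$. We shrink $r$ so that every further point chosen stays in $U$ and all relative distances $|\cdot x|$ are tiny compared with $|q_kx|$.

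\textbf{Straightening.} Given a current strainer at a point $x_j$, we dequeue its first pair $(p_1,q_1)$ as follows. Take a point $q'_1$ on a geodesic from $x_j$ to $q_1$ very close to $x_j$, and a point $x_{j+1}$ on a geodesic from $q'_1$ toward $p_1$. Choose $x_{j+1}$ so that $|x_{j+1}q'_1|$ is tiny compared with $|q_kx_j|$ but huge compared with $|x_{j+1}p'_1|$, where $p'_1$ is a further point on the geodesic from $x_{j+1}$ to $q'_1$ chosen so that $\delta_S(|x_{j+1}p'_1|;|q'_1x_{j+1}|)<\delta'$.

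Then the roles are swapped: the new pair is $(q'_1,p'_1)$, enqueued last, with $q'_1$ as the strainer and $p'_1$ as its opposite. Its comparison angle at $x_{j+1}$ equals $\pi$, so it is a $(1,\delta')$-strainer pair. The distance requirements of Definition~\ref{def:k_strainer}(2), namely $|q_kx_{j+1}|>|q'_1x_{j+1}|$ with small $\delta_S$, hold by construction.

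\textbf{Orthogonality between old and new pairs.} For each remaining old pair $(p_i,q_i)$, $i\ge2$, we first control one comparison angle, such as $\tilde\angle p_ix_{j+1}q'_1$. This uses the Euclidean law of cosines together with the almost orthogonality $|\angle p_ix\eta_1-\pi/2|<2\delta+\alpha+\tilde\varepsilon$ from Proposition~\ref{prop:almost_orthogonality} and the first variation formula.

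The remaining three angles are then recovered as in Step~3 of Proposition~\ref{prop:bi_Lipschitz}. We use the sum of adjacent angles (Lemma~\ref{lem:angle_viewed_from_fixed_point}~\ref{item:sum_adjacent_angles}) together with the almost comparison inequality, applied along the segment through $x_{j+1}$. We also use the two-sided angle-sum bounds \eqref{eq:bi_Lipschitz_0}, derived from the $(\tilde\varepsilon,\tilde\delta)$-weak quadruple condition for the old pair. This yields orthogonality error of order $5\delta+2\tilde\varepsilon+\alpha$ between old and new pairs. The hypothesis $7\delta+\alpha+3\tilde\varepsilon\le\delta_k$ keeps the mixed tuple $(p_2,\ldots,p_k,q'_1)$ a valid strainer to which Proposition~\ref{prop:almost_orthogonality} applies at the next step.

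\textbf{Orthogonality among new pairs.} When a later new pair is created, its orthogonality to the earlier new pairs is established the same way, but with the improved data. The earlier new pairs are $(1,\delta')$-straight with $\delta'<\bar\delta(\varepsilon',S)$. By Proposition~\ref{lemma:verification_WQC_II}, $X$ satisfies the $(\varepsilon',\delta')$-weak quadruple condition, so the angle-sum bounds \eqref{eq:bi_Lipschitz_0} hold with $2\delta'+\varepsilon'$. The resulting orthogonality error between new pairs is therefore at most $5\delta'+2\varepsilon'$.

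After $k$ rounds every old pair has been dequeued. The result is a $(k,\delta',5\delta'+2\varepsilon')$-strainer at a point of $U$. For the final claim, choose $\varepsilon'$ and $\delta'$ so small that $\max\{\delta',5\delta'+2\varepsilon'\}<\rho$, and use monotonicity of the definition in its parameters.

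\textbf{Main obstacle.} The hard part is the orthogonality bookkeeping. Each enqueued pair must be checked against every pair present at that time, and the inequalities in \eqref{eq:k_strainer} must be tested only in the non-symmetric direction (new index against lower indices). The delicate point is the first-angle estimate for the new pair relative to the old pairs. Because angles are asymmetric, the Busemann-type ordering $|p_ix|>|q_ix|>\cdots$ must be respected, and this is why the roles of $q'_1$ and $p'_1$ are swapped.
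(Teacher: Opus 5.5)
There is a genuine gap, at exactly the step you flag as delicate: your construction has no mechanism that improves the orthogonality \emph{between} pairs. The new pair $(q'_1,p'_1)$ lies along (a geodesic close to) the old line through $q_1$ and $p_1$. So after $k$ rounds the new pairs point in the old directions and inherit the old defect. Already in $\mathbb{R}^2$, take $p_1,q_1$ far along $\pm e_1$ and $p_2,q_2$ along $\pm(\cos\theta,\sin\theta)$ with $\theta=\pi/2-\alpha/2$. The initial tuple is a $(2,\delta,\alpha)$-strainer, but your output has a comparison angle $\approx\pi/2-\alpha/2$, which is not $>\pi/2-5\delta'-2\varepsilon'$ once $\delta',\varepsilon'$ are small.

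The $(\varepsilon',\delta')$-weak quadruple condition cannot repair this. As in Steps~2--3 of Proposition~\ref{prop:bi_Lipschitz}, it only converts \emph{one} good comparison angle into the other three, and you obtain your first angle ``the same way'', with error $2\delta+\alpha+\tilde\varepsilon$. Worse, that first-angle input is not available at all. Proposition~\ref{prop:almost_orthogonality} controls $\angle p_iz\eta_j$ only for $i<j$. You need the angle viewed from the far point $p_i$, $i\ge 2$, along the geodesic toward $p_1$, which is precisely the uncontrolled Birkhoff--James direction of Remark~\ref{rmk:asymmetry_angles_viewed_from_fixed_point}. In $\ell^2_3$, take $p_1,q_1$ far along $\pm(1,2)$ and $p_2,q_2$ along $\pm(4,-1)$ at a much smaller scale; the initial tuple is an almost perfect $2$-strainer. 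Yet for your mixed tuple $(p_2,q'_1)$, one of $\tilde\angle p_2x_1q'_1$, $\tilde\angle p_2x_1p'_1$ is $\approx\arccos(14/195^{2/3})\approx\pi/2-0.43$, so it is not a strainer for any admissible $\alpha$. Swapping the roles of $q'_1$ and $p'_1$ does not help, since both are near $x_1$ and the relevant angles are still viewed from the far old points.

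The missing idea is to pick each new direction inside a fiber of the \emph{remaining} coordinates of the current strainer map. By the $\bar\varepsilon_k$-openness of that map (Corollary~\ref{cor:epsilon_openness_strainer_maps}), there is $p'_j$ close to the current base point $z_{j-1}$ with all coordinates except the dequeued one unchanged. That is, $|p_iz_{j-1}|=|p_ip'_j|$ for the old $p_i$ still present, and $|p'_iz_{j-1}|=|p'_ip'_j|$ for the earlier new ones. Set $q'_j:=z_{j-1}$ and take $z_j$ on a geodesic from $q'_j$ to $p'_j$ very close to $q'_j$. Every remaining strainer point is then equidistant from $q'_j$ and $p'_j$. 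Hence the law of cosines (as in Step~3 of Proposition~\ref{prop:bi_Lipschitz}) puts the first comparison angle, e.g.\ $\tilde\angle p_iz_jp'_j$, within $\delta'/2$ of $\pi/2$ once the radii are small. This holds independently of $\alpha$ and without using angles viewed from a point in either direction. The two-sided bounds \eqref{eq:bi_Lipschitz_0}, together with the sum of adjacent angles, then give the other three angles. Using the $(\tilde\varepsilon,\tilde\delta)$-weak quadruple condition for old versus new pairs gives error $5\delta+2\tilde\varepsilon$; using the $(\varepsilon',\delta')$-condition for new versus new gives $5\delta'+2\varepsilon'$. This is also why the mixed tuples remain $(k,\delta,\max\{\alpha,5\delta+2\tilde\varepsilon\})$-strainers at every stage instead of accumulating errors. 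Consequently the single hypothesis $7\delta+\alpha+3\tilde\varepsilon\le\delta_k$ suffices throughout.
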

\begin{proof}
  Let $\pmb{p}=(p_1,\ldots,p_k)$ be a $(k,\delta,\alpha)$-strainer at $x$ with the opposite strainer $\pmb{q}=(q_1,\ldots,q_k)$, and let $f$ be the associated strainer map.
  Let $U\subset X$ be an arbitrary open neighborhood of $x$, and let $0<\varepsilon'<\tilde{\varepsilon}$ be an arbitrary positive number.
  By Proposition~\ref{lemma:verification_WQC_II}, we can find $\bar{\delta}:=\bar{\delta}(\varepsilon',S)>0$ such that $X$ satisfies the $(\varepsilon', \delta')$-weak quadruple condition for any $\delta'\in (0,\bar{\delta}]$.
  Let $0<\delta'<\min\{\delta, \bar{\delta}\}$ be an arbitrary number.
  In the following, we show that there exists a $(k, \delta', 5\delta' + 2\varepsilon')$-strained point in $U$.

  For the case $k=1$, the claim is trivial, since we can choose a point $z\in U$ on a unit-speed geodesic from $x$ to $p_1$ sufficiently close to $x$, such that $z$ is a $(1,\delta')$-strained point with the strainer pair $(p_1,x)$.
  Therefore, we can assume $k\geq 2$.
  We will dequeue and enqueue the strainer points $p_i$ from $i = 1$ to $k$ inductively to construct the desired strained point.

  \begin{enumerate}[fullwidth, label=\textbf{Step \arabic*:}]
    \item We first dequeue and enqueue the strainer pair $(p_1,q_1)$.
    Since $\delta\leq \tilde{\delta}$ and $7\delta+\alpha+3\tilde{\varepsilon}\leq \delta_k$ from the assumptions of the lemma, Corollary~\ref{cor:epsilon_openness_strainer_maps} gives a radius $r_0>0$ small enough such that $U_1:=B(x,2r_0)\subset U$, the associated strainer map $f:V_1=B(x,r_0)\to \mathbb{R}^k$ is $\bar{\varepsilon}_k$-open with the common opposite strainer $\pmb{q}$, and
    \begin{equation}
      \max\{4r_0/|q_kx|, \delta_S(4r_0;|q_kx|)\}<0.01\delta'.
    \end{equation}
    By the openness of $f$, we can find $y\in V_1\setminus \{x\}$ such that $|p_ix|=|p_iy|$ for $i=2,\ldots, k$.
    Let $\xi$ be a unit-speed geodesic from $x$ to $y$.
    Since $|p_i x|=|p_i y|$ for $i=2,\ldots,k$, the same argument as in Steps~2--3 of the proof of Proposition~\ref{prop:bi_Lipschitz}, together with the $(\tilde{\varepsilon}, \tilde{\delta})$-weak quadruple condition and the choice of $r_0$, allows us to choose $z_1\in \xi \cap V_1$ sufficiently close to $x$ such that the new $k$-tuple $(p_2,\ldots,p_k,y)$ is a $(k,\delta,\max\{\alpha,5\delta + 2\tilde{\varepsilon}\})$-strainer at $z_1$.
    Moreover, its first $(k-1)$ entries form a $(k-1,\delta,\alpha)$-strainer at $z_1$ with opposite strainer $(q_1,\ldots,q_{k-1})$, while its last entry $y$ is a $(1,\delta')$-strainer at $z_1$ with the opposite strainer $x$.
    We denote by $p'_1:=y$ and $q'_1:=x$.

    \item Suppose that, for some $2\leq j \leq k$, we have a point $z_{j-1}\in U_1$ and a $k$-tuple
    \begin{equation}
        \pmb{p}_{j-1}:=(p_j,\ldots,p_k,p'_1,\ldots, p'_{j-1})
    \end{equation}
    satisfying the following conditions:
    \begin{enumerate}
        \item $\pmb{p}_{j-1}$ is a $(k,\delta, \max\{\alpha, 5\delta + 2\tilde{\varepsilon}\})$-strainer at $z_{j-1}$, with the opposite strainer
        \begin{equation}
          \pmb{q}_{j-1}:=(q_j,\ldots,q_k,q'_1,\ldots, q'_{j-1});
        \end{equation}
        
        \item The first $k-j+1$ entries $(p_j,\ldots, p_k)$ form a $(k-j+1,\delta,\alpha)$-strainer at $z_{j-1}$, with the opposite strainer $(q_j,\ldots,q_k)$;
        \item The last $j-1$ entries $(p'_1,\ldots,p'_{j-1})$ form a $(j-1, \delta', 5\delta' + 2\varepsilon')$-strainer at $z_{j-1}$ with the opposite strainer $(q'_1,\ldots,q'_{j-1})$;
        \item $|xz_{j-1}|<\sum_{i=1}^{j-1}r_0/2^{i-1}$.
    \end{enumerate}
    Let $f_{j-1}$ be the associated strainer map of $\pmb{p}_{j-1}$.
    For $\pmb{p}_{j-1}$, by our assumption on $\tilde{\varepsilon},\alpha,\delta$, we have that
    \begin{equation}
      2\delta + \max\{\alpha, 5\delta + 2\tilde{\varepsilon}\} + \tilde{\varepsilon}
      \leq
      7\delta + \alpha + 3\tilde{\varepsilon}\leq \delta_k.
    \end{equation} 
    Therefore, by the openness of strainers and strainer maps (Proposition~\ref{prop:almost_orthogonality} and Corollary~\ref{cor:epsilon_openness_strainer_maps}), we can choose a radius $r_j<r_0/2^j$ small enough such that all assumptions (a)--(c) above hold for every point in $U_j:=B(z_{j-1},2r_j)$ in place of the point $z_{j-1}$, and the strainer map $f_{j-1}$ is $\bar{\varepsilon}_k$-open on $V_j:=B(z_{j-1}, r_j)$.
    We also choose $r_j$ so that
    \begin{equation}
      \max\{4r_{j}/|q'_{j-1}z_{j-1}|, \delta_S(4r_{j};|q'_{j-1}z_{j-1}|)\}<0.01\delta'.
    \end{equation}
    By the openness of $f_{j-1}$ on $V_j$, we can find $p'_j\in V_j$ such that all entries of $f_{j-1}(p'_j)$, except the first one, are the same as those of $f_{j-1}(z_{j-1})$.
    That is, $|p_iz_{j-1}|=|p_ip'_j|$ for $i=j+1,\ldots, k$ and $|p'_{i}z_{j-1}|=|p'_{i}p'_j|$ for $i=1,\ldots, j-1$.
    Let $\xi_j$ be a unit-speed geodesic from $q'_j:=z_{j-1}$ to $p'_j$.
    We choose $z_j\in \xi_j\cap V_j$ sufficiently close to $z_{j-1}$ such that
    \begin{equation}
      \max\{|q'_jz_j|/|q'_jp'_j|, \delta_S(|q'_jz_j|;|q'_jp'_j|)\}<\delta'/8.
    \end{equation}

    The new $k$-tuple $\pmb{p}_j:=(p_{j+1},\ldots,p_k,p'_1,\ldots, p'_j)$ forms a mixed strainer at $z_j$.
    Indeed, for the strainer pairs $(p_i,q_i)$ and $(p'_j,q'_j)$ at the base point $z_j$ with $i=j+1,\ldots,k$, assumption (b) above and the same argument as in Step~2--3 of the proof of Proposition~\ref{prop:bi_Lipschitz}, together with the $(\tilde{\varepsilon}, \tilde{\delta})$-weak quadruple condition, implies that
    \begin{equation}\label{eq:almost_self_improvement_2_1}
        \tilde{\angle}p_iz_jp'_j,\: \tilde{\angle}p_iz_jq'_j,\: \tilde{\angle}q_iz_jp'_j,\: \tilde{\angle}q_iz_jq'_j
        > 
        \pi/2 - 5\delta - 2\tilde{\varepsilon},
    \end{equation}
    for $i=j+1,\ldots, k$.
    On the other hand, for the newly created strainer pairs $(p'_i, q'_i)$ and $(p'_j, q'_j)$ at the base point $z_j$, with $i=1,\ldots, j-1$, by assumption (c) above, we can apply the $(\varepsilon', \delta')$-weak quadruple condition to both quadruples $(z_j;p'_i,q'_i, p'_j)$ and $(z_j; p'_i, q'_i, q'_j)$ for $i=1,\ldots,j-1$.
    Therefore, the same argument as in Step~2--3 of the proof of Proposition~\ref{prop:bi_Lipschitz}, together with the $(\varepsilon',\delta')$-weak quadruple condition, implies that
    \begin{equation}\label{eq:almost_self_improvement_2_2}
        \tilde{\angle}p'_iz_j p'_j,\: \tilde{\angle}p'_iz_j q'_j,\: \tilde{\angle}q'_iz_jp'_j,\: \tilde{\angle}q'_iz_jq'_j
        > \pi/2 - 5\delta' - 2\varepsilon', 
    \end{equation}
    for $i=1,\ldots, j-1$.
    Finally, by our choice of $r_j$ and $z_j$, we have that $p'_j$ is a $(1,\delta')$-strainer at $z_j$ with the opposite strainer $q'_j$, and that $|xz_j|<\sum_{i=1}^{j}r_0/2^{i-1}$.
    These estimates, together with the estimates \eqref{eq:almost_self_improvement_2_1}, \eqref{eq:almost_self_improvement_2_2} and the choice of $r_j$ and $z_j$, imply that the new $k$-tuple $\pmb{p}_j:=(p_{j+1},\ldots,p_k,p'_1,\ldots, p'_j)$ is a $(k,\delta,\max\{\alpha, 5\delta + 2\tilde{\varepsilon}\})$-strainer at $z_j$ with the opposite strainer $\pmb{q}_j:=(q_{j+1},\ldots,q_k,q'_1,\ldots, q'_j)$.
    Moreover, its first $k-j$ entries form a $(k-j,\delta,\alpha)$-strainer at $z_j$ with the opposite strainer $(q_{j+1},\ldots,q_k)$, while its last $j$ entries form a $(j, \delta', 5\delta' + 2\varepsilon')$-strainer at $z_j$ with the opposite strainer $(q'_1,\ldots,q'_j)$. 
    
    \item Finally, after $k$ steps, we obtain a $k$-tuple $(p'_1,\ldots, p'_k)$ and a point $z_k$.
    By induction, it follows that this $k$-tuple is a $(k,\delta',5\delta' + 2\varepsilon')$-strainer at $z_k$, and that $|xz_k|<2r_0$.
    Since $r_0$ can be chosen so that $z_k\in U_1\subset U$, this proves the quantitative assertion of the lemma.
    The final assertion follows by choosing $\delta'>0$ and $\varepsilon'>0$ sufficiently small so that $0<\varepsilon'<\min\{\tilde{\varepsilon}, \rho\}$, $0<\delta'<\min\{\delta, \bar{\delta}(\varepsilon', S), \rho\}$, and $5\delta'+2\varepsilon'\leq \rho$.
  \end{enumerate}
\end{proof}

\section{Geometric measure-theoretic structure}\label{sect:geometric_measure_structure}
In this section, we establish several basic geometric measure-theoretic properties of $S$-concave Busemann concave spaces, concerning constancy of dimension, the measure contraction property, rectifiability, and almost everywhere uniqueness of Banach tangent cones.
Throughout this section and the next one, we assume that $X$ is not a singleton; the singleton case is trivial.

\medskip

\subsection{Dimension and measure contraction property}\label{sect:dim}
\noindent
We first introduce a slight variant of the strainer number, adapted to the notion of strainer used in the present paper; for the corresponding notion in Alexandrov spaces with curvature bounded below, see \cite{burago1992ad} and \cite[Definition~10.8.11]{burago2001course}.

\begin{definition}[Strainer number]
  Let $X$ be an $S$-concave Busemann concave space with $S\geq 1$.
  A natural number $m\in \mathbb{N}$ is called a strainer number at $x\in X$ if for any $0<\delta<1/2$ and any open neighborhood $U$ of $x$, there exists a $(m, \delta, \delta)$-strained point in $U$, while the analogous property fails for $m+1$.
  If no such $m$ exists, the strainer number at $x$ is defined to be $\infty$.
  The strainer number of $X$ is defined as the supremum of the strainer numbers over all points of $X$.
\end{definition}
It is clear that the strainer number is well-defined. 
Moreover, the following lemma shows that the strainer number at every point is at least $1$.

\begin{lemma}\label{lemma:strainer_number_at_least_1}
    Let $X$ be an $S$-concave Busemann concave space with $S\geq 1$.
    Then the strainer number at each point of $X$ is at least $1$.
\end{lemma}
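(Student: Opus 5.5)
To show that the strainer number at any point $x\in X$ is at least $1$, we have to check, for every $0<\delta<1/2$ and every open neighborhood $U$ of $x$, that $U$ contains a $(1,\delta,\delta)$-strained point. By Definition~\ref{def:k_strainer}, this is the same as a $(1,\delta)$-strained point in the sense of Definition~\ref{def:1_strainer}. The idea is to copy the trivial case $k=1$ in the proof of Lemma~\ref{lemma:almost_self_improvement}: take an interior point of a geodesic very close to one endpoint. There the comparison angle is exactly $\pi$, and the relative-distance condition holds because $\delta_S(s;t)\to 0$ as $s/t\to 0$.

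Concretely, since $X$ is not a singleton (the standing assumption of this section), we pick a point $p\in X$ with $p\neq x$, set $L:=|px|>0$, and let $\gamma:[0,L]\to X$ be a unit-speed geodesic from $x$ to $p$, which exists because $X$ is geodesic. For small $t\in(0,L)$ we put $z:=\gamma(t)$. Then $z$ lies in $U$ once $t$ is small, and $z\neq x$, $z\neq p$. We take $p$ as the strainer at $z$ and $q:=x$ as its opposite strainer. Since $z$ lies on a geodesic from $x$ to $p$,
\[
|qz|=t,\qquad |pz|=L-t,\qquad |pq|=|qz|+|zp|,
\]
so the Euclidean comparison triangle $\tilde{\Delta}pzq$ is degenerate with $\tilde{\angle}pzq=\pi>\pi-\delta$.

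It remains to impose $|qz|<|pz|$ and $\delta_S(|qz|;|pz|)<\delta$. The first holds as soon as $t<L/2$. For the second, $\delta_S(t;L-t)=\arccos\bigl(1-\tfrac{(S-1)t}{2(L-t)}\bigr)$ tends to $\arccos(1)=0$ as $t\searrow 0$, by continuity of $\arccos$ at $1$. (When $S=1$ it is identically $0$.) So for all sufficiently small $t$ the point $z=\gamma(t)$ is a $(1,\delta)$-strained point in $U$ with strainer pair $(p,x)$. Since $\delta$ and $U$ were arbitrary, $m=1$ satisfies the defining property. Hence the strainer number at $x$, whether finite or $\infty$, is at least $1$.

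I do not expect a real obstacle. The only points needing care are that $X$ must not be a singleton, which is assumed, and that the argument of $\arccos$ must stay in $[-1,1]$ for small $t$, which holds because $(S-1)t/(2(L-t))\to 0$.
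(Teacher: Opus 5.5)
Your proposal is correct and follows essentially the same argument as the paper: take $z$ on a geodesic from $x$ to some other point, very close to $x$, so that $z$ is a $(1,\delta)$-strained point with strainer pair $(p,x)$, i.e.\ the comparison angle at $z$ is $\pi$ and $\delta_S(|xz|;|pz|)<\delta$. You also spell out $z\in U$ and $|xz|<|pz|$ (via $t<L/2$), which the paper leaves implicit in ``sufficiently close to $x$'' and in the bound $|xz|/|yz|<\delta<1/2$.
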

\begin{proof}
    Fix $x\in X$, and choose $y\neq x$.
    Given any $0<\delta<1/2$, take $z$ on a unit-speed geodesic from $x$ to $y$ sufficiently close to $x$ so that
    \begin{equation}
      \max\{|xz|/|yz|, \delta_S(|xz|;|yz|)\} < \delta.
    \end{equation}
    It is then straightforward to check that $z$ is a $(1,\delta)$-strained point.
    Since $0<\delta<1/2$ is arbitrary, the strainer number at $x$ is at least $1$ by definition.
\end{proof}

\begin{lemma}\label{lemma:strainer_number_dimension}
  Let $X$ be an $S$-concave Busemann concave space with $S\geq 1$.
  Then the Hausdorff dimension of $X$ equals the strainer number at any point in $X$.
  If this number is finite, then it also coincides with the topological dimension of $X$.
\end{lemma}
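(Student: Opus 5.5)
We will follow the Burago--Gromov--Perelman scheme, with Proposition~\ref{prop:bi_Lipschitz} and Lemma~\ref{lemma:almost_self_improvement} playing the roles of the classical bi-Lipschitz and self-improvement lemmas. Throughout, fix $\tilde{\varepsilon}>0$ small and, using Proposition~\ref{lemma:verification_WQC_II}, a $\tilde{\delta}>0$ such that $X$ satisfies the $(\tilde{\varepsilon},\tilde{\delta})$-weak quadruple condition.

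\emph{Lower bound.} Suppose the strainer number at $x$ is at least $m$. Choose $\delta\le\tilde{\delta}$ small and a $(m,\delta,\delta)$-strained point $z$ near $x$. By Corollary~\ref{cor:epsilon_openness_strainer_maps}, the strainer map $f$ is Lipschitz and $\bar{\varepsilon}_m$-open on a neighborhood $U$ of $z$. Hence $f(U)$ contains an open set of $\mathbb{R}^m$. Since Lipschitz maps do not increase Hausdorff dimension, we get $\dim_H(U)\ge m$, and also $\mathcal{H}^m(U)>0$. For the topological dimension, note that openness plus continuity gives a map onto an $m$-ball with a ``co-Lipschitz'' right inverse on small scales. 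I would rather get $\dim_T\ge m$ from the bi-Lipschitz chart constructed in the next step. If the strainer number is infinite, this argument already gives $\dim_H X=\infty$.

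\emph{Upper bound and equality.} Let $n$ be the (finite) strainer number at $x$. We need $\dim_H X\le n$, and the main obstacle is that this must hold globally, not just near $x$. Here is the plan.
\begin{enumerate}
\item Since the property ``for any $\delta$, every neighborhood of $x$ contains a $(n+1,\delta,\delta)$-strained point'' fails, there are a $\delta_0$ and a neighborhood $V$ of $x$ containing no $(n+1,\delta_0,\delta_0)$-strained point.
\item Using Lemma~\ref{lemma:almost_self_improvement} (self-improvement), we can choose near $x$ an $(n,\delta,\alpha)$-strained point $x_0$ whose parameters are small enough for Proposition~\ref{prop:bi_Lipschitz}. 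This requires $\max\{5\delta+2\tilde{\varepsilon},\alpha\}\le\delta_0$, which we arrange by taking $\tilde{\varepsilon}$ and $\delta$ small.
\item Proposition~\ref{prop:bi_Lipschitz} then gives a bi-Lipschitz homeomorphism from a neighborhood $W$ of $x_0$ onto a domain in $\mathbb{R}^n$. So $W$ has Hausdorff and topological dimension exactly $n$.
\item To pass from $W$ to all of $X$, use Busemann concavity. For $y\in X$ and $t\in(0,1]$, the contraction map $\Phi_t$ sending $y$ to the point at parameter $t$ on a geodesic from $x_0$ to $y$ is $t$-Lipschitz on $\bar B(x_0,R)$. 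This is exactly the Busemann inequality, and it is well defined up to the non-branching selection.
\item For $t$ small, $\Phi_t(\bar B(x_0,R))\subset W$. If $\Phi_t$ is injective (from non-branching geodesics from $x_0$) with Lipschitz inverse, then $\dim_H\bar B(x_0,R)\le n$.
\end{enumerate}
Injectivity alone does not control the dimension of the preimage. So instead I will use a measure/packing argument. The Busemann inequality gives $\beta_{\bar B(x_0,R)}(r)\le\beta_{\Phi_t(\cdot)}(tr)$ in the reverse direction, since $|\Phi_t y\Phi_t y'|\ge$? That inequality has the wrong sign. The correct tool is the monotonicity $|\gamma(t)\eta(t)|/t$ non-increasing, which gives $|\Phi_t y\Phi_t y'|\ge t|yy'|$ only for pairs compared at a common scale. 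That is fine when $|x_0y|=|x_0y'|$, but not in general. I therefore expect this globalization to be the hardest step.

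\emph{Fallback for globalization.} As the first alternative, I would apply the above argument at every point. Every $y$ has strainer number $n_y$, and step 3 gives open sets of dimension exactly $n_y$ near $y$. It then suffices to show that $n_y$ is constant, and I would do this by connectedness. The set where the strainer number is at least $m$ is closed by definition, and it is open by the openness of strainers (Proposition~\ref{prop:almost_orthogonality}) combined with self-improvement. Since $X$ is geodesic, hence connected, $n_y\equiv n$. Then $X$ is covered by the closures of the strained sets, and $\dim_H X\le n$ follows via the rough-dimension bound \eqref{eq:relation_different_dims}, applied on each bi-Lipschitz chart and combined with separability (countable unions). Finally, $\dim_T X=n$ follows from the charts together with \eqref{eq:relation_different_dims}.
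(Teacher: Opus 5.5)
Your local steps are sound.
\begin{itemize}
\item The lower bound via Corollary~\ref{cor:epsilon_openness_strainer_maps} works, provided you re-choose $\tilde\varepsilon<\delta_m$ for each $m$ in the infinite case.
\item Steps 1--3 correctly produce a bi-Lipschitz chart $W$ around an $(n,\delta,\delta)$-strained point $x_0$ near $x$.
\item Step 2 does not even need self-improvement. Since you choose $\tilde\varepsilon,\delta$ after $\delta_0$ with $5\delta+2\tilde\varepsilon\le\delta_0$, the hypothesis of Proposition~\ref{prop:bi_Lipschitz} follows from monotonicity of the strainer conditions in the parameters. The paper instead fixes the parameters first and then uses Lemma~\ref{lemma:almost_self_improvement}.
\end{itemize}

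The gap is the globalization, and it has two parts.

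\textbf{Openness of $\{y:n_y\ge m\}$ is not justified.} It does not follow from Proposition~\ref{prop:almost_orthogonality} and Lemma~\ref{lemma:almost_self_improvement}. These results only give information at and near $(m,\delta_*,\delta_*)$-strained points, for a fixed small $\delta_*$. So they only show that $\{n_y\ge m\}$ is the closure of the open set $\mathcal{A}(m,\delta_*,\delta_*)$. Nothing in them prevents points of smaller strainer number from accumulating at a boundary point of that closure. Excluding this is exactly the constancy of dimension you are trying to prove, and it needs a global input from Busemann concavity.

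\textbf{Covering by charts does not bound $\dim_H X$.} Even granting $n_y\equiv n$, covering $X$ by closures of charts does not give $\dim_H X\le n$. The charts only cover an open set of strained points. The closed, nowhere dense complement is not controlled by the charts' dimensions; in the paper, bounding it requires the much harder Theorem~\ref{thm:singular_set_dimension}.

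\textbf{The fix is your abandoned Steps 4--5.} They work once the inequality is read correctly. Definition~\ref{def:Busemann_concave} concerns constant-speed geodesics on $[0,1]$ from a common point with \emph{arbitrary} endpoints. So if $\Phi_t(y)$ is the point at parameter $t$ on any chosen constant-speed geodesic from $x_0$ to $y$, then $|\Phi_t(y)\Phi_t(y')|\ge t|yy'|$ for all $y,y'$, with no equal-distance restriction. Note that $\Phi_t$ is not $t$-Lipschitz; that would be Busemann convexity.

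Hence $\Phi_t$ is injective with $t^{-1}$-Lipschitz inverse, and it maps $\bar B(x_0,R)$ into $\bar B(x_0,tR)\subset W$ for small $t$. This gives $\dim_H\bar B(x_0,R)\le\dim_H W=n$. Exhausting $X$ by countably many such balls gives $\dim_H X=n$. Running the same argument at any other point shows $n_y=\dim_H X$ for every $y$, so no connectedness argument is needed.

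This is exactly Kell's lemma that all bounded open subsets of a Busemann concave space have the same Hausdorff dimension, which the paper cites at this step. The topological dimension then follows from the chart and \eqref{eq:relation_different_dims}, as you say.
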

\begin{proof}
  Let $U\subset X$ be an arbitrary bounded open neighborhood of $x\in X$.
  We show that the Hausdorff dimension of $U$ is equal to the strainer number $m$ at $x$.

  \emph{Case $m=\infty$.} 
  Given any $k\in \mathbb{N}$, let $0<\varepsilon<\delta_k$ be a positive number.
  By Proposition~\ref{lemma:verification_WQC_II}, we can find $\tilde{\delta}:=\tilde{\delta}(\varepsilon,S)>0$ such that $X$ satisfies the $(\varepsilon, \delta)$-weak quadruple condition for all $\delta\in (0,\tilde{\delta}]$.
  Let $0<\delta<\tilde{\delta}$ be a small number such that $3\delta + \varepsilon\leq \delta_k$.
  By the definition of strainer number, there exists a $(k,\delta,\delta)$-strained point $y\in U$.
  Since $2\delta + \delta +\varepsilon\leq 3\delta + \varepsilon\leq \delta_k$, by Corollary~\ref{cor:epsilon_openness_strainer_maps} it follows that there exists an open neighborhood $V\subset U$ of $y$ such that $f:V\to \mathbb{R}^k$ is $\sqrt{k}$-Lipschitz and $\bar{\varepsilon}_k$-open.
  Therefore, it holds that
  \begin{equation}
    k\leq \dim_H\left(f(V)\right)\leq \dim_H(V)\leq \dim_H(U)\leq \dim_H(X).
  \end{equation}
  Since $k\in \mathbb{N}$ is arbitrary, we obtain that $\dim_H(X)=\infty$.

  \emph{Case $m<\infty$.} 
  Let $0<\varepsilon<\delta_{m+1}/5$, and $\tilde{\delta}:=\tilde{\delta}(\varepsilon, S)$ be the constant given by Lemma \ref{lemma:verification_WQC_II} such that $X$ satisfies the $(\varepsilon, \delta)$-weak quadruple condition for all $\delta\in (0,\tilde{\delta}]$.
  Let $0<\delta<\tilde{\delta}$ be an arbitrarily small number such that $12\delta + 5\varepsilon \leq \delta_{m+1}$.
  
  By the definition of strainer number, there exists an open neighborhood $V\subset U$ of $x$ and $\bar{\delta}>0$ such that $V$ does not contain any $(m+1,\bar{\delta}, \bar{\delta})$-strained point.
  By the self-improvement property (Lemma~\ref{lemma:almost_self_improvement}), we conclude that $V$ also does not contain any $(m+1,\delta,5\delta + 2\varepsilon)$-strained point.
  Indeed, if the claim were not true and $V$ contained a $(m+1,\delta,5\delta + 2\varepsilon)$-strained point, then our choice of $\delta,\varepsilon$ would imply
  \begin{equation}
    7\delta + \left(5\delta + 2\varepsilon\right) + 3\varepsilon
    \leq
    12\delta + 5\varepsilon
    \leq
    \delta_{m+1}.
  \end{equation}
  By Lemma~\ref{lemma:almost_self_improvement}, it would follow that $V$ contains a $(m+1, \delta',\delta')$-strained point for any $0<\delta'<\bar{\delta}$, which contradicts our assumption on $V$.
  On the other hand, since the strainer number at $x$ equals $m$, the definition of strainer number implies that $V$ contains a $(m,\delta,\delta)$-strained point $z$.
  Since $2\delta +\delta +\varepsilon\leq \delta_{m+1}<\delta_m$ and $V$ admits no $(m+1,\delta, \max\{5\delta+2\varepsilon,\delta\})$-strained point, the hypotheses of Proposition~\ref{prop:bi_Lipschitz} are satisfied.
  Therefore, there exist an open neighborhood $W\subset V$ of $z$ and a map $f:W\to \mathbb{R}^m$ such that $W$ is bi-Lipschitz homeomorphic to an open domain $f(W)$ of $\mathbb{R}^m$.
  Since all bounded open subsets of a Busemann concave space have the same Hausdorff dimension (see \cite[Lemma 2.22]{kell2019sectional}), and since $X$ can be covered by countably many bounded open subsets, we obtain
  \begin{equation}
    m=\dim_{H}f(W)= \dim_H(W)= \dim_H(U) = \dim_H(X).
  \end{equation}
  In particular, the strainer number at every point of $X$ coincides with the Hausdorff dimension of $X$.
  This proves the first assertion.
  The second assertion follows by exactly the same argument as in \cite[Lemma 6.3]{han2025structure}, where we only use the bi-Lipschitz homeomorphism from the first assertion together with the relationship between topological dimension and Hausdorff dimension; see \eqref{eq:relation_different_dims} in Section~\ref{subsect:basics}.
\end{proof}

We now state the main result of this subsection.
In what follows, we say that a metric space has \emph{constant Hausdorff dimension} if the local Hausdorff dimensions at all points coincide.
Here the local Hausdorff dimension $\dim_H X(x)$ at a point $x$ is defined as
\begin{equation}
  \dim_{H}X(x):=\inf\left\{\dim_H(U): x\in U,\: U\text{ open}\right\}.
\end{equation}
\begin{proposition}\label{prop:MCP}
  Let $X$ be an $S$-concave Busemann concave space with $S\geq 1$.
  Then $X$ has finite Hausdorff dimension if and only if it has finite strainer number.
  In this case, these two quantities coincide with the topological dimension of $X$.
  Denoting this common integer by $n\geq 1$, the space $X$ has constant Hausdorff dimension $n$ and carries a nontrivial $n$-dimensional Hausdorff measure.
  Moreover, $(X,\mathsf d,\mathcal{H}^n)$ satisfies the measure contraction property $\mathrm{MCP}(0,n)$ and the Bishop--Gromov volume comparison inequality $\mathrm{BG}(0,n)$.
  In particular, $(X,\mathsf d)$ is doubling and proper.
\end{proposition}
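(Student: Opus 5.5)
The first assertions come almost directly from Lemma~\ref{lemma:strainer_number_dimension}. That lemma says the strainer number at every point equals $\dim_H(X)$. Consequently the strainer number of $X$, being a supremum of a constant, is also $\dim_H(X)$, and one is finite exactly when the other is. In the finite case the same lemma identifies this value with $\dim_T(X)$. Lemma~\ref{lemma:strainer_number_at_least_1} gives $n\geq 1$.

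For constant Hausdorff dimension, I would reuse the proof of Lemma~\ref{lemma:strainer_number_dimension}. It shows that every bounded open neighborhood $U$ of every point $x$ satisfies $\dim_H(U)=m=n$. Since every open neighborhood contains a bounded one, this gives $\dim_H X(x)=n$ for all $x$.

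For nontriviality of $\mathcal{H}^n$, the same proof provides, inside any open set, an open $W$ that is bi-Lipschitz homeomorphic to an open domain of $\mathbb{R}^m$ with $m=n$, via Proposition~\ref{prop:bi_Lipschitz}. Hence $0<\mathcal{H}^n(W')<\infty$ for a small relatively compact ball $W'\subset W$, and $\mathcal{H}^n$ is a nonzero measure that charges every open set.

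For $\mathrm{MCP}(0,n)$ I would invoke Kell's compatibility theorem for Busemann concave spaces \cite{kell2019sectional}. I expect this to be the main obstacle, and I would check first whether the cited result applies directly. As I understand it, Kell shows that if a Busemann concave space carries a nontrivial $\mathcal{H}^n$ that is locally finite on some open set, then $(X,\mathsf d,\mathcal{H}^n)$ satisfies $\mathrm{MCP}(0,n)$.

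The underlying mechanism is as follows. For a base point $o$, the contraction $\Phi_t$ along the (unique, by non-branching) geodesics toward $o$ is $t$-Lipschitz by Busemann concavity. This gives $\mathcal{H}^n(\Phi_t(A))\geq t^n\mathcal{H}^n(A)$, provided $\Phi_t$ is defined $\mathcal{H}^n$-a.e., i.e.\ the cut-type set where geodesics to $o$ are non-unique is $\mathcal{H}^n$-null. The reverse direction uses non-branching to make $\Phi_t$ injective.

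The delicate point is local finiteness of $\mathcal{H}^n$ globally, not just on the chart $W$. To obtain it I would transport finiteness from $W$ to arbitrary balls, using the $t$-Lipschitz contraction maps between balls together with constancy of dimension. This is essentially Kell's argument, so I would cite \cite[Section 4]{kell2019sectional} once nontriviality and local finiteness near one point are established.

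From $\mathrm{MCP}(0,n)$ one obtains the Bishop--Gromov inequality $\mathrm{BG}(0,n)$ by the standard argument of Ohta and Sturm \cite{ohta2007measure,sturm2006geometry2}:
\[
\frac{\mathcal{H}^n(B(x,R))}{\mathcal{H}^n(B(x,r))}\leq \left(\frac{R}{r}\right)^n .
\]
This implies $\mathcal{H}^n$ is doubling with constant $2^n$, so $(X,\mathsf d)$ is metrically doubling. A complete doubling space is proper, since closed balls are complete and totally bounded.
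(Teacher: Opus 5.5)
Your proposal follows essentially the same route as the paper. The dimension assertions come from Lemmas~\ref{lemma:strainer_number_at_least_1} and~\ref{lemma:strainer_number_dimension}, with constancy resting on \cite[Lemma 2.22]{kell2019sectional}, which the paper cites directly. Nontriviality of $\mathcal{H}^n$ comes from the bi-Lipschitz chart, and $\mathrm{MCP}(0,n)$, $\mathrm{BG}(0,n)$, doubling and properness come from Kell's compatibility result, as in \cite[Theorem 6.4]{han2025structure}.

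There is one slip in your heuristic. Busemann concavity gives $|\Phi_t(x)\Phi_t(y)|\geq t|xy|$, not a $t$-Lipschitz bound; the $t$-Lipschitz bound is the Busemann convex direction. It is precisely this lower bound that yields $\mathcal{H}^n(\Phi_t(A))\geq t^n\mathcal{H}^n(A)$, and it also makes $\Phi_t$ injective without any appeal to non-branching. It therefore gives $\mathcal{H}^n(B(o,R))\leq t^{-n}\mathcal{H}^n(B(o,tR))$, which is the transport of local finiteness you need. A genuinely $t$-Lipschitz contraction would give the reverse inequality, and that transport argument would fail.
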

\begin{proof}
  The first assertion follows directly from Lemmas~\ref{lemma:strainer_number_dimension} and~\ref{lemma:strainer_number_at_least_1}.
  The constancy of Hausdorff dimension follows directly from \cite[Lemma 2.22]{kell2019sectional}.
  The existence of nontrivial $n$-dimensional Hausdorff measure follows from Lemma~\ref{lemma:strainer_number_dimension}: there exists a bounded open subset $U$ of $X$ that is bi-Lipschitz homeomorphic to a bounded open subset of $\mathbb{R}^n$, and hence $\mathcal{H}^n(U)\in (0,\infty)$.
  The remaining assertions follow by exactly the same argument as in \cite[Theorem 6.4]{han2025structure}.
  They are direct consequences of the existence of nontrivial $n$-dimensional Hausdorff measure and the geometric properties of $\mathrm{MCP}$ spaces. 
  No local semi-convexity is used in this step.
\end{proof}

In view of the preceding proposition, we say that an $S$-concave Busemann concave space is \emph{finite dimensional} if its Hausdorff dimension, or equivalently, if its strainer number is finite.


\subsection{Rectifiability and Banach tangent cones}\label{subsect:rectifiability_banach_tangent_cones}

\noindent
In this subsection, we show that every $n$-dimensional $S$-concave Busemann concave space is $n$-rectifiable. We also show that $\mathcal{H}^n$-a.e. point admits a unique tangent cone isometric to an $n$-dimensional Banach space.
These properties are well-known for finite-dimensional Alexandrov spaces with curvature bounded below (see, for example, \cite[Chapter 10]{burago1992ad}).
They were also established for locally semi-convex, $S$-concave Busemann concave spaces in \cite{han2025structure}, using the local bi-Lipschitz property of strainer maps and Kirchheim's local structure theorem for rectifiable sets in metric spaces \cite[Theorem 9]{kirchheim1994rectifiable}.
Here, we obtain these properties directly from a general rectifiability criterion for non-collapsed $\mathrm{MCP}$ spaces with unique tangent cones, due to Magnabosco--Mondino--Rossi \cite{mondino2025rectifiability}.

We state a version of this result needed below.
Before stating it, recall that, for a metric space $(X,\mathsf d)$ and $\alpha\in \mathbb{R}^+$, the lower and upper densities of the $\alpha$-dimensional Hausdorff measure $\mathcal{H}^{\alpha}$ at a point $x\in X$ are defined as:
\begin{equation}
  \Theta_{*\alpha}(x):=\liminf_{r\to 0}\frac{\mathcal{H}^{\alpha}\left(B(x,r)\right)}{\omega_nr^{\alpha}},\quad
  \Theta^*_{\alpha}(x):=\limsup_{r\to 0}\frac{\mathcal{H}^{\alpha}\left(B(x,r)\right)}{\omega_nr^{\alpha}}.
\end{equation}

\begin{lemma}[\cite{mondino2025rectifiability}, Theorem 1.2]\label{thm:MMR_thm}
    Let $(X,\mathsf d,\mathcal{H}^N)$ be an $\mathrm{MCP}(K,N)$ space, where $K\in \mathbb{R}$ and $N\in (1,\infty)$ (or $K\leq 0$ and $N=1$).
    Assume that $\mathcal{H}^N$-a.e.\ point of $X$ admits a unique Gromov--Hausdorff tangent cone, and that, for $\mathcal{H}^N$-a.e.\ point $x\in X$, the lower and upper density of $\mathcal{H}^N$ satisfy $0<\Theta_{*N}(x)\leq\Theta^*_N(x)<\infty$.
    Then $N$ is an integer, $(X,\mathsf d)$ is $N$-rectifiable, and for $\mathcal{H}^N$-a.e.\ point $x$, the family $\mathrm{Tan}(X,\mathsf d, x)$ consists of a single element (up to an isometry) which is isometric to an $N$-dimensional Banach space.
\end{lemma}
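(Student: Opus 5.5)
This lemma is quoted from Magnabosco--Mondino--Rossi \cite{mondino2025rectifiability}, so in the paper it is used as a black box. Still, here is how I would reconstruct its proof, following the standard route for non-collapsed spaces with measure bounds.

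\emph{Step 1: tangent cones carry an $N$-Ahlfors regular measure.} Density bounds $0<\Theta_{*N}(x)\le\Theta^*_N(x)<\infty$ together with $\mathrm{MCP}(K,N)$ (which gives doubling, properness and Bishop--Gromov type monotonicity) make the rescaled measures $\mathcal H^N/r^N$ on $(X,\mathsf d/r,x)$ precompact in the pointed measured Gromov--Hausdorff sense. By uniqueness of the tangent cone at a.e.\ $x$, every limit lives on the same metric space $(Y,\mathsf d_Y,o)$. Such a limit is an $\mathrm{MCP}(0,N)$ space carrying a measure comparable to $\mathcal H^N_Y$. Using a density point of $\Theta_{*N}$ and $\Theta^*_N$ (Lebesgue differentiation, Preiss-type tangent measures), I expect to show that for a.e.\ $x$ the limit measure is $c\,\mathcal H^N$ and is $N$-Ahlfors regular with constant density. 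That $N$ must be an integer should come later, from rectifiability.

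\emph{Step 2: the tangent is a metric cone with translation symmetry.} Uniqueness of the tangent at $x$ implies that $(Y,o)$ is self-similar, i.e.\ invariant under dilations. I would then use the "tangents to tangents are tangents" principle (Preiss / Le Donne): at $\mathcal H^N$-a.e.\ point, every tangent of $Y$ at any point $y$ is again a tangent of $X$ at $x$, hence isometric to $Y$. This makes $Y$ homogeneous under blow-ups at every point. With the $\mathrm{MCP}(0,N)$ structure, which supplies geodesics and a measure-theoretic splitting/needle decomposition, I would aim to show that $Y$ is a geodesic metric space that is isometrically homogeneous and dilation-invariant. By a Le Donne-type classification it is then a Carnot group with a homogeneous distance. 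Ahlfors regularity of dimension $N$, combined with MCP (which fails on non-abelian Carnot groups with sub-Finsler structures of step $\ge2$ in the relevant sense, by Juillet-type results), forces the group to be abelian, so $Y\cong(\mathbb R^N,\|\cdot\|)$ is a Banach space and $N\in\mathbb N$.

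\emph{Step 3: rectifiability.} With Banach tangents and positive finite densities almost everywhere, I would apply Bate's characterization \cite{bate2017characterizations}, or the Kirchheim-type criterion: a space with positive lower density whose tangents are a.e.\ Euclidean-like is $N$-rectifiable. Concretely, I would construct almost-isometric charts by distance coordinates to well-chosen points approximating a basis of $Y$, which is bi-Lipschitz on large-measure subsets of small balls.

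\emph{Main obstacle.} The hard step is Step 2, excluding non-abelian Carnot tangents. The first thing I would try is Juillet's argument that the Heisenberg-type geometries fail $\mathrm{MCP}(0,N)$ for the Hausdorff dimension $N$, combined with stability of MCP under pmGH convergence.
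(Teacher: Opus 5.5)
\textbf{Main gap: excluding non-commutative Carnot tangents.} Your architecture matches the one the paper relies on. The paper gives no proof beyond citing \cite{mondino2025rectifiability}, and its remark shows the cited route: unique tangents $\Rightarrow$ sub-Finsler Carnot groups $\Rightarrow$ commutative, i.e.\ Banach $\Rightarrow$ rectifiable. The gap is the step you flag yourself and leave open. You must show that no non-commutative sub-Finsler Carnot group $G$ of homogeneous dimension $Q$ satisfies $\mathrm{MCP}(0,Q)$ with reference measure $\mathcal{H}^Q$. This is not a side issue: applied to $X=G$, which is its own unique tangent at every point and has constant density, the lemma itself contains this statement.

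Juillet's argument does not give this. It explicitly computes the Jacobian of the map sending a point to the $t$-intermediate point of its (a.e.\ unique) geodesic from the origin in the sub-Riemannian Heisenberg group. That computation uses the explicit exponential map of a smooth, strictly convex structure with no nontrivial abnormal geodesics. Your tangent is an arbitrary Carnot group of arbitrary step, carrying an arbitrary sub-Finsler norm that may be non-smooth and not strictly convex. There, geodesics branch and abnormal geodesics occur. There is no canonical a.e.\ defined contraction map whose Jacobian you could compute, and refuting $\mathrm{MCP}$ requires controlling every selection of geodesics.

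Stability of $\mathrm{MCP}$ under pmGH convergence does not help either. $G$ is its own tangent cone and asymptotic cone, so no limiting procedure moves you toward the Heisenberg model. What you need is a rigidity statement valid for every sub-Finsler Carnot group, using only $\mathrm{MCP}(0,Q)$, the exact scaling $\mathcal{H}^Q(B(g,r))=c\,r^Q$, and the group structure. This is exactly what the paper imports via \cite[Theorem 4.3]{mondino2025rectifiability} and the proof of \cite[Theorem 4.6]{mondino2025rectifiability}. Without it, your Step 2 only yields that a.e.\ tangent is a sub-Finsler Carnot group of homogeneous dimension $N$. Integrality of $N$, the Banach conclusion, and Step 3 all depend on the missing step.

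\textbf{Smaller points.}

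The lemma as quoted also covers $K\le 0$, $N=1$, which lies outside the stated range of \cite{mondino2025rectifiability}. This extension is the only thing the paper actually argues. In your framework it is immediate: a Carnot group of homogeneous dimension $1$ is $\mathbb{R}$, so the exclusion step is vacuous. You should say this explicitly.

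In Step 2, ``tangents of tangents are tangents'' only gives that $\mathrm{Tan}(Y,\mathsf d_Y,y)$ consists of $(Y,\mathsf d_Y,o)$ alone for each $y$. Isometric homogeneity instead comes from the base-change property together with uniqueness, as in Le Donne's argument. The base-change property says that for a.e.\ $x$ and every $y\in Y$, one has $(Y,\mathsf d_Y,y)\in\mathrm{Tan}(X,\mathsf d,x)$. Neither $\mathrm{MCP}$ nor needle decompositions play any role here.

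In Step 3, Kirchheim's theorem runs in the opposite direction: rectifiable implies Banach tangents a.e. The converse you need is Bate's criterion \cite{bate2017characterizations}, applied on bounded sets where $\mathcal{H}^N$ is finite.
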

\begin{remark}
    Although \cite[Theorem 1.2]{mondino2025rectifiability} is stated for $K\in\mathbb{R}$ and $N\in(1,\infty)$, its conclusion remains valid for $K\leq 0$ and $N=1$. 
    This is essentially because \cite[Theorem 4.3]{mondino2025rectifiability} holds trivially for $N=1$, which further implies that $\mathcal{H}^1$-a.e.\ point of $X$ admits a unique tangent cone isometric to a commutative sub-Finsler Carnot group with homogeneous dimension $N=1$; see the proof of \cite[Theorem~4.6]{mondino2025rectifiability}, together with \cite[Theorem~3.18]{mondino2025rectifiability}, and also \cite[Remark 5.15]{fujioka2026busemann}.
\end{remark}

We are now in a position to state the main result of this subsection.
\begin{proposition}\label{prop:rectifiability_Banach_tangent_cone}
  Let $X$ be an $n$-dimensional $S$-concave Busemann concave space with $S\geq 1$.
  Then $X$ is $n$-rectifiable, and $\mathcal{H}^n$-a.e.\ point of $X$ admits a unique tangent cone which is isometric to an $n$-dimensional Banach space.
\end{proposition}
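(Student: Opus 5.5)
We will verify the hypotheses of Lemma~\ref{thm:MMR_thm} with $K=0$ and $N=n$, and then read off its conclusions, identifying Gromov--Hausdorff tangent cones with the tangent cones $T_xX$ of Definition~\ref{def:tangent_cones_Busemann_concave}.

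\textbf{Step 1: the MCP hypothesis.} By Proposition~\ref{prop:MCP}, $(X,\mathsf d,\mathcal H^n)$ satisfies $\mathrm{MCP}(0,n)$ with $n\geq 1$ an integer. For $n=1$ we use the extension recorded in the remark after Lemma~\ref{thm:MMR_thm}, which covers $K\le 0$, $N=1$.

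\textbf{Step 2: unique tangent cones.} Proposition~\ref{prop:MCP} also gives that $X$ is doubling and proper. By \cite[Corollary 2.21]{kell2019sectional} (see also \cite[Proposition 4.7]{han2025structure}), at every point $x$ the Gromov--Hausdorff tangent cone exists, is unique, and coincides with $T_xX$. So the uniqueness hypothesis holds at every point, not just almost everywhere.

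\textbf{Step 3: density bounds.} This is the step needing most care. For the upper bound we use Bishop--Gromov $\mathrm{BG}(0,n)$ from Proposition~\ref{prop:MCP}. For fixed $x$ and small $r<R$, it gives
\[
\mathcal H^n(B(x,r))\le (r/R)^n\,\mathcal H^n(B(x,R))
\;\Longleftrightarrow\;
\frac{\mathcal H^n(B(x,r))}{r^n}\le \frac{\mathcal H^n(B(x,R))}{R^n}.
\]
Since $X$ is proper, $\mathcal H^n(B(x,R))<\infty$, so $\Theta^*_n(x)<\infty$ for every $x$. For the lower bound, Bishop--Gromov makes $r\mapsto \mathcal H^n(B(x,r))/r^n$ non-increasing, so $\Theta_{*n}(x)=\lim_{r\to0}\mathcal H^n(B(x,r))/(\omega_n r^n)$ exists and is at least its value at any fixed scale, which is positive because $\mathcal H^n(B(x,R))>0$. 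Positivity of $\mathcal H^n(B(x,R))$ at every $x$ follows from the existence of a nontrivial $\mathcal H^n$ together with MCP or doubling: $\mathcal H^n$ is positive on every ball, since $\mathrm{MCP}$ spaces have full support. If full support is not immediate, we instead argue via constancy of dimension together with the locally bi-Lipschitz charts of Lemma~\ref{lemma:strainer_number_dimension} near strained points, combined with the MCP contraction toward $x$. Hence $0<\Theta_{*n}(x)\le\Theta^*_n(x)<\infty$ for all $x$.

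\textbf{Step 4: conclusion.} Lemma~\ref{thm:MMR_thm} now yields that $X$ is $n$-rectifiable and that, for $\mathcal H^n$-a.e.\ $x$, $\mathrm{Tan}(X,\mathsf d,x)$ is a single $n$-dimensional Banach space. By Step~2 this space is isometric to $T_xX$, giving the statement.

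The main obstacle I expect is the bookkeeping in Steps~2--3. This means checking that the Bishop--Gromov monotonicity is stated with balls centered at arbitrary points, and that $\mathcal H^n$ has full support, so that the lower density is positive everywhere rather than only almost everywhere. Almost-everywhere bounds suffice for the lemma in any case.
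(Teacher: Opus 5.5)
Your overall route matches the paper's: verify the hypotheses of Lemma~\ref{thm:MMR_thm} via Proposition~\ref{prop:MCP}, get uniqueness of tangent cones at every point from doubling and Kell's result, and check the density bounds. The gap is the upper density bound in Step~3, where you use Bishop--Gromov in the wrong direction. For $\mathrm{MCP}(0,n)$, Bishop--Gromov says $\mathcal{H}^n(B(x,R))\le (R/r)^n\,\mathcal{H}^n(B(x,r))$ for $0<r<R$, i.e.\ $\mathcal{H}^n(B(x,r))\ge (r/R)^n\,\mathcal{H}^n(B(x,R))$. This is the reverse of the inequality you wrote. Your next sentence, that $r\mapsto \mathcal{H}^n(B(x,r))/r^n$ is non-increasing, uses the correct direction and contradicts the first claim; together they would force the ratio to be constant. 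Monotonicity in the correct direction gives $\Theta_n(x)=\sup_{r>0}\mathcal{H}^n(B(x,r))/(\omega_n r^n)$. So Bishop--Gromov bounds the density only from below and says nothing against $\Theta_n(x)=+\infty$. Your claim that $\Theta^*_n(x)<\infty$ at every point, and with it the upper-density hypothesis of Lemma~\ref{thm:MMR_thm}, is therefore unproved.

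The missing ingredient is a measure-theoretic argument giving finiteness almost everywhere, which is all the lemma needs. The paper sets $E:=\{x:\Theta_n(x)=\infty\}$ and uses the standard density estimate \cite[eq.~(2.1)]{ambrosio2000rectifiable}: if $\Theta^*_n\ge t$ on $E$, then $\mathcal{H}^n\ge t\,\mathcal{H}^n\llcorner E$. Testing this on a ball $B$ with $\mathcal{H}^n(B)<\infty$ gives $\mathcal{H}^n(E\cap B)\le \mathcal{H}^n(B)/t$ for every $t>1$. Hence $\mathcal{H}^n(E)=0$ by separability.

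This needs local finiteness of $\mathcal{H}^n$, which does not follow from properness as you assert, since compact sets can have infinite $\mathcal{H}^n$-measure. It comes instead from $\mathcal{H}^n$ being the doubling Radon reference measure of the $\mathrm{MCP}(0,n)$ space from Proposition~\ref{prop:MCP}. Your lower-bound argument is fine once positivity on balls is justified as in the paper. Since the strainer number equals $n$ at every point, the proof of Lemma~\ref{lemma:strainer_number_dimension} produces, inside every ball $B(x,r)$, an open set bi-Lipschitz to a domain in $\mathbb{R}^n$, so $\mathcal{H}^n(B(x,r))>0$. No appeal to full support or to $\mathrm{MCP}$ contraction toward $x$ is needed.
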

\begin{proof}
  By Proposition~\ref{prop:MCP}, $(X,\mathsf d,\mathcal{H}^n)$ is a doubling Busemann concave space satisfying $\mathrm{MCP}(0,n)$.
  Hence, every point $x\in X$ admits a unique Gromov--Hausdorff tangent cone which coincides with $T_xX$ up to an isometry; see Section~\ref{subsect:tangent_cones} or \cite[Corollary 2.21]{kell2019sectional}.
  It remains only to establish the lower and upper density bounds at $\mathcal{H}^n$-a.e.\ point of $X$.

  For the lower bound, Bishop--Gromov volume comparison for $\mathrm{MCP}(0,n)$ spaces implies that the lower and upper densities of $\mathcal{H}^n$ at every point coincide and satisfy
  \begin{equation}
        \Theta^*_n(x)=\Theta_{*n}(x)
        =
        \Theta_n(x):=\lim_{r\to 0}\frac{\mathcal{H}^n(B(x,r))}{\omega_nr^n}
        \geq
        \frac{\mathcal{H}^n(B(x,r_x))}{\omega_n (r_x)^n}>0,
  \end{equation}
  for every $x\in X$, where $r_x>0$ can be chosen arbitrarily.
  This is because, for any $r_x>0$, the proof of Lemma \ref{lemma:strainer_number_dimension} implies that there exists a bounded nonempty open subset $W\subset B(x,r_x)$ which is bi-Lipschitz homeomorphic to a bounded nonempty domain of $\mathbb{R}^n$. 
  This implies that $\mathcal{H}^n(B(x,r_x))\geq \mathcal{H}^n(W)>0$.
  In particular, every nonempty open ball has positive $\mathcal{H}^n$-measure.
  For the upper bound, set $E:=\{x\in X: \Theta_n(x)=\infty\}$.
  By a standard result in geometric measure theory (see, for example, \cite[eq. (2.1)]{ambrosio2000rectifiable}), for every $1<t<\infty$, we have the following implication:
  \begin{equation}
    \Theta^*_n(x)\geq t \quad \text{for all } x\in E
    \quad \Longrightarrow \quad
    \mathcal{H}^n \geq t\,\mathcal{H}^n\llcorner E.
  \end{equation}
  This forces $\mathcal{H}^n(E)=0$, and hence $\Theta_n(x)<\infty$ for $\mathcal{H}^n$-a.e.\ point $x\in X$.
  Therefore, by Lemma~\ref{thm:MMR_thm}, the space $X$ is $n$-rectifiable, and $\mathcal{H}^n$-a.e.\ point admits a unique tangent cone isometric to an $n$-dimensional Banach space.
\end{proof}

\begin{remark}
  Rectifiability and the a.e.\ uniqueness of Banach tangent cones for finite-dimensional $S$-concave Busemann concave spaces can also be proved by more direct arguments using strainer maps, as in \cite[Section 7.2]{han2025structure}. 
  In the next section, we use strainer maps to establish finer geometric structure results for finite-dimensional $S$-concave Busemann concave spaces.
\end{remark}

We conclude this subsection with a further geometric characterization of tangent cones of finite-dimensional $S$-concave Busemann concave spaces, under the additional assumption of local $p$-uniform convexity.
\begin{corollary}\label{cor:characterize_Banach_tangent_cone}
    Let $X$ be an $n$-dimensional $S$-concave Busemann concave space.
    Suppose that for every point $x \in X$, there exists $r_x > 0$ such that the following $p$-uniform convexity condition holds on $B(x,r_x)$. 
    For every constant-speed geodesic $\xi: [0,1] \to B(x, r_x)$ and every point $y \in B(x, r_x)$, one has
    \begin{equation}\label{eq:characterize_Banach_tangent_cone}
        |y\xi(1/2)|^p
        \leq
        \frac{1}{2}|y\xi(0)|^p + \frac{1}{2}|y\xi(1)|^p - \frac{C}{4}\left|\xi(0)\xi(1)\right|^p,
    \end{equation}
    for some $p \geq S+1$ and $C = 4/2^p$.
    Then all tangent cones of $X$ are uniquely geodesic.
    Moreover, at $\mathcal{H}^n$-almost every point $x$, the unique tangent cone $(T_x, \mathsf{d}_x, o)$ is isometric to a $2$-uniformly smooth, $p$-uniformly convex Banach space $(E_x,\|\cdot\|_x, o_x)$ with strictly convex norm $\|\cdot\|_x$.
\end{corollary}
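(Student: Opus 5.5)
We first show that every tangent cone is uniquely geodesic, and then identify the Banach tangent cones at almost every point.

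By Proposition~\ref{prop:MCP}, $X$ is doubling and proper. Hence, by \cite[Corollary 2.21]{kell2019sectional}, the tangent cone $T_xX$ at every point coincides with the Gromov--Hausdorff limit of the blow-ups $(X,\mathsf d/\lambda,x)$ as $\lambda\searrow 0$. Both inequalities pass to this limit:
\begin{enumerate}[label=(\roman*)]
  \item $S$-concavity is a four-point inequality that is invariant under rescaling, so it holds in every blow-up.
  \item The $p$-uniform convexity \eqref{eq:characterize_Banach_tangent_cone} holds on $B(x,r_x)$, which becomes $B(x,r_x/\lambda)$ after rescaling. Since $|\cdot|^p$ scales homogeneously, the inequality keeps the same constant $C$.
\end{enumerate}
Because midpoints in a blow-up sequence converge (after passing to a subsequence) to midpoints in the limit, and every midpoint in the limit arises this way in a proper geodesic setting, both inequalities hold on all of $T_xX$ for midpoints of geodesics. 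I then read off unique geodesicity from \eqref{eq:characterize_Banach_tangent_cone}. If $m,m'$ are two midpoints of $a,b$, apply the inequality to a geodesic through $m,m'$ with base point $a$ and with base point $b$; the inequality with $C>0$ is strictly convex along that geodesic. This forces $|am|<\tfrac12|ab|$ or $|bm|<\tfrac12|ab|$ unless $m=m'$, a contradiction. So midpoints are unique, and therefore geodesics are unique.

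For the second part, by Proposition~\ref{prop:rectifiability_Banach_tangent_cone}, at $\mathcal H^n$-a.e.\ $x$ the tangent cone is isometric to an $n$-dimensional Banach space $(E_x,\|\cdot\|_x)$. Its norm then inherits both inequalities:
\begin{enumerate}[label=(\roman*)]
  \item Applying the transported $S$-concavity to the base point $0$ and the segment from $u+v$ to $u-v$ yields
  \[
  \|u+v\|^2+\|u-v\|^2\le 2\|u\|^2+2S\|v\|^2,
  \]
  which is $2$-uniform smoothness.
  \item Applying \eqref{eq:characterize_Banach_tangent_cone} in the same way yields the $p$-uniform convexity inequality
  \[
  \|u\|^p+C\|v\|^p\le \tfrac12\|u+v\|^p+\tfrac12\|u-v\|^p
  \]
  (after relabeling), and this implies strict convexity of the norm.
\end{enumerate}
Alternatively, strict convexity follows directly because a normed space is uniquely geodesic if and only if its norm is strictly convex, so it comes from the first part.

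The hypothesis $p\ge S+1$ with $C=4/2^p$ is presumably needed only for consistency between the two inequalities, which must be compatible in the model $\ell_p$ where $S=p-1$. I expect no use of it beyond that.

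The main obstacle I expect is the passage to the limit in the first part. Geodesics in $T_xX$ need not a priori be limits of geodesics of the blow-ups, and the convexity inequality is only assumed locally. To handle this, I would use the explicit description of $\mathsf d_x$ through the pre-tangent cone $\hat T_xX$ together with the density of $\hat T_xX$ in $T_xX$. I would first verify the inequalities for configurations in $\hat T_xX$ by scaling geodesics in $X$ toward $x$, and then extend them by continuity; midpoints in the completion are handled through approximate midpoints.
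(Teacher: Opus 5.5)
Your plan matches the paper's in substance. The paper's proof simply defers to \cite[Corollary 7.9]{han2025structure}: it notes that the unique-geodesic part uses only the local $p$-uniform convexity, run with $\mathsf d_x$ in place of a Banach norm, and that the Banach-space properties at a.e.\ point are read off as in that corollary.

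One step, however, rests on a false claim. In a proper geodesic setting it is not true that every midpoint of a pointed Gromov--Hausdorff limit is a limit of midpoints of the approximating spaces. For $\ell^2_p\to\ell^2_\infty$ as $p\to\infty$, every point $(1,t)$ with $|t|\le 1$ is a midpoint of $(0,0)$ and $(2,0)$ in the limit, but only $(1,0)$ is a limit of midpoints. So you cannot simply assert \eqref{eq:characterize_Banach_tangent_cone} for the midpoint of an arbitrary geodesic of $T_xX$, and that is exactly what your uniqueness argument applies it to.

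The repair is easy within your blow-up framework: run the contradiction in the blow-ups rather than in the limit. Suppose $m\neq m'$ are midpoints of $a,b$ in $T_xX$.
\begin{enumerate}
  \item Take approximants $a_i,b_i,m_i,m_i'$ in $(X,\mathsf d/\lambda_i,x)$, and let $\mu_i$ be the midpoint of a geodesic from $m_i$ to $m_i'$. For large $i$ this geodesic, and the points $a_i,b_i$, lie in $B(x,r_x)$.
  \item Apply \eqref{eq:characterize_Banach_tangent_cone} once with base point $a_i$ and once with $b_i$, then pass to a subsequential limit $\mu$. This gives $|a\mu|^p\le(|ab|/2)^p-\tfrac C4|mm'|^p$ and the same bound for $|b\mu|^p$.
  \item Hence $|a\mu|+|\mu b|<|ab|$, a contradiction.
\end{enumerate}
Note that it is the midpoint $\mu$ of $m,m'$, not $m$ itself, that is forced to be strictly closer than $|ab|/2$ to \emph{both} $a$ and $b$. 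Your sentence about $|am|$ and $|bm|$ with ``or'' does not give the contradiction.

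Once $T_xX$ is uniquely geodesic, any subsequential limit of blow-up midpoints of $a_i,b_i$ is \emph{the} midpoint of $a,b$. So both $S$-concavity and \eqref{eq:characterize_Banach_tangent_cone} hold along straight segments of the a.e.\ Banach tangent cones, and your second part goes through. One small correction there: with $C=4/2^p$ the transported convexity inequality is $\|u\|^p+\|v\|^p\le\tfrac12\|u+v\|^p+\tfrac12\|u-v\|^p$, which is slightly stronger than what you wrote.
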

\begin{proof}
  The proof is essentially the same as that of \cite[Corollary 7.9]{han2025structure}.
  Indeed, the argument there for the uniquely geodesic property relies only on the local $p$-uniform convexity assumption \eqref{eq:characterize_Banach_tangent_cone} and does not use local semi-convexity.
  Moreover, the same proof remains valid after replacing the norm $\|\cdot\|_x$ of the Banach tangent cone by the distance $\mathsf d_x$ of the tangent cone $T_xX$.
  The second assertion then follows by the same argument as in the last part of the proof of \cite[Corollary 7.9]{han2025structure}.
\end{proof}

\section{Manifold and singular structures}\label{sect:manifold_singular_structures}
In this section, we further investigate the geometric structure of finite-dimensional $S$-concave Busemann concave spaces.

\medskip

\subsection{Topological manifold part}\label{subsect:topological_manifold_part}

\noindent
In this subsection, we show that each finite-dimensional $S$-concave Busemann concave space contains an open dense topological manifold part of top dimension and full Hausdorff measure.
More precisely, we show that, for suitable choices of
$\delta,\alpha>0$, the $n$-almost regular set $\mathcal{A}(n,\delta,\alpha)$ provides the desired topological manifold part.

We first introduce the notions of almost regular sets and regular points.
\begin{definition}[Almost regular sets]
  Let $X$ be an $S$-concave Busemann concave space with $S\geq 1$.
  Given $k\in \mathbb{N}$ and $0<\delta,\alpha<1/2$, we define $\mathcal{A}(k,\delta,\alpha)\subset X$, called the \emph{$k$-almost regular set} of $X$, to be the set of all $(k,\delta,\alpha)$-strained points of $X$\footnote{When $k=1$, $\mathcal{A}(1,\delta,\alpha)$ is the set of all $(1,\delta)$-strained points; the parameter $\alpha$ plays no role in this case.}.
  We say that a point $x\in X$ is a \emph{regular point} if its tangent cone $T_xX$ is isometric to a Banach space.
\end{definition}
\begin{remark}\label{rmk:full_measure_1_almost_regular_set}
  For every $n$-dimensional $S$-concave Busemann concave space, the $1$-almost regular set $\mathcal{A}(1,\delta,\delta)$ has full $\mathcal{H}^n$-measure for every $0<\delta<1/2$. 
  This follows from the almost extendability of geodesics in $\mathrm{MCP}$ spaces and does not require local semi-convexity; see \cite[Lemma 7.2]{han2025structure} and \cite[Section 3.3]{von2008local}.
\end{remark}

We next prove a key technical lemma, estimating the size of the set of points which admit lower-dimensional strainers but no higher-dimensional strainers.
Analogous estimates were obtained for Alexandrov spaces with curvature bounded below in \cite[Lemma 10.5 and Theorem 10.6]{burago1992ad}, using the metric cone structure of tangent cones and cardinality estimates for maximal separated subsets, and for locally semi-convex, $S$-concave Busemann concave spaces in \cite[Lemma 7.3]{han2025structure}, using the almost extendability of geodesics and the infinitesimal behavior of strainer maps restricted to singular sets.

Here we follow an approach similar to that of \cite[Lemma 7.3]{han2025structure}.
This method allows us to estimate the top-dimensional Hausdorff measure of singular sets without appealing to the structure of tangent cones.
Most of the argument is unchanged, except in Step 2 of the proof, where we use the weak quadruple condition in place of both the continuity of angles viewed from a point and the identity that the sum of adjacent angles viewed from a common point equals $\pi$. 
In \cite{han2025structure}, these local geometric properties rely on the local semi-convexity assumption, which is not assumed in the present setting.

Recall that, for a point $x\in X$, the set $I_x\subset X$ consists of all non-cut points of $x$, that is, all points $y\in X$ such that some unit-speed geodesic from $x$ to $y$ can be extended beyond $y$ locally.

\begin{lemma}\label{lemma:full_measure}
  Let $X$ be an $n$-dimensional $S$-concave Busemann concave space with $S\geq 1$.
  Fix $1\leq k\leq n-1$, and let $\tilde{\varepsilon}\in [0,\delta_k)$ and $\tilde{\delta}\in (0,\pi]$ be such that $X$ satisfies the $(\tilde{\varepsilon}, \tilde{\delta})$-weak quadruple condition.
  Let $(p_1,\ldots, p_k)$ be a $(k,\delta,\alpha)$-strainer on an open subset $U\subset X$ with common opposite strainer $(q_1,\ldots,q_k)$, where $\delta,\alpha>0$ satisfy 
  \begin{equation}
    \delta\leq \tilde{\delta}
    \quad 
    \text{and}
    \quad 
    2\delta + \alpha + \tilde{\varepsilon} \leq \delta_k.
  \end{equation}
  Let $E\subset U$ be the set of points which admit no $(k+1,\delta, \max\{\alpha, 5\delta + 2\tilde{\varepsilon}\})$-strainer. 
  Then $\mathcal{H}^n(E)=0$.
\end{lemma}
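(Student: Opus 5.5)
The plan is to follow \cite[Lemma 7.3]{han2025structure}: show that the strainer map $f=(\mathsf d_{p_1},\ldots,\mathsf d_{p_k})$ is injective on $E$ near each of its points, so that $E$ is essentially $k$-dimensional, while the top-dimensional measure comes from almost extendability of geodesics. Suppose, for contradiction, that $\mathcal{H}^n(E)>0$. By Remark~\ref{rmk:full_measure_1_almost_regular_set} and its source (almost extendability of geodesics in $\mathrm{MCP}(0,n)$ spaces, Proposition~\ref{prop:MCP}), for $\mathcal{H}^n$-a.e.\ $x$ and $\mathcal{H}^n$-a.e.\ $y$ one has $y\in I_x$. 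Fix a density point $x_0$ of $E$ at which this holds for a.e.\ $y$. Shrink to a small ball $B\subset U$ around $x_0$ as in the proof of Proposition~\ref{prop:bi_Lipschitz}, so that \eqref{eq:bi_Lipschitz_00} holds and $\pmb p$ is a $(k,\delta,\alpha)$-strainer on $B$ with common opposite strainer $\pmb q$.

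\textbf{Step 1: injectivity of $f$ on $E\cap B$.} Let $x,y\in E\cap B$ be distinct with $f(x)=f(y)$. We may assume that a geodesic $\gamma$ from $y$ to $x$ extends slightly beyond $x$. This is where non-cut points enter; I will arrange it by replacing $y$ by a generic point, or by using the a.e.\ statement above. Let $x^+$ be a point on the extension beyond $x$. Then $x$ is $(1,\delta)$-strained with strainer pair $(y,x^+)$ or $(x^+,y)$, after choosing the ratio $|xx^+|/|xy|$ so small that $\delta_S(|xx^+|;|xy|)<\delta$. The straight angle $\tilde\angle yxx^+=\pi$ gives the straightness exactly. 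It remains to get almost orthogonality at the base point $x$ itself, not at a nearby $z$ as in Proposition~\ref{prop:bi_Lipschitz}.

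\textbf{Step 2: the almost-orthogonality estimates.} This is Step 2 of the original argument, and the point where the weak quadruple condition replaces local semi-convexity. Since $|p_ix|=|p_iy|$, the law of cosines computation of Step~3 of Proposition~\ref{prop:bi_Lipschitz} gives $|\tilde\angle p_ixy-\pi/2|<\delta/2$. The estimate \eqref{eq:bi_Lipschitz_0}, derived from the $(\tilde\varepsilon,\tilde\delta)$-weak quadruple condition applied to $(x;p_i,q_i,y)$ and $(y;p_i,q_i,x)$, then controls $\tilde\angle q_ixy$ within $5\delta/2+\tilde\varepsilon$. For $x^+$, apply the sum of adjacent angles (Lemma~\ref{lem:angle_viewed_from_fixed_point}~\ref{item:sum_adjacent_angles}) at the interior point $x$ of the extended geodesic, together with the almost comparison inequality, to get upper bounds on $\tilde\angle p_ixx^+$ and $\tilde\angle q_ixx^+$. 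Then apply \eqref{eq:bi_Lipschitz_0} with $x^+$ in place of $y$ (weak quadruple condition on $(x;p_i,q_i,x^+)$) for the lower bounds. This gives all angles $>\pi/2-5\delta-2\tilde\varepsilon$. The ordering constraint $|q_kx|>|p_{k+1}x|$ with $\delta_S$ small follows from \eqref{eq:bi_Lipschitz_00}. So $x$ admits a $(k+1,\delta,\max\{\alpha,5\delta+2\tilde\varepsilon\})$-strainer, contradicting $x\in E$.

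\textbf{Step 3: conclusion.} Injectivity alone does not kill $n$-measure, so, as in the original lemma, I would quantify it. By the $\bar\varepsilon_k$-openness and Lipschitz property (Corollary~\ref{cor:epsilon_openness_strainer_maps}), plus injectivity on the non-cut part, $E\cap B$ has $f$-fibres that are essentially single points. Using the doubling/Bishop--Gromov bounds from Proposition~\ref{prop:MCP}, a covering argument then gives $\mathcal{H}^n(E\cap B)\le C\,\mathcal{H}^n(\text{fibre-thickened set})$. Alternatively, one can show that $f|_{E\cap B}$ is bi-Lipschitz onto its image in $\mathbb{R}^k$, so that $\dim_H(E\cap B)\le k<n$. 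I expect the main obstacle to be Step~1: guaranteeing the extension of $\gamma$ beyond $x$ for the relevant pairs, that is, reducing from arbitrary $x,y\in E$ to non-cut configurations while keeping a positive-measure subset of $E$.
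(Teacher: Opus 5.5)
Your Steps~1--2 contain the right geometric mechanism, and it is the one the paper uses: the base point is the point past which the geodesic extends, and the new strainer pair is the other endpoint together with a point on the extension. The angle $\tilde{\angle}p_i\,\cdot\,\cdot$ is controlled by the law of cosines, and the other three angles are recovered from \eqref{eq:bi_Lipschitz_0} and the sum of adjacent angles.

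The genuine gap is Step~3, which is the obstacle you flagged, and it cannot be closed inside the exact-injectivity framework.
\begin{itemize}
\item You cannot ``replace $y$ by a generic point'' while keeping $f(x)=f(y)$. The level set $f^{-1}(f(x))$ is expected to be $\mathcal{H}^n$-null, so almost-everywhere statements about non-cut points say nothing about the pairs you need.
\item Even if you had such pairs, $\mathcal{H}^n$-null fibres give no bound on $\mathcal{H}^n(E\cap B)$; think of the projection of a square onto a side. Honest injectivity of a Lipschitz map into $\mathbb{R}^k$ does not give one either. So the ``fibre-thickened'' covering has nothing to act on.
\item Your alternative, that $f|_{E\cap B}$ is bi-Lipschitz, would need a non-cut configuration for every pair of points of $E\cap B$, which you do not have.
\end{itemize}

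The missing idea is to make the contradiction quantitative and infinitesimal, with the non-cut condition taken relative to a fixed centre.

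First, exact equality is not needed. One has $\cos\tilde{\angle}p_iyx=\frac{(|p_iy|-|p_ix|)(|p_iy|+|p_ix|)}{2|p_iy|\,|xy|}+\frac{|xy|}{2|p_iy|}$. This is already $<\delta/4$ once $\bigl||p_iy|-|p_ix|\bigr|<(\delta/8)|xy|$ and $|xy|\ll|p_iy|$. That is an open condition in $y$, so it survives moving $y$ to a nearby non-cut point.

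Next, $I_x$ has full $\mathcal{H}^n$-measure for every $x$, and $\mathcal{H}^n$ is doubling. Hence the set $K$ of points of $E$ at which $E$ has density one satisfies $\mathcal{H}^n(E\setminus K)=0$, and $I_x\cap K$ is dense in $K$ for each $x\in K$.

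Now fix $x\in K$. By density and the Lipschitz continuity of $f$, it suffices to test $|f(x)f(y)|/|xy|$ along $y\in K\cap I_x$ with $y\to x$. If this ratio is $<\delta/8$, extend the geodesic from $x$ beyond $y$ to a point $y^+$. Then run your Step~2 at the base point $y$ with new strainer pair $(x,y^+)$; this contradicts $y\in E$. Hence $\liminf_{K\ni y\to x}|f(x)f(y)|/|xy|\geq\delta/8$ for all $x\in K$.

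Finally, the standard decomposition \cite[Lemma~3.1]{lytchak2006open} writes $K$ as a countable union of pieces on which $f$ is bi-Lipschitz onto subsets of $\mathbb{R}^k$. Thus $\dim_H K\leq k<n$, and $\mathcal{H}^n(E)=\mathcal{H}^n(K)=0$.
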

\begin{proof}
  Let $f:=(\mathsf d_{p_1},\ldots,\mathsf d_{p_k}):U\to \mathbb{R}^k$ be the associated strainer map.
  Suppose, for contradiction, that $\mathcal{H}^n(E)>0$.

  \begin{enumerate}[fullwidth, label=\textbf{Step \arabic*:}]
    \item By Proposition~\ref{prop:MCP}, the $n$-dimensional Hausdorff measure $\mathcal{H}^n$ is a nontrivial doubling Radon measure on $X$. 
    Moreover, for each $x\in X$, the set $I_x$ of non-cut points of $x$ has full $\mathcal{H}^n$-measure, since the set of cut points of a point in an $\mathrm{MCP}$ space is negligible; see, for example, \cite[Section 3.3]{von2008local}.
    Therefore, following exactly the same argument in Step 1 of \cite[Lemma 7.3]{han2025structure}, we can find a subset $K\subset E$ such that $\mathcal{H}^n(E\setminus K)=0$, $I_x\cap B(y,r)\cap K\neq \emptyset$ for all $x,y\in K$ and $r>0$, and $I_x\cap K$ is dense in $K$ for every $x\in K$.
    We emphasize that Step 1 of \cite[Lemma 7.3]{han2025structure} relies only on the Lebesgue differentiation theorem for doubling measures, and does not use local semi-convexity.

    \item In this step, we show that
    \begin{equation}\label{eq:full_measure_0}
      \liminf_{K\ni y\to x}\frac{|f(x)f(y)|}{|xy|}\geq \delta/8,\quad \text{for all } x\in K.
    \end{equation}
    Since $I_x\cap K$ is dense in $K$ for every $x\in K$, it suffices to prove that
    \begin{equation}\label{eq:full_measure_1}
      \lim_{r\to 0}\inf_{y\in B(x,r)\cap K\cap I_x}\frac{|f(x)f(y)|}{|xy|}\geq \delta/8,\quad \text{for all } x\in K.
    \end{equation}
    The non-emptiness of $B(x,r)\cap K\cap I_x$ for every $x\in K$ and $r>0$ ensures that the infimum on the left-hand side of \eqref{eq:full_measure_1} is not infinite\footnote{We use the convention that $\inf \emptyset =\infty$.} and is bounded above by the Lipschitz constant of $f$.

    Suppose by contradiction that there exists $x\in K$ and a sequence $\{x_j\}_j\subset K\cap I_x$ with $x_j\to x$ such that
    \begin{equation}\label{eq:full_measure_2}
        \lim_{j\to \infty}\frac{\big||p_ix_j|-|p_ix|\big|}{|x_jx|}
        \leq
        \lim_{j\to \infty}\frac{\left|f(x)f(x_j)\right|}{|x_jx|}
        < \delta/8,\quad \text{for all } i=1,\ldots, k.
    \end{equation}
    Set $t_j:=|xx_j|$.
    Choose $j$ sufficiently large such that 
    \begin{equation}\label{eq:full_measure_3}
      \max\{4t_j/|q_kx|, \delta_S(4t_j;|q_kx|)\}<0.01\delta.
    \end{equation}
    Since $x_j\in I_x$, we may choose a point $y$ on an extension of a unit-speed geodesic from $x$ to $x_j$ beyond $x_j$ such that
    \begin{equation}\label{eq:full_measure_4}
      \max\{|x_jy|/t_j, \delta_S(|x_jy|;t_j)\}<0.01\delta.
    \end{equation}
    
    We claim that $(p_1,\ldots,p_k,x)$ is a $(k+1,\delta, \max\{\alpha, 5\delta + 2\tilde{\varepsilon}\})$-strainer at $x_j$.
    Indeed, by the choice of $y$ and $t_j$, namely \eqref{eq:full_measure_4}, it follows that $x$ is a $(1,\delta)$-strainer at $x_j$ with the opposite strainer $y$.
    Moreover, \eqref{eq:full_measure_3} gives $|q_kx|>|xx_j|=t_j$ and $\delta_S(|xx_j|;|q_kx|)<\delta$.
    Thus, conditions (1)--(2) in the definition of strainers (Definition~\ref{def:k_strainer}) are satisfied for $(p_1,\ldots,p_k,x)$ at $x_j$.
    It remains only to verify the almost orthogonality condition (3) in Definition~\ref{def:k_strainer}.

    Fix $i\in \{1,\ldots,k\}$.
    Consider the comparison triangle $\tilde{\Delta}p_ix_jx$.
    By the choice of $t_j$ and $x_j$, namely \eqref{eq:full_measure_3} and \eqref{eq:full_measure_4}, together with the hypothesis \eqref{eq:full_measure_2}, the same argument in (7.5)--(7.6) of Step 2 in \cite[Lemma 7.3]{han2025structure} (see also \cite[Lemma 4.3, eq. (4.8)]{han2025structure}) gives 
    \begin{equation}
        \left|\cos \tilde{\angle}p_ix_j x\right|
        \leq
        \left|\frac{|p_ix_j|-|p_ix|}{t_j}\right| + \frac{t_j}{2|p_ix|}
        <
        \delta/8 + 0.01\delta < \delta/4.
    \end{equation}
    Note that this argument uses only the Euclidean law of cosines and $S$-concavity.
    Hence we obtain $|\tilde{\angle}p_ix_jx-\pi/2|<\delta/2$.

    It remains to check the almost orthogonality of the other three comparison angles
    \[
      \tilde{\angle}q_ix_jx,\quad
      \tilde{\angle}p_ix_jy,\quad
      \tilde{\angle}q_ix_jy.
    \]
    By the choice of $t_j$ and $y$, we have
    \begin{equation}
      |yx_j|<|xx_j|\leq 0.01\delta\min\{|p_ix_j|,|q_ix_j|\}.
    \end{equation}
    Applying the same argument as in Step 2 of Proposition~\ref{prop:bi_Lipschitz} to the quadruples $(x_j; p_i,q_i,x)$ and $(x_j; p_i,q_i,y)$\footnote{Here $(x,y)$ is a $(1,\delta)$-strainer pair, whereas in Proposition~\ref{prop:bi_Lipschitz} the $(1,\delta)$-strainer pair is $(y,x)$.}, we get
    \begin{equation}
      \left|\tilde{\angle}p_ix_jx + \tilde{\angle}q_ix_jx - \pi\right|< 2\delta + \tilde{\varepsilon},
      \quad
      \left|\tilde{\angle}p_ix_jy + \tilde{\angle}q_ix_jy - \pi\right|< 2\delta + \tilde{\varepsilon}.
    \end{equation}
    Together with the inequality $|\tilde{\angle}p_ix_jx-\pi/2|<\delta/2$ obtained above, the same argument as in Step 3 of Proposition~\ref{prop:bi_Lipschitz} implies that $\tilde{\angle}q_ix_jx, \tilde{\angle}p_ix_jy, \tilde{\angle}q_ix_jy$ are all greater than $\pi/2- 5\delta - 2\tilde{\varepsilon}$.

    Thus, $(p_1,\ldots, p_k, x)$ is a $(k+1,\delta, \max\{\alpha, 5\delta + 2\tilde{\varepsilon}\})$-strainer at $x_j\in E$, contradicting the definition of $E$.
    This proves the claim \eqref{eq:full_measure_0}.

    \item Finally, following exactly the same argument as in Step 3 of \cite[Lemma 7.3]{han2025structure}, we obtain $\mathcal{H}^n(K)=0$.
    That argument uses only the infinitesimal behavior \eqref{eq:full_measure_0} of $f$ restricted to $K$, based on \cite[Lemma 3.1]{lytchak2006open}, and does not rely on local semi-convexity.
    This contradicts our initial assumption together with the choice of $K$, namely, $\mathcal{H}^n(K)=\mathcal{H}^n(E)>0$.
    Hence the initial assumption is false, and the assertion follows.
  \end{enumerate}
\end{proof}

We are now in a position to prove the main result of this subsection: for a suitable choice of parameters, the almost regular set is an open dense topological manifold of top dimension and full top-dimensional Hausdorff measure.

\begin{theorem}[Topological manifold part]\label{thm:top_manifold_part}
  Let $X$ be an $n$-dimensional $S$-concave Busemann concave space with $S\geq 1$.
  Then, for every $0<\delta<1/2$, the $n$-almost regular set $\mathcal{A}(n,\delta, \delta)$ is open and dense in $X$, and has full $\mathcal{H}^n$-measure; that is, $\mathcal{H}^n(X\setminus \mathcal{A}(n,\delta,\delta))=0$.
  Furthermore, if $0<\delta<1/2$ is sufficiently small, then $\mathcal{A}(n,\delta,\delta)$ is a topological $n$-manifold.
  In particular, $X$ contains an open dense topological manifold part of top dimension and full top-dimensional Hausdorff measure.
\end{theorem}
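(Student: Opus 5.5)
Openness of $\mathcal{A}(n,\delta,\delta)$ is immediate from the first part of Proposition~\ref{prop:almost_orthogonality}. Strainers are stable under small perturbation of the base point: if $x$ is $(n,\delta,\delta)$-strained, the same tuple is a strainer with the same opposite strainer on a neighborhood of $x$. That proposition is stated under a weak quadruple hypothesis, but the openness part uses only continuity of comparison angles and of $\delta_S$, so no restriction on $\delta$ is needed.

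For full measure, fix $0<\delta<1/2$ and argue by downward induction on the strainer dimension $k$, starting from Remark~\ref{rmk:full_measure_1_almost_regular_set}, which says $\mathcal{H}^n(X\setminus\mathcal{A}(1,\rho,\rho))=0$ for all $\rho$. The goal is to show that for each $k<n$, almost every point of $\mathcal{A}(k,\rho_k,\rho_k)$ lies in $\mathcal{A}(k+1,\rho_{k+1},\rho_{k+1})$, for a suitable decreasing sequence of parameters ending at $\rho_n\le\delta$. The step uses Lemma~\ref{lemma:full_measure}. Choose $\tilde\varepsilon$ tiny (for instance $\tilde\varepsilon<\delta_n/20$ and $\tilde\varepsilon<\delta/10$), and let $\tilde\delta=\bar\delta(\tilde\varepsilon,S)$ from Proposition~\ref{lemma:verification_WQC_II}. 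Take parameters $\rho\le\tilde\delta$ with $2\rho+\rho+\tilde\varepsilon\le\delta_n$ and $5\rho+2\tilde\varepsilon\le\delta$. Cover $\mathcal{A}(k,\rho,\rho)$ by countably many open sets $U$ on which a fixed strainer with common opposite strainer works; this is possible because $X$ is proper, hence separable, by Proposition~\ref{prop:MCP}. On each $U$, Lemma~\ref{lemma:full_measure} shows that almost every point admits a $(k+1,\rho,\max\{\rho,5\rho+2\tilde\varepsilon\})$-strainer. Then $\max\{\rho,5\rho+2\tilde\varepsilon\}\le\delta$ and $\rho\le\delta$, so these points are $(k+1,\delta,\delta)$-strained. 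To continue the induction I would keep both parameters of the form used in Lemma~\ref{lemma:full_measure}: I would work with $\mathcal{A}(k,\rho,\beta)$ for $\beta=5\rho+2\tilde\varepsilon$, which is stable under the lemma's output, since $\max\{\beta,5\rho+2\tilde\varepsilon\}=\beta$. Then $\mathcal{A}(1,\rho,\beta)\supset$ a full-measure set, inducting gives full measure of $\mathcal{A}(n,\rho,\beta)\subset\mathcal{A}(n,\delta,\delta)$, and monotonicity of strainer sets in the parameters closes the argument. The lemma is stated for $k\le n-1$, which is exactly what the induction needs. Density then follows: a nonempty open set has positive $\mathcal{H}^n$-measure, as shown in the proof of Proposition~\ref{prop:rectifiability_Banach_tangent_cone}, so it meets the full-measure set.

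For the manifold assertion, take $\delta$ small enough that $\delta\le\bar\delta(\tilde\varepsilon,S)$ for some $\tilde\varepsilon<\delta_n$ with $3\delta+\tilde\varepsilon\le\delta_n$. Given $x\in\mathcal{A}(n,\delta,\delta)$, I apply Proposition~\ref{prop:bi_Lipschitz} with $k=n$. Its hypothesis requires a neighborhood without $(n+1,\delta,\max\{5\delta+2\tilde\varepsilon,\delta\})$-strained points. This is the main obstacle, and I would handle it through the dimension bound. If such points existed near $x$, then Lemma~\ref{lemma:almost_self_improvement} (with parameters satisfying $7\delta+5\delta+2\tilde\varepsilon+3\tilde\varepsilon\le\delta_{n+1}$, arranged by taking $\delta,\tilde\varepsilon$ small relative to $\delta_{n+1}$) would produce $(n+1,\rho,\rho)$-strained points arbitrarily close for all $\rho$. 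That would make the strainer number at least $n+1$, contradicting Lemma~\ref{lemma:strainer_number_dimension}; this is exactly the argument in its proof. Hence the strainer map is a bi-Lipschitz homeomorphism from a neighborhood of $x$ onto an open domain of $\mathbb{R}^n$, so $\mathcal{A}(n,\delta,\delta)$ is a topological $n$-manifold. The delicate bookkeeping is choosing one $\tilde\varepsilon$ and a threshold on $\delta$ that serve Proposition~\ref{lemma:verification_WQC_II}, Lemma~\ref{lemma:almost_self_improvement} at level $n+1$, and Proposition~\ref{prop:bi_Lipschitz} at level $n$ simultaneously. Since $\delta_{n+1}<\delta_n$, all constraints reduce to making $\delta$ and $\tilde\varepsilon$ small compared to $\delta_{n+1}$.
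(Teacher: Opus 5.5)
Your proposal is correct and follows the paper's proof essentially step for step. Openness comes from the stability part of Proposition~\ref{prop:almost_orthogonality}. Full measure comes from induction on $k$, through Lemma~\ref{lemma:full_measure} with the self-reproducing parameter pair $(\rho,5\rho+2\tilde\varepsilon)$ and a countable cover; note this induction runs upward from $k=1$, not downward. The manifold property comes from combining the self-improvement argument in the proof of Lemma~\ref{lemma:strainer_number_dimension} with Proposition~\ref{prop:bi_Lipschitz} under $12\delta+5\tilde\varepsilon\le\delta_{n+1}$.

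The only real deviation is density. You deduce it from full measure together with positivity of $\mathcal{H}^n$ on nonempty open sets, whereas the paper reads it off directly from the strainer number being $n$ at every point; both work. One bookkeeping point: with $\alpha=5\rho+2\tilde\varepsilon$, Lemma~\ref{lemma:full_measure} actually needs $7\rho+3\tilde\varepsilon\le\delta_k$, not the constraint you wrote. Your choices happen to satisfy it, since $\delta_{n-1}>4\delta_n$.
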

\begin{proof}
  \begin{enumerate}[fullwidth]
    \item[\textit{Openness and density:}] by Proposition~\ref{prop:almost_orthogonality}, $\mathcal{A}(n,\delta,\alpha)$ is open for all $0<\delta,\alpha<1/2$.
    To prove density, fix $\delta, \alpha>0$ and choose $0<\delta'<\min\{\delta, \alpha\}$. 
    Since the strainer number at every point equals the Hausdorff dimension $n$ of $X$, by the definition of strainer number, every neighborhood of every point contains a $(n,\delta',\delta')$-strained point.
    Such a strained point belongs to $\mathcal{A}(n,\delta, \alpha)$, and hence $\mathcal{A}(n,\delta,\alpha)$ is dense in $X$.

    \item[\textit{Topological manifold part:}] we show that $\mathcal{A}(n,\delta,\delta)$ is a topological $n$-manifold for all sufficiently small $0<\delta<1/2$.
    Let $0<\varepsilon<\delta_{n+1}/5$ and let $\tilde{\delta}:=\tilde{\delta}(\varepsilon,S)$ be the constant given by Lemma~\ref{lemma:verification_WQC_II}, such that $X$ satisfies the $(\varepsilon,\delta)$-weak quadruple condition for all $\delta\in (0,\tilde{\delta}]$.
    Fix any $0<\delta<\tilde{\delta}$ such that $12\delta+ 5\varepsilon\leq \delta_{n+1}$.

    Let $x\in \mathcal{A}(n,\delta,\delta)$ be arbitrary.
    By the openness of almost regular sets, the definition of strainer number and the self-improvement property, as in the proof of Lemma~\ref{lemma:strainer_number_dimension}, there exists a small open neighborhood $U\subset \mathcal{A}(n,\delta,\delta)$ of $x$ such that $U$ contains no $(n+1, \delta, 5\delta+2\varepsilon)$-strained point.
    Since our choices of $\varepsilon$ and $\delta$ imply that $2\delta + \delta+ \varepsilon \leq \delta_{n}$, Proposition~\ref{prop:bi_Lipschitz} gives a possibly smaller open neighborhood $V\subset U$ of $x$ which is bi-Lipschitz homeomorphic to an open subset of $\mathbb{R}^n$.
    Hence, $\mathcal{A}(n,\delta,\delta)$ is a topological $n$-manifold.

    \item[\textit{Full Hausdorff measure:}] we show that, for every $0<\delta<1/2$, the $n$-almost regular set $\mathcal{A}(n,\delta,\delta)$ has full $\mathcal{H}^n$-measure.
    When $n=1$, this follows directly from the almost extendability of geodesics in $\mathrm{MCP}$ spaces; see Remark~\ref{rmk:full_measure_1_almost_regular_set}.
    Therefore, we may assume that $n\geq 2$.

    Let $0<\varepsilon<\delta_{n}/3$, and $\tilde{\delta}:=\tilde{\delta}(\varepsilon,S)$ be the corresponding constant given by Lemma~\ref{lemma:verification_WQC_II} such that $X$ satisfies the $(\varepsilon,\delta)$-weak quadruple condition for all $\delta\in (0,\tilde{\delta}]$.
    Choose $0<\delta<\tilde{\delta}$ sufficiently small such that $7\delta + 3\varepsilon \leq \delta_{n}$.

    We claim that, for each $k=2,\ldots, n$, the set 
    \begin{equation}
      \mathcal{A}(k-1,\delta, 5\delta+ 2\varepsilon)\setminus \mathcal{A}(k, \delta, 5\delta + 2\varepsilon)
    \end{equation}
    has zero $\mathcal{H}^n$-measure.
    Indeed, for any $x\in \mathcal{A}(k-1,\delta, 5\delta+ 2\varepsilon)$, Proposition~\ref{prop:almost_orthogonality} gives a sufficiently small radius $r_x>0$ such that the ball $B(x,r_x)$ admits a $(k-1, \delta, 5\delta + 2\varepsilon)$-strainer with a common opposite strainer.
    Since $2\delta + (5\delta + 2\varepsilon)+\varepsilon\leq \delta_{k-1}$, Lemma~\ref{lemma:full_measure} implies that the set $B(x,r_x)\setminus \mathcal{A}(k,\delta,5\delta + 2\varepsilon)$ is $\mathcal{H}^n$-negligible.
    As $X$ is proper by Proposition~\ref{prop:MCP}, we may extract a countable cover $\{B(x_l,r_{x_l})\}_{l\in \mathbb{N}}$ of $\mathcal{A}(k-1,\delta, 5\delta + 2\varepsilon)$ by such balls.
    Hence, we conclude that $\mathcal{A}(k-1,\delta, 5\delta+ 2\varepsilon)\setminus \mathcal{A}(k, \delta, 5\delta + 2\varepsilon)$ is $\mathcal{H}^n$-negligible for every $k=2,\ldots, n$.
    By subadditivity of measure, it follows that
    \begin{multline}
      \mathcal{H}^n\left(X\setminus \mathcal{A}(n,\delta, 5\delta + 2\varepsilon)\right)
      \leq
      \mathcal{H}^n\left( X\setminus \mathcal{A}(1,\delta)\right)\\
      +
      \sum_{k=2}^n \mathcal{H}^n\left( \mathcal{A}\left(k-1, \delta, 5\delta + 2\varepsilon \right) \setminus \mathcal{A}\left(k,\delta, 5\delta + 2\varepsilon \right) \right)
      =0,
    \end{multline}
    where we use the identification $\mathcal{A}(1,\delta, \alpha)=\mathcal{A}(1,\delta)$ for all $\alpha>0$.
    Thus, $\mathcal{A}(n,\delta, 5\delta + 2\varepsilon)$ has full $\mathcal{H}^n$-measure.

    Finally, given any $0<\delta'<1/2$, choose $0<\varepsilon<\delta_n/3$ and $0<\delta<\tilde{\delta}(\varepsilon,S)$ sufficiently small such that $7\delta + 3\varepsilon\leq \delta_n$ and $5\delta + 2\varepsilon \leq \delta'$.
    Then $\mathcal{A}(n,\delta, 5\delta+2\varepsilon)\subset \mathcal{A}(n,\delta',\delta')$, and therefore $\mathcal{A}(n,\delta',\delta')$ has full $\mathcal{H}^n$-measure.
    This proves the claim.
  \end{enumerate}
\end{proof}

\subsection{Singular strata and stratification}\label{subsect:singular_strata}

\noindent
In this subsection, we estimate the Hausdorff dimension of singular sets.
This estimate yields a natural measure-theoretic stratification $\{X_k\}_{k=0}^n$ for every $S$-concave Busemann concave space.
For this stratification, $\dim_H(X_k)\leq k$ for each $k=0,\ldots,n$, and the top-dimensional stratum $X_n$ is an open dense topological $n$-manifold.
A related, stronger topological stratification theorem for finite-dimensional Alexandrov spaces with curvature bounded below was established in \cite{perel1994elements}; see also \cite{perelman1991alexandrov,burago2001course,perel1994extremal} and \cite[Section 13]{burago1992ad}.
For locally semi-convex, $S$-concave Busemann concave spaces, a measure-theoretic stratification was also obtained in \cite{han2025structure}.

We first estimate the Hausdorff dimension of singular sets. 
As observed in \cite{han2025structure}, the method of \cite{burago1992ad} for Alexandrov spaces with curvature bounded below implicitly relies on the metric cone structure of tangent cones. 
This approach therefore does not apply directly in our setting, since tangent cones need not be metric cones and the relevant notions of angle need not coincide.
Instead, we follow the approach of \cite{han2025structure}, which is based on the quasi-metric structure and uniform compactness of spaces of directions with common length, together with a technical estimate quantifying how side-length ratios control the discrepancy between angles viewed from a fixed point and angles of common scale, when the latter are small.

We need the following two technical lemmas.
The first one is taken from \cite[Lemma 7.11]{han2025structure}, and is adapted from \cite[Lemma 10.2]{lytchak2019geod} and \cite[Lemma 10.3]{burago1992ad}. 
It will be used to bound the maximal cardinality of separated subsets of singular sets.
\begin{lemma}\label{lemma:number_points}
    For every $N, L \geq 1$ and every natural number $M \geq 1$, there exists a constant $\bar{K} = \bar{K}(N, L, M)$ with the following property: if $X$ is an $N$-doubling metric space and $E \subset X$ is an $r$-separated subset for some $r>0$ containing at least $\bar{K}$ elements, then there exist $M$ distinct points $\{x_i\}_{i=0}^{M-1} \subset E$ such that $|x_{i+1} x_0| \geq L |x_i x_0|$ for each $i = 1, \ldots, M-2$.
\end{lemma}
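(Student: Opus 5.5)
The plan is to prove this purely metric statement by a direct greedy construction based at a well-chosen point $x_0$, with no induction. The doubling property is used only once, at the end, to bound the size of a set whose points are all far apart relative to its diameter. The $r$-separation of $E$ is not needed beyond the fact that its points are distinct, so the constant will depend only on $N$, $L$ and $M$.

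The cases $M\le 2$ are trivial: two distinct points suffice, and there is no condition to check. Assume $M\ge 3$ and pass to a finite subset of $E$ with exactly $\bar K$ points, still called $E$. Let $\Delta:=\operatorname{diam}E>0$.

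\textbf{Step 1: the greedy chain.} Fix a candidate base point $x_0\in E$ and set $D(x_0):=\max_{y\in E}|yx_0|$, noting $D(x_0)\ge \Delta/2$. Build the chain from the outside in.
\begin{itemize}
\item Let $x_{M-1}$ be a point of $E$ realizing $D(x_0)$.
\item Given $x_{i+1}$, let $x_i$ be a point of $E\setminus\{x_0\}$ in the closed ball $\bar B(x_0,|x_{i+1}x_0|/L)$ that is farthest from $x_0$, provided such a point exists.
\end{itemize}
By construction $|x_{i+1}x_0|\ge L|x_ix_0|$, and the points are distinct since their distances to $x_0$ strictly decrease (recall $L\ge1$, and equality can be handled by excluding already chosen points). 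If the construction reaches $x_1$, the required $M$ points exist.

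\textbf{Step 2: what failure means.} If the construction fails at some stage, then for some $i\in\{1,\dots,M-2\}$ the ball $\bar B(x_0,|x_{i+1}x_0|/L)$ contains no point of $E$ other than $x_0$. Iterating $|x_{j}x_0|\ge D(x_0)/L^{M-1-j}$ along the successfully chosen points gives $|x_{i+1}x_0|\ge D(x_0)/L^{M-2}$. Hence the nearest-neighbour distance satisfies
\[
n(x_0):=\min_{y\in E\setminus\{x_0\}}|yx_0|\;>\;\frac{D(x_0)}{L^{M-1}}\;\ge\;\frac{\Delta}{2L^{M-1}}.
\]

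\textbf{Step 3: counting when every base point fails.} Suppose the construction fails for every choice of $x_0\in E$. Then $E$ is $s$-separated with $s:=\Delta/(2L^{M-1})$. It is also contained in a ball of radius $\Delta$ centered at any of its points. Applying the $N$-doubling property $m$ times shows that such a ball is covered by $N^m$ balls of radius $\Delta/2^m$. Choose $m:=\lceil\log_2(4L^{M-1})\rceil$, so that $\Delta/2^m\le s/2$. Each small ball then has diameter at most $s$ and hence, being an open ball of radius at most $s/2$, contains at most one point of $E$. Therefore
\[
|E|\le N^{\lceil\log_2(4L^{M-1})\rceil}.
\]

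\textbf{Conclusion.} Set $\bar K(N,L,M):=N^{\lceil\log_2(4L^{M-1})\rceil}+1$. Then not every base point can fail, and the construction in Step 1 succeeds for some $x_0$, which proves the lemma.

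The only delicate point is the bookkeeping in Step 2. One must use closed balls and "farthest point" selections consistently so that the failure condition really yields the strict lower bound on $n(x_0)$ in terms of $D(x_0)$, and one must check the exponent $M-1$ against the index range $i=1,\dots,M-2$. I would verify this bookkeeping carefully. The doubling count in Step 3 is routine.
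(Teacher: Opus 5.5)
Your Step~1 is fine (the farthest-point greedy is even optimal for deciding whether a chain exists at $x_0$). The argument breaks in Step~2, and this is not a bookkeeping issue. The greedy rule only gives \emph{upper} bounds: $x_j\in\bar B(x_0,|x_{j+1}x_0|/L)$, hence $|x_jx_0|\le D(x_0)/L^{M-1-j}$. The lower bound $|x_jx_0|\ge D(x_0)/L^{M-1-j}$ that you iterate has no justification, because the farthest point of $E\setminus\{x_0\}$ in that ball can be much closer to $x_0$ than the radius. For example, take $E=\{0,1,10^6\}\subset\mathbb{R}$, $L=2$, $M=4$ and base point $0$. The greedy picks $10^6$, then $1$, and then fails, yet $n(0)=1$ while $D(0)/L^{M-1}=125000$. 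What failure at $x_0$ really gives is that $E\setminus\{x_0\}$ lies in at most $M-2$ annuli $\{d_l/L<|yx_0|\le d_l\}$. These annuli can sit at unrelated scales, and each can contain many points at tiny mutual distances. So no separation of $E$ at scale $\Delta/L^{M-1}$ follows, and Step~3 collapses. In fact your constant is false. Any valid $\bar K$ must be at least $M$, but for $L=1$ your $\bar K=N^2+1$ does not depend on $M$. If $N$ is a doubling constant of $\mathbb{R}$ and $M-1\ge N^2+1$, then $M-1$ consecutive integers form a $1$-separated set with at least $\bar K$ points and no $M$ distinct points.

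The missing idea is that you need points at $M-1$ genuinely different scales as seen from a single base point. Comparing diameter with separation cannot produce this; you have to recurse into a cluster. The paper does not reprove the lemma; it imports it from \cite[Lemma 7.11]{han2025structure}. A correct route is induction on $M$, with $\bar K=M$ for $M\le 2$:
\begin{enumerate}
\item Reduce to finite $E$ with diameter $\rho>0$, so that $E\subset B(y,2\rho)$ for any $y\in E$.
\item Iterate the doubling property to cover this ball by $N^m$ balls of radius at most $\rho/(8L)$, where $m=\lceil\log_2(16L)\rceil$.
\item By pigeonhole, one of these balls contains at least $|E|/N^m$ points of $E$. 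The induction hypothesis then gives $x_0,\dots,x_{M-2}$ in that ball with the chain property and $|x_ix_0|<\rho/(4L)$.
\item One endpoint of a pair realizing $\rho$ lies at distance at least $\rho/2$ from $x_0$. Call it $x_{M-1}$. Then $|x_{M-1}x_0|\ge\rho/2>L|x_{M-2}x_0|$, and $x_{M-1}$ is distinct from the earlier points.
\end{enumerate}
This shows that $\bar K(N,L,M):=N^m\bar K(N,L,M-1)$ works. Your remark that $r$-separation is only needed for distinctness is correct.
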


The second lemma, taken from \cite[Lemma 7.12]{han2025structure}, provides quantitative control of the discrepancy between angles viewed from a point and angles of common scale, when the latter are small.
We note that \cite[Lemma 7.12]{han2025structure} was established for $S$-concave Busemann concave spaces without assuming local semi-convexity.
Recall that, for $\delta\in (0, 1)$, the beta number $\beta(\delta)$\footnote{This beta number is unrelated to the cardinality function defined in Section~\ref{subsect:basics}.} is defined as
\begin{equation}\label{def:beta_delta_num}
    \beta(\delta):=\frac{(1-\cos\delta)\sin\delta}{2(1+\sin\delta)}.
\end{equation}
Note that $\beta(\delta)< 1/4$ and $\beta(\delta)=O(\delta^3)$ as $\delta\to 0$.
\begin{lemma}\label{lemma:quantitive_discrepancy_angles}
    Let $X$ be an $S$-concave Busemann concave space, and let $0<\delta <1/2$.
    Then there exists a constant $L_0 = L_0(\delta, S) > 1$ such that the following holds: let $x,y,z\in X$ be points with $|xz|/|xy| \geq L_0$, and let $\gamma, \eta$ be unit-speed geodesics from $x$ to $y$ and to $z$, respectively. 
    If $\angle_x(\gamma(r), \eta(r)) < \beta(\delta)$ for some $r > 0$, then $\tilde{\angle}_x(y,z)< \delta$.
    Moreover, $z$ is a $(1, 2\delta)$-strainer at $y$ with opposite strainer $x$.
\end{lemma}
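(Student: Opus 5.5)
The plan is to use Busemann concavity to turn the smallness of the common-scale angle into a distance estimate at the scale $|xy|$. That estimate gives the bound on $\tilde{\angle}_x(y,z)$ through the Euclidean law of cosines. The strainer assertion then follows from elementary plane trigonometry once $L_0$ is large.

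First, since $\angle_x(\gamma(r),\eta(r))$ is scale-invariant and, by Busemann concavity, equals $\lim_{t\searrow 0}$ (equivalently $\sup_t$) of $\tilde{\angle}_x(\gamma(t),\eta(t))$, we get $\tilde{\angle}_x(\gamma(t),\eta(t))<\beta:=\beta(\delta)$ for every $t\le |xy|\le|xz|$. Taking $t=|xy|$ and setting $w:=\eta(|xy|)$, the isosceles law of cosines gives
\[
|yw| = 2|xy|\sin\bigl(\tilde{\angle}_x(y,w)/2\bigr)\le \beta|xy|.
\]
The triangle inequality then yields $|yz|\le |yw|+|wz|\le |xz|-(1-\beta)|xy|$. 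Write $a:=|xz|$, $b:=|xy|$ and $\theta:=\tilde{\angle}_x(y,z)$. Squaring and comparing with $|yz|^2=a^2+b^2-2ab\cos\theta$ gives
\[
2ab(1-\cos\theta)\le 2\beta ab+\bigl((1-\beta)^2-1\bigr)b^2\le 2\beta ab,
\]
so that $1-\cos\theta\le\beta$. Because $\beta(\delta)=\tfrac{(1-\cos\delta)\sin\delta}{2(1+\sin\delta)}<1-\cos\delta$, this forces $\theta<\delta$. This part does not even use $L_0$; it only needs $|xz|\ge|xy|$.

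For the second assertion, I would check the conditions of Definition~\ref{def:1_strainer} for the pair $(z,x)$ at $y$. By the triangle inequality, $|yz|\ge|xz|-|xy|\ge(L_0-1)|xy|$. Choosing $L_0=L_0(\delta,S)\ge 2$ large therefore makes $|xy|<|yz|$. It also makes $\delta_S(|xy|;|yz|)=\arccos\bigl(1-\tfrac{(S-1)|xy|}{2|yz|}\bigr)\le\arccos\bigl(1-\tfrac{S-1}{2(L_0-1)}\bigr)<2\delta$.

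For the angle at $y$, I would use the Euclidean comparison triangle $\tilde\Delta xyz$, whose angles sum to $\pi$. The angle at $x$ is $\theta<\delta$ by the first part. The angle at $z$, opposite the short side $|xy|$, satisfies $\sin\tilde{\angle}_z(x,y)\le |xy|/|yz|\le 1/(L_0-1)$. Since it is not the largest angle, it is at most $\arcsin(1/(L_0-1))$, and we enlarge $L_0$ so that this is below $\delta$. Hence $\tilde{\angle}xyz>\pi-2\delta$, which is the required strainer inequality.

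The only delicate point is the first step: justifying that the common-scale angle bounds the comparison angle at the specific scale $t=|xy|$, including when $|xy|$ exceeds $a$ in Definition~\ref{def:angles_of_fixed_scales}. This is handled by monotonicity of $t\mapsto|\gamma(t)\eta(t)|/t$, which holds on the whole common domain $[0,|xy|]$ of $\gamma$ and $\eta$ (note $|xz|\ge|xy|$). Once that is in place, the rest is routine trigonometry.
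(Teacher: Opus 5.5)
Your proof is correct, and it takes a different route from the paper's: you give a self-contained argument where the paper defers to earlier work. The paper does not reprove the lemma. It takes the choice of $L_0$ and the proof of the first assertion verbatim from \cite[Lemma 7.12]{han2025structure}. For the strainer assertion it only observes that $\delta_S$ is dominated by the error function $\bar{\delta}_S(\tau;t)=\arccos(1-S\tau/(2t))$ used there. Hence the same $L_0$ still yields $|xy|<|yz|$, $\delta_S(|xy|;|yz|)<\delta$ and $\tilde{\angle}zyx>\pi-2\delta$.

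You instead argue from scratch. Monotonicity of $t\mapsto|\gamma(t)\eta(t)|/t$ transfers the common-scale bound to the single scale $t=|xy|$. The triangle inequality through $w=\eta(|xy|)$ and the law of cosines then give $1-\cos\tilde{\angle}_x(y,z)\le\beta(\delta)<1-\cos\delta$. Elementary trigonometry in $\tilde{\Delta}xyz$ gives the strainer conditions.

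Your route makes the role of the constants transparent. The first assertion holds whenever $|xz|\ge|xy|$ and for any threshold below $1-\cos\delta$, so the particular form of $\beta(\delta)$ is irrelevant there. $L_0$ is needed only to secure $|xy|<|yz|$ and the smallness of $\delta_S(|xy|;|yz|)$. The paper's route keeps the constants $L_0$ and $\beta(\delta)$ already fixed in the earlier work, which are reused in Lemma~\ref{lemma:singular_set_estimate}. It also gives the slightly stronger bound $\delta_S(|xy|;|yz|)<\delta$.

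Two small remarks on your last step. First, strictness of $|xy|<|yz|$ needs $L_0>2$ rather than $L_0\ge 2$; this is harmless, since your later requirements force $L_0$ larger anyway. Second, the $\arcsin$ bound is unnecessary. Once $|xy|<|yz|$, the comparison angle at $z$ is smaller than the one at $x$ (larger side opposite larger angle), hence already below $\delta$.
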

\begin{proof}
  Fix $0<\delta<1/2$.
  The choice of $L_0$ and the proof of the first assertion are exactly the same as in \cite[Lemma 7.12]{han2025structure}.
  For the second assertion, observe that $\delta_S$ is bounded above by the error function $\bar{\delta}_S(\tau;t):=\arccos(1-S\tau/(2t))$ used there.
  Hence, the same argument shows that our choice of $L_0$ implies 
  \begin{equation}
    \tilde{\angle}zyx>\pi-2\delta,\quad 
    \delta_{S}(|xy|;|yz|)< \bar{\delta}_S(|xy|;|yz|)<\delta,
    \quad\text{and}\quad
    |xy|<|yz|.
  \end{equation}
  By the definition of strainers, $z$ is therefore a $(1,2\delta)$-strainer at $y$ with opposite strainer $x$.
\end{proof}

We are now ready to state the key lemma of this subsection, which estimates the maximal cardinality of separated subsets of singular sets inside a cylindrical region.
Before doing so, we recall the following variant of the notion of \emph{$R$-long strainers}, originally introduced in \cite{burago1992ad} and used in \cite{han2025structure}.

\begin{definition}[$R$-long strainers]
    Given $k\in \mathbb{N}$ and $\delta,\alpha,R>0$, a $(k,\delta,\alpha)$-strainer $(p_1,\ldots,p_k)$ at $x$ with opposite strainer $(q_1,\ldots,q_k)$ is said to be \emph{$R$-long} if
    \begin{equation}
      \min_{i=1,\ldots,k}\{|p_ix|,|q_ix|\}\geq\delta^{-1}R.
    \end{equation}
    We denote by $\mathcal{A}(k,\delta,\alpha; R)\subset \mathcal{A}(k,\delta,\alpha)$ the set of all $(k,\delta,\alpha)$-strained points which admit an $R$-long $(k,\delta,\alpha)$-strainer.
\end{definition}

We now state the lemma.
Its proof is almost identical to that of \cite[Lemma 7.15]{han2025structure}.
The only substantive difference occurs in Step~2: the distance-angle estimates
coming from the geometry of cylindrical regions first give the almost
orthogonality of one comparison angle, and the weak quadruple condition is then
used, in place of the local semi-convexity assumption, to recover the remaining comparison angle estimates between strainer pairs.
Recall that for $E\subset X$, the quantity $\beta_E(r)$ denotes the largest possible cardinality of a maximal $r$-separated subset of $E$.

\begin{lemma}\label{lemma:singular_set_estimate}
    Let $X$ be an $n$-dimensional $S$-concave Busemann concave space with $S\geq 1$.
    Fix $k\in \{1,\ldots,n-1\}$, and let $\tilde{\varepsilon}\in [0,\delta_{k})$ and $\tilde{\delta}\in (0,\pi]$ be such that $X$ satisfies the $(\tilde{\varepsilon}, \tilde{\delta})$-weak quadruple condition.
    Suppose that $(p_1,\ldots, p_k)$ is an $R$-long $(k,\delta,\alpha)$-strainer on a neighborhood $U$ of $x\in X$ with common opposite strainer $(q_1,\ldots,q_k)$, where $0<\delta< \min\{\tilde{\delta},1/2\},\ \alpha>0$.
    Then there exists $r_x>0$ such that $B(x,r_x)\subset U$ and, for every $0<r<\delta r_x$ and every $m=(m_1,\ldots,m_k)\in \mathbb{Z}^k$, one has
    \begin{equation}\label{eq:singular_set_estimate_00}
        \beta_{D_x(r,m)\setminus \mathcal{A}\left(k+1,\delta,\max\{\alpha, 3\delta + \tilde{\varepsilon}\};\, r \right)}\left(\delta^{-1}r \right)
        \leq
        K(N,\delta,S),
    \end{equation}
    where $D_x(r,m)$ is an $r$-cylindrical region in $B(x,r_x)$ defined as
    \begin{equation}\label{eq:cylindrical_region}
        D_x(r,m):=\{z\in B(x,r_x)\colon 0.1(m_i-1)r\leq |p_ix|-|p_iz|\leq 0.1m_i r,\ i=1,\ldots,k\}.
    \end{equation}
    The constant $K$ depends only on the doubling constant $N$ of $X$, $\delta$, and $S$.
\end{lemma}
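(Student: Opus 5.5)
We follow the structure of \cite[Lemma 7.15]{han2025structure} and argue by contradiction, using the doubling property from Proposition~\ref{prop:MCP} together with Lemma~\ref{lemma:number_points}. First choose $r_x>0$ so small that $B(x,2r_x)\subset U$, that $(p_1,\ldots,p_k)$ is a $(k,\delta,\alpha)$-strainer on $B(x,2r_x)$ with common opposite strainer $(q_1,\ldots,q_k)$, and that $4r_x$ is tiny compared with $\min_i\{|p_ix|,|q_ix|\}$, both in ratio and through $\delta_S$. The last condition is the analogue of \eqref{eq:bi_Lipschitz_00}.

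Fix $L_0=L_0(\delta,S)$ from Lemma~\ref{lemma:quantitive_discrepancy_angles}. Using Lemma~\ref{lem:uniform_compactness_space_of_directions_common_length}, let $N_0=N_0(\beta(\delta)/4)$ bound the cardinality of $\beta(\delta)/4$-separated sets in $(\Sigma^l_zX,\angle_z)$. Set $M:=N_0+2$ and $K:=\bar K(N,L_0,M)$ via Lemma~\ref{lemma:number_points}. Suppose a $\delta^{-1}r$-separated set $E\subset D_x(r,m)\setminus\mathcal A(k+1,\delta,\max\{\alpha,3\delta+\tilde\varepsilon\};r)$ has at least $K$ points. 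Lemma~\ref{lemma:number_points} then yields $x_0,\ldots,x_{M-1}\in E$ with $|x_{i+1}x_0|\ge L_0|x_ix_0|$.

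Take geodesics from $x_0$ to each $x_i$ and look at the corresponding directions in $\Sigma^l_{x_0}X$ at a common scale $l$. By the pigeonhole principle with $N_0$, two indices $i<j$ satisfy $\angle_{x_0}(\gamma_i(l),\gamma_j(l))<\beta(\delta)$. Here we use the quasi-metric relaxed triangle inequality and the density of $\hat\Sigma$, which is why the separation is taken at $\beta(\delta)/4$. Lemma~\ref{lemma:quantitive_discrepancy_angles} then says that $x_j$ is a $(1,2\delta)$-strainer at $x_i$ with opposite strainer $x_0$. We need a $(1,\delta)$-pair, so we instead run the argument with $\delta/2$ in place of $\delta$ in that lemma; this only changes $L_0$. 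The pair is $R$-long with $R=r$: the $\delta^{-1}r$-separation gives $|x_ix_0|\ge\delta^{-1}r$, and $|x_jx_i|\ge|x_ix_0|$. The remaining length requirements of Definition~\ref{def:k_strainer}, namely $|q_kx_i|>|x_jx_i|$ and the $\delta_S$ bound, follow from the choice of $r_x$, since all points lie in $B(x,r_x)$.

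The main step is almost orthogonality between $(x_j,x_0)$ and each pair $(p_l,q_l)$ at $x_i$. All the points lie in the cylinder $D_x(r,m)$, so $\big||p_lx_i|-|p_lx_0|\big|\le 0.1r\le 0.1\delta|x_ix_0|$. Applying the Euclidean law of cosines as in Step~3 of Proposition~\ref{prop:bi_Lipschitz} gives $|\cos\tilde\angle p_lx_ix_0|\lesssim 0.1\delta+|x_ix_0|/|p_lx_i|<\delta/4$, hence $|\tilde\angle p_lx_ix_0-\pi/2|<\delta/2$. We then transfer this estimate to the other three comparison angles using the angle-sum bounds \eqref{eq:bi_Lipschitz_0}. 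Those bounds come from the $(\tilde\varepsilon,\tilde\delta)$-weak quadruple condition applied to $(x_i;p_l,q_l,x_0)$ and $(x_i;p_l,q_l,x_j)$, together with the near-straightness $\tilde\angle x_jx_ix_0>\pi-\delta$, which controls the sum $\tilde\angle p_lx_ix_j+\tilde\angle p_lx_ix_0$ from above.

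The obstacle is the bookkeeping of constants. The target orthogonality is $3\delta+\tilde\varepsilon$ rather than $5\delta+2\tilde\varepsilon$, so we cannot simply chain the adjacent-angle estimate and then the quadruple estimate. The plan is to obtain $|\tilde\angle p_lx_ix_0-\pi/2|<\delta/2$ and $|\tilde\angle q_lx_ix_0-\pi/2|<\delta/2$ directly from the cylinder condition, using $|q_lx_i|\approx|q_lx_0|$ via the triangle inequality. Then a single application of \eqref{eq:bi_Lipschitz_0}, combined with the adjacent sum near $\pi$, gives the $x_j$-angles within $3\delta+\tilde\varepsilon$. If the $q_l$-estimate from the cylinder turns out to be too weak, we would use the $p_l$ estimate plus one weak quadruple step, at a cost of $5\delta/2+\tilde\varepsilon$. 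Once orthogonality is established, $x_i$ lies in $\mathcal A(k+1,\delta,\max\{\alpha,3\delta+\tilde\varepsilon\};r)$, which contradicts $x_i\in E$ and proves the bound with $K=K(N,\delta,S)$.
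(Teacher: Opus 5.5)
\textbf{There is a genuine gap, in the almost-orthogonality step.} Your setup agrees with the paper: the choice of $r_x$, Lemma~\ref{lemma:number_points}, the pigeonhole in $\Sigma^l_{x_0}X$, and Lemma~\ref{lemma:quantitive_discrepancy_angles} applied with $\delta/2$. The factor $1/4$ and the density of $\hat{\Sigma}$ are unnecessary, since the directions $(\bar{\xi}_i,l)$ already lie in $\hat{\Sigma}^l_{x_0}X$ and more than $N_0(\beta)$ of them cannot be $\beta$-separated.

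Your transfer from $x_0$ to $x_j$ assumes that the comparison straightness $\tilde{\angle}x_jx_ix_0>\pi-\delta$ bounds $\tilde{\angle}p_lx_ix_j+\tilde{\angle}p_lx_ix_0$ above by $\pi+O(\delta)$. Nothing available gives this.
\begin{itemize}
\item Lemma~\ref{lem:angle_viewed_from_fixed_point}\ref{item:sum_adjacent_angles} requires $x_i$ to be an interior point of a geodesic. That is why $z$ was chosen on $\gamma$ in Proposition~\ref{prop:bi_Lipschitz}. Here $x_i$ is an arbitrary point of a separated set, and $(x_j,x_0)$ is straight only in the comparison sense.
\item The weak quadruple condition does not help either. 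Your bound is equivalent to a quadruple inequality $\tilde{\angle}x_jx_ix_0+\tilde{\angle}p_lx_ix_j+\tilde{\angle}p_lx_ix_0\le 2\pi+O(\delta)$ for $(x_i;p_l,x_j,x_0)$. In that quadruple the nearly collinear pair is the two points \emph{closest} to the base point and $p_l$ is the farthest. This is exactly the configuration the remark after Proposition~\ref{lemma:verification_WQC_II} leaves open.
\item Your plan A fails independently. The cylinder constrains only $\mathsf d_{p_l}$, and the triangle inequality gives merely $\bigl||q_lx_i|-|q_lx_0|\bigr|\le |x_ix_0|$, which says nothing about $\cos\tilde{\angle}q_lx_ix_0$. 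Knowing $|q_lx_i|-|q_lx_0|=o(|x_ix_0|)$ is essentially the almost orthogonality you are trying to prove.
\end{itemize}

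\textbf{The fix: no transfer between $x_0$ and $x_j$ is needed.} Both points lie in the same cylinder $D_x(r,m)$ as $x_i$, at distance at least $\delta^{-1}r$ from it. So the law-of-cosines estimate applies to each separately and gives $|\tilde{\angle}p_lx_ix_j-\pi/2|<\delta$ and $|\tilde{\angle}p_lx_ix_0-\pi/2|<\delta$.

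For the $q_l$-angles, work at the \emph{other} vertex. Since $|x_ix_j|\le 0.01\delta\min\{|p_lx_i|,|q_lx_i|\}$, the comparison triangles are thin. This gives $\tilde{\angle}p_lx_jx_i>\pi/2-3\delta/2$ and $\tilde{\angle}q_lx_ix_j+\tilde{\angle}q_lx_jx_i>\pi-\delta/2$. The weak quadruple condition at $(x_j;p_l,q_l,x_i)$, which is an admissible configuration, then yields $\tilde{\angle}q_lx_jx_i<\pi/2+5\delta/2+\tilde{\varepsilon}$. Hence $\tilde{\angle}q_lx_ix_j>\pi/2-3\delta-\tilde{\varepsilon}$. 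The same argument with $(x_0;p_l,q_l,x_i)$ handles $\tilde{\angle}q_lx_ix_0$.

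Equivalently, you can apply \eqref{eq:bi_Lipschitz_0} to the pairs $(x_i,x_j)$ and $(x_i,x_0)$; Step~2 of Proposition~\ref{prop:bi_Lipschitz} never uses that $z'$ lies on a geodesic. Since Definition~\ref{def:k_strainer} only asks for lower bounds, this gives exactly the constant $3\delta+\tilde{\varepsilon}$. The straightness of $(x_j,x_0)$ is used only to make it a $(1,\delta)$-strainer pair.
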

\begin{proof}
  Fix $k\in \{1,\ldots,n-1\}$, $R>0$, and $0<\delta<\min\{\tilde{\delta},1/2\}$.
  We choose $r_x>0$ as follows.
  First choose $r_0>0$ small enough such that $B(x,r_0)\subset U$, $r_0\leq R/2$ and
  \begin{equation}\label{eq:singular_set_estimate_1}
    \max\{4r_0/|q_kx|, \delta_S(4r_0;|q_kx|) \}<0.01\delta.
  \end{equation} 
  Set $r_x:=r_0/2\leq R/4$.
  By this choice of $r_x$, for any $y,z\in B(x,r_x)$ we have
  \begin{equation}
    |yz|\leq 0.01\delta\min\{|p_iy|,|q_iy|\},\quad i=1,\ldots,k.
  \end{equation}

  Fix $r\in (0,\delta r_x)$ and $m\in \mathbb{Z}^k$.
  For notational simplicity, set
  \begin{equation*}
    \mathcal{A}:=
    \mathcal{A}(k+1,\delta, \max\{\alpha, 3\delta + \tilde{\varepsilon}\};\, r).
  \end{equation*}
  We will prove, by contradiction, that the largest possible cardinality of a maximal $\delta^{-1}r$-separated subset of $D_x(r,m)\setminus \mathcal{A}$ is bounded above by
  \begin{equation}
    K(N,\delta,S):=\bar{K}(N,L_0,M)-1.
  \end{equation}
  Here $\bar{K}$ is the constant from Lemma~\ref{lemma:number_points}, $N$ is the doubling constant of $X$ implied by Proposition~\ref{prop:MCP}, $L_0:=L_0(\delta/2,S)$ is the constant from Lemma~\ref{lemma:quantitive_discrepancy_angles}, and $M:=M(\delta)= N_0(\beta(\delta/2))+2$, where $N_0$ is the constant from Lemma~\ref{lem:uniform_compactness_space_of_directions_common_length} depending on the doubling constant $N$, and $\beta$ is defined in \eqref{def:beta_delta_num}.
  
  Suppose that the claim above fails.
  Then there exists a maximal $\delta^{-1}r$-separated subset of $D_x(r,m)\setminus \mathcal{A}$ with at least $\bar{K}(N,L_0,M)$ elements.
  By Lemma~\ref{lemma:number_points}, we can choose $M$ points $\{x_i\}_{i=0}^{M-1}$ in this maximal $\delta^{-1}r$-separated subset such that $|x_{i+1}x_0|\geq L_0|x_ix_0|$ for $i=1,\ldots,M-2$.
  By Lemma~\ref{lem:uniform_compactness_space_of_directions_common_length} and the choice of $M=N_0(\beta(\delta/2))+2$, there exist two indices $1\leq i<j\leq M-1$ such that
  \begin{equation}
    \angle_{x_0}\left((\bar{\xi}_i,l), (\bar{\xi}_j,l)\right)
    =
    \angle_{x_0}\left( \xi_i(l), \xi_j(l) \right) < \beta\left(\delta/2\right),
  \end{equation}
  where $\xi_i,\xi_j$ are unit-speed geodesics from $x_0$ to $x_i$ and $x_j$, respectively, $\bar{\xi}_i,\bar{\xi}_j$ are their maximal extensions, and $l>0$ can be chosen arbitrarily, thanks to the positive scaling-invariance of angles of fixed scales; see Section~\ref{subsect:notions_of_angles}.
  Lemma~\ref{lemma:quantitive_discrepancy_angles} then implies that $x_j$ is a $(1,\delta)$-strainer at $x_i$ with opposite strainer $x_0$.
  Moreover, $(x_j,x_0)$ is an $r$-long strainer pair, since $x_0,x_i,x_j$ belong to a $\delta^{-1}r$-separated subset of $D_x(r,m)\setminus \mathcal{A}$.
  
  In the following, we show that
  \begin{equation*}
    \pmb{p}:=(p_1,\ldots,p_k,x_j)
  \end{equation*}
  is an $r$-long $(k+1, \delta, \max\{\alpha, 3\delta + \tilde{\varepsilon}\})$-strainer at $x_i$ with the opposite strainer
  \begin{equation*}
    \pmb{q}:=(q_1,\ldots,q_k,x_0).
  \end{equation*}
  This contradicts the choice of $x_i$, since $x_i\notin \mathcal{A}(k+1,\delta,\max\{\alpha, 3\delta + \tilde{\varepsilon}\};\, r)$.

  \begin{enumerate}[fullwidth, label=\textbf{Step \arabic*:}]
    \item In this step, we verify the first two conditions in Definition~\ref{def:k_strainer}.
    First, $(p_1,\ldots,p_k)$ is clearly an $r$-long $(k,\delta,\alpha)$-strainer at $x_i$, since it is an $R$-long $(k,\delta,\alpha)$-strainer on $U$ by the assumption of the lemma, and $r<R$.
    Second, we have already shown that $x_j$ is an $r$-long $(1,\delta)$-strainer at $x_i$ with opposite strainer $x_0$.
    Finally, by the choice of $r_x$ and \eqref{eq:singular_set_estimate_1}, it follows that $|q_kx_i|> r_0>|x_ix_j|$ and
    \begin{equation}
      \delta_S(|x_jx_i|;|q_kx_i|)
      \leq
      \delta_S(r_0; |q_kx|-r_0)
      \leq
      \delta_S(2r_0; |q_kx|)
      <\delta.
    \end{equation}
    Thus the first two conditions in Definition~\ref{def:k_strainer} are satisfied for $\pmb{p}$ at $x_i$ with opposite strainer $\pmb{q}$.

    \item It remains to verify the almost orthogonality condition in Definition~\ref{def:k_strainer} for the comparison angles $\tilde{\angle}p_lx_ix_j, \tilde{\angle}q_lx_ix_j, \tilde{\angle}p_lx_ix_0$ and $\tilde{\angle}q_lx_ix_0$ for $l=1,\ldots,k$.
    For notational simplicity, fix $l\in \{1,\ldots, k\}$ and denote $p:=p_l$ and $q:=q_l$.
    
    We first verify the almost orthogonality of $\tilde{\angle}px_ix_j$.
    Applying the Euclidean law of cosines to the comparison triangle $\tilde{\Delta}px_ix_j$, we obtain
    \begin{align}\label{eq:prop_H_dim_1}
      \cos\tilde{\angle}px_ix_j
      &=
      \frac{|px_i|^2+|x_ix_j|^2-|px_j|^2}{2|px_i||x_ix_j|}\nonumber\\
      &=
      \frac{\left(|px_i|+|px_j|\right)\left(|px_i|-|px_j|\right)}{2|px_i||x_ix_j|}
      +
      \frac{|x_ix_j|}{2|px_i|}.
    \end{align}
    Following exactly the same argument as in (7.32) and (7.33) of Step 2 in \cite[Lemma 7.15]{han2025structure}, we obtain
    \begin{equation}
        \left|\cos\tilde{\angle}px_ix_j \right|\leq \delta/2.
    \end{equation}
    We note that this argument uses only the geometry of the cylindrical region $D_x(r,m)$ and the fact that $x_i,x_j,x_0\in D_x(r,m)\setminus \mathcal{A}$, the distance assumption $|x_ix_j|\geq \delta^{-1}r$, and the relationship among $r, r_0$ and $R$; it does not rely on local semi-convexity.
    Consequently,
    \begin{equation}
        |\tilde{\angle}px_ix_j - \pi/2|< \delta.
    \end{equation}
    
    We next consider the comparison angle $\tilde{\angle}qx_ix_j$.
    By the choice of $r_x$, we have
    \begin{equation}
      |x_ix_j|\leq 0.01\delta \min\{|px_i|,|px_j|, |qx_i|,|qx_j|\}.
    \end{equation}
    This implies that
    \begin{equation}\label{eq:singular_set_estimate_3}
      \tilde{\angle}px_i x_j + \tilde{\angle}px_jx_i>\pi-\delta/2,\quad
      \tilde{\angle}qx_ix_j + \tilde{\angle}qx_jx_i>\pi-\delta/2. 
    \end{equation}
    Combining the first inequality in \eqref{eq:singular_set_estimate_3} with $|\tilde{\angle}px_ix_j-\pi/2|< \delta$, we get
    \begin{equation}
      \tilde{\angle}px_jx_i > \pi - \delta/2 - (\pi/2 + \delta)= \pi/2 - 3\delta/2.
    \end{equation}
    Applying the $(\tilde{\varepsilon}, \tilde{\delta})$-weak quadruple condition to the quadruple $(x_j; p,q,x_i)$ gives
    \begin{multline}
      \tilde{\angle}qx_jx_i
      \leq
      2\pi + \tilde{\varepsilon} - \tilde{\angle}px_jq - \tilde{\angle}px_jx_i
      <
      2\pi + \tilde{\varepsilon} -(\pi - \delta) - (\pi/2 - 3\delta/2)\\
      =
      \pi/2 + \tilde{\varepsilon} + 5\delta/2.
    \end{multline}
    Substituting this into the second inequality in \eqref{eq:singular_set_estimate_3}, we obtain
    \begin{equation}
      \tilde{\angle}qx_i x_j
      >
      \pi - \delta/2 - \tilde{\angle}qx_jx_i
      >
      \pi/2  - \tilde{\varepsilon} - 3\delta.
    \end{equation}
    Similarly, by applying the same argument to the quadruple $(x_i; p,q,x_0)$, we also obtain $|\tilde{\angle}px_ix_0-\pi/2|<\delta$ and $\tilde{\angle}qx_ix_0> \pi/2 - \tilde{\varepsilon} - 3\delta$.
  \end{enumerate}

  Combining these estimates, we conclude that $\pmb{p}$ is an $r$-long $(k+1,\delta,\max\{\alpha, 3\delta + \tilde{\varepsilon}\})$-strainer at $x_i$ with the opposite strainer $\pmb{q}$, which is a contradiction.
  Therefore, our claim follows, and \eqref{eq:singular_set_estimate_00} holds.
\end{proof}

We now apply this lemma to estimate the Hausdorff dimension of singular sets.

\begin{theorem}[Hausdorff dimension of singular sets]\label{thm:singular_set_dimension}
    Let $X$ be an $n$-dimensional $S$-concave Busemann concave space with $S\geq 1$.
    Then, for every $0<\delta<1/2$, the set $X\setminus \mathcal{A}(1,\delta)$ is discrete, and $\dim_{H}(X\setminus \mathcal{A}(k,\delta,\delta))\leq k-1$ for $k=1,\ldots,n$. 
\end{theorem}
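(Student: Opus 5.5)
The plan is to follow the Burago--Gromov--Perelman counting scheme, with Lemma~\ref{lemma:singular_set_estimate} as the local cardinality bound. The argument has two parts: discreteness of $X\setminus\mathcal{A}(1,\delta)$, and the dimension bound for $k\geq 1$. Since $\mathcal{A}(k,\delta,\delta)$ decreases as $\delta$ decreases, it suffices to prove the dimension bound for small $\delta$. For larger $\delta$ we use the inclusion $\mathcal{A}(k,\delta',5\delta'+2\varepsilon)\subset\mathcal{A}(k,\delta,\delta)$ for suitable $\delta',\varepsilon$, exactly as at the end of Theorem~\ref{thm:top_manifold_part}.

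\emph{Discreteness.} Fix $x\in X$ and use the Lemma~\ref{lemma:quantitive_discrepancy_angles} mechanism with no base strainer. Suppose infinitely many points of $X\setminus\mathcal{A}(1,\delta)$ accumulate in a small ball around $x$, or more generally lie in a bounded set. By doubling (Proposition~\ref{prop:MCP}), Lemma~\ref{lemma:number_points} gives points $x_0,\dots,x_{M-1}$ with rapidly growing distances from $x_0$, where $M=N_0(\beta(\delta/2))+2$ and the ratio is $L_0(\delta/2,S)$. Uniform compactness of $(\Sigma^l_{x_0}X,\angle_{x_0})$ (Lemma~\ref{lem:uniform_compactness_space_of_directions_common_length}) then yields $i<j$ with a small common-scale angle. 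Lemma~\ref{lemma:quantitive_discrepancy_angles} then says $x_j$ is a $(1,\delta)$-strainer at $x_i$, which contradicts $x_i\notin\mathcal{A}(1,\delta)$. Applied in a bounded neighbourhood to a $\delta^{-1}r$-separated family for any fixed scale, this shows the singular set has no accumulation points. The separation condition is automatic once we pass to a sequence accumulating at $x$, after extracting a subsequence with geometrically decreasing distances.

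\emph{Dimension bound for $k\geq 2$.} Choose $\varepsilon<\delta_{k-1}$ small and $\delta_1<\bar\delta(\varepsilon,S)$ small, so that $\max\{5\delta_1+2\varepsilon,\ 3\delta_1+\varepsilon\}\leq\delta$. Then write
\[X\setminus\mathcal{A}(k,\delta,\delta)\subset \bigl(X\setminus\mathcal{A}(k-1,\delta_1,\delta_1)\bigr)\cup\bigl(\mathcal{A}(k-1,\delta_1,\delta_1)\setminus\mathcal{A}(k,\delta,\delta)\bigr).\]
By induction on $k$, the first set has dimension at most $k-2$; the base case uses discreteness. For the second set, cover $\mathcal{A}(k-1,\delta_1,\delta_1)$ by countably many balls $B(x,r_x)$ on which a common $R$-long $(k-1,\delta_1,\delta_1)$-strainer exists. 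This is possible by Proposition~\ref{prop:almost_orthogonality} and properness, taking $R$ as a rational lower bound of the strainer distances. On each ball, $\mathcal{A}(k,\delta_1,\max\{\delta_1,3\delta_1+\varepsilon\};r)\subset\mathcal{A}(k,\delta,\delta)$. So Lemma~\ref{lemma:singular_set_estimate}, applied with $k-1$ in place of $k$, shows that for each $0<r<\delta_1 r_x$ the set $B(x,r_x)\setminus\mathcal{A}(k,\delta,\delta)$ is covered by at most $K\cdot\#\{m\}$ balls of radius $2\delta_1^{-1}r$. Here the number of cylinders $D_x(r,m)$ meeting $B(x,r_x)$ is $O((r_x/r)^{k-1})$, since each coordinate $|p_ix|-|p_iz|$ ranges over an interval of length at most $r_x$. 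This gives $\beta(\delta_1^{-1}r)\lesssim r^{-(k-1)}$, hence $\dim_H\le\dim_r\le k-1$ on each ball by \eqref{eq:relation_different_dims}, and countable stability finishes.

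\emph{The case $k=1$.} Here the claim is $\dim_H\leq 0$, which follows from discreteness since countable sets have Hausdorff dimension $0$ in a separable space.

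The main obstacle I expect is the discreteness step. Lemma~\ref{lemma:singular_set_estimate} is stated only for $k\geq1$ with a base strainer, so the zero-strainer version must be carried out by hand. That means checking carefully that Lemma~\ref{lemma:quantitive_discrepancy_angles} delivers a $(1,\delta)$-strainer rather than a $(1,2\delta)$-strainer, which requires using parameter $\delta/2$, and that the chosen points are genuinely distinct and correctly ordered. A secondary bookkeeping issue is ensuring that the $R$-long condition and the mismatch between parameters $(\delta_1,3\delta_1+\varepsilon)$ and $(\delta,\delta)$ are compatible with the inclusion of almost regular sets.
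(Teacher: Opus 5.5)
Your proposal is correct and follows essentially the same route as the paper. The discreteness step is the paper's argument via Lemma~\ref{lemma:number_points}, uniform compactness and Lemma~\ref{lemma:quantitive_discrepancy_angles} with parameter $\delta/2$; since any finite set is $r$-separated for some $r>0$, it actually gives the global bound $\#\bigl(X\setminus\mathcal{A}(1,\delta)\bigr)\le K(N,\delta,S)$, with no accumulation or boundedness discussion needed. The dimension bound uses Lemma~\ref{lemma:singular_set_estimate}, the cylinder count, rough dimension and a countable cover exactly as the paper does. The only difference is bookkeeping, and both versions are valid. The paper inducts with the fixed pair $(\delta,3\delta+\varepsilon)$, which Lemma~\ref{lemma:singular_set_estimate} reproduces, and converts to $(\delta',\delta')$ only at the end. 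You instead induct on the $(\delta,\delta)$ statement for all $\delta$ and shrink to $\delta_1$ at each level.
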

\begin{proof}
  We first show that $X\setminus \mathcal{A}(1,\delta)$ is discrete.
  In fact, we show that the cardinality of $X\setminus \mathcal{A}(1,\delta)$ is at most $K(N,\delta,S)$, where $K$ is the constant in Lemma~\ref{lemma:singular_set_estimate}.
  Suppose, by contradiction, that this bound fails.
  Then $X\setminus \mathcal{A}(1,\delta)$ contains at least $K(N,\delta,S)+1=\bar{K}(N,L_0,M)$ points.
  Following the same argument as in Step 1 of the proof of Lemma~\ref{lemma:singular_set_estimate}, we can find three points $x_0,x_i,x_j\in X\setminus \mathcal{A}(1,\delta)$ such that $x_j$ is a $(1,\delta)$-strainer at $x_i$ with the opposite strainer $x_0$.
  This contradicts $x_i\notin \mathcal{A}(1,\delta)$.
  Hence the cardinality of $X\setminus \mathcal{A}(1,\delta)$ is at most $K(N,\delta,S)$.
  In particular, $X\setminus \mathcal{A}(1,\delta)$ is discrete and $\dim_H(X\setminus \mathcal{A}(1,\delta))=0$.

  We now prove the second assertion.
  Let $0<\varepsilon<\delta_n/3$, and let $\tilde{\delta}:=\tilde{\delta}(\varepsilon,S)$ be the constant given by Lemma~\ref{lemma:verification_WQC_II} such that $X$ satisfies the $(\varepsilon,\delta)$-weak quadruple condition for all $\delta\in (0,\tilde{\delta}]$.
  Fix $0<\delta<\min\{\tilde{\delta},1/2\}$ sufficiently small so that $3\delta+\varepsilon<1/2$.
  We prove by induction that
  \begin{equation}
    \dim_{H}(X\setminus \mathcal{A}(k,\delta,3\delta + \varepsilon))\leq k-1,
  \end{equation} 
  for $k=1,\ldots,n$.
  The case $k=1$ follows directly from the first assertion.
  Suppose that $\dim_{H}(X\setminus \mathcal{A}(k,\delta, 3\delta + \varepsilon))\leq k-1$ for some $k\in \{1,\ldots,n-1\}$.

  For any $x\in \mathcal{A}(k,\delta,3\delta + \varepsilon)$, we can find $R_x>0$ such that $x$ admits a $4R_x$-long $(k,\delta, 3\delta+\varepsilon)$-strainer $\pmb{p}$.
  By Proposition~\ref{prop:almost_orthogonality}, after possibly shrinking the neighborhood, there exists an open neighborhood $U_x$ of $x$ such that $\pmb{p}$ is an $R_x$-long $(k,\delta, 3\delta+\varepsilon)$-strainer on $U_x$ with common opposite strainer $\pmb{q}$.
  Let $r_x>0$ be the radius given by Lemma~\ref{lemma:singular_set_estimate}.

  We claim that the Hausdorff dimension of $B(x,r_x)\setminus \mathcal{A}(k+1,\delta, 3\delta + \varepsilon)$ is at most $k$.
  Indeed, it is straightforward to verify that $B(x,r_x)$ can be covered by the cylindrical regions $\{D_x(r,m)\}_{m\in \mathbb{Z}^k}$, and that the number of nonempty such regions is bounded above by $(2r_x/(0.1r))^k$.
  Applying Lemma~\ref{lemma:singular_set_estimate} with $\alpha:=3\delta + \varepsilon$, we obtain that, for every $0<r<\delta r_x$,
  \begin{multline}\label{eq:singular_set_dimension_1}
    \beta_{B(x,r_x)\setminus \mathcal{A}(k+1, \delta, 3\delta + \varepsilon)}\left(\delta^{-1}r\right)
    \leq
    \sum_{\substack{m\in \mathbb{Z}^k\\D_x(r,m)\neq \emptyset}}\beta_{D_x(r,m)\setminus \mathcal{A}(k+1, \delta, 3\delta + \varepsilon)}\left(\delta^{-1}r\right)\\
    \leq
    \sum_{\substack{m\in \mathbb{Z}^k\\D_x(r,m)\neq \emptyset}}\beta_{D_x(r,m)\setminus \mathcal{A}(k+1, \delta, 3\delta + \varepsilon; r)}\left(\delta^{-1}r\right)
    \leq
    \left(\frac{2r_x}{0.1r}\right)^k K(N,\delta,S).
  \end{multline}
  Hence, for every $a>0$, it follows that
  \begin{equation}
    \limsup_{r\to 0}\left(\delta^{-1}r\right)^{k+a}\beta_{B(x,r_x)\setminus \mathcal{A}(k+1, \delta, 3\delta + \varepsilon)}\left(\delta^{-1}r\right)
    =
    0.
  \end{equation}
  Thus the rough dimension of $B(x,r_x)\setminus \mathcal{A}(k+1, \delta, 3\delta + \varepsilon)$ is at most $k$; see Section~\ref{subsect:basics}. 
  Consequently, by \eqref{eq:relation_different_dims}, it follows that $\dim_H(B(x,r_x)\setminus \mathcal{A}(k+1, \delta, 3\delta + \varepsilon))\leq k$.

  Finally, since $X$ is proper, we may cover $\mathcal{A}(k,\delta, 3\delta + \varepsilon)$ by at most countably many such balls $B(x,r_x)$.
  Then it follows that
  \begin{equation}
    \dim_H\left(\mathcal{A}(k,\delta, 3\delta + \varepsilon) \setminus \mathcal{A}(k+1, \delta, 3\delta + \varepsilon)\right)
    \leq
    k.
  \end{equation}
  Combining this with the induction hypothesis, we obtain
  \begin{multline}
    \dim_H\left(X\setminus \mathcal{A}(k+1, \delta, 3\delta + \varepsilon)\right)\\
    =
    \max\left\{\dim_H\left(X\setminus \mathcal{A}(k,\delta, 3\delta + \varepsilon)\right), \dim_{H}\left(\mathcal{A}(k,\delta, 3\delta + \varepsilon)\setminus \mathcal{A}(k+1, \delta, 3\delta + \varepsilon) \right)\right\}\\
    \leq
    k.
  \end{multline}
  This closes the induction.
  Given $0<\delta'<1/2$, choose $0<\varepsilon<\delta_n/3$ and then choose $0<\delta<\min\{\tilde{\delta}(\varepsilon,S),1/2\}$ sufficiently small so that
  \[
    \delta<\delta',
    \qquad
    3\delta+\varepsilon\leq \delta'.
  \]
  Then
  \[
    \mathcal{A}(k,\delta,3\delta+\varepsilon)\subset \mathcal{A}(k,\delta',\delta'),
  \]
  and hence
  \[
    X\setminus \mathcal{A}(k,\delta',\delta')
    \subset
    X\setminus \mathcal{A}(k,\delta,3\delta+\varepsilon).
  \]
  The desired estimate follows from the preceding bound.
\end{proof}

As a direct consequence of Theorem~\ref{thm:singular_set_dimension}, we obtain the following measure-theoretic stratification of finite-dimensional $S$-concave Busemann concave spaces.
\begin{corollary}[Measure-theoretic stratification]\label{cor:measure_stratification}
    Let $X$ be an $n$-dimensional $S$-concave Busemann concave space with $S\geq 1$.
    Then $X$ admits a measure-theoretic stratification $\{X_k\}_{k=0}^n$ such that $X$ is the disjoint union of the sets $X_1\ldots,X_n$, and $\dim_{H}(X_k)\leq k$ for each $k=0,\ldots,n$.
    Moreover, the top-dimensional stratum $X_n$ is an open dense topological $n$-manifold, and the lowest-dimensional stratum $X_0$ is discrete.
\end{corollary}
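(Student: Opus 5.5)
The stratification will be built directly from the almost regular sets of Theorem~\ref{thm:singular_set_dimension} and Theorem~\ref{thm:top_manifold_part}. First fix $0<\delta<1/2$ small enough that, by Theorem~\ref{thm:top_manifold_part}, $\mathcal{A}(n,\delta,\delta)$ is an open dense topological $n$-manifold. Using the nestedness $\mathcal{A}(k+1,\delta,\delta)\subset\mathcal{A}(k,\delta,\delta)$, which holds because a $(k+1,\delta,\delta)$-strainer has a $(k,\delta,\delta)$-strainer as its first $k$ entries by the inductive Definition~\ref{def:k_strainer}, set
\[
X_n:=\mathcal{A}(n,\delta,\delta),\qquad X_k:=\mathcal{A}(k,\delta,\delta)\setminus\mathcal{A}(k+1,\delta,\delta)\ (1\le k\le n-1),\qquad X_0:=X\setminus\mathcal{A}(1,\delta).
\]
Here $\mathcal{A}(1,\delta,\delta)=\mathcal{A}(1,\delta)$. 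These sets are pairwise disjoint, and their union is $X$, since every point lies in the complement of $\mathcal{A}(1,\delta)$ or in a maximal $\mathcal{A}(k,\delta,\delta)$. In the statement the union should run over $X_0,\dots,X_n$.

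Next come the dimension bounds. For $1\le k\le n-1$ we have $X_k\subset X\setminus\mathcal{A}(k+1,\delta,\delta)$, so Theorem~\ref{thm:singular_set_dimension} applied with index $k+1\le n$ gives $\dim_H(X_k)\le k$. For $k=n$, $\dim_H(X_n)\le\dim_H(X)=n$ by Proposition~\ref{prop:MCP}. For $k=0$, the first assertion of Theorem~\ref{thm:singular_set_dimension} says $X_0$ is discrete (its proof even gives finitely many points), hence $\dim_H(X_0)=0$.

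Finally, $X_n$ is open, dense, and a topological $n$-manifold by Theorem~\ref{thm:top_manifold_part}, given the choice of $\delta$. The only point needing care is that the manifold statement there requires $\delta$ sufficiently small, whereas the dimension bounds hold for every $\delta\in(0,1/2)$. So one simply chooses a single $\delta$ meeting both requirements; no real obstacle remains.
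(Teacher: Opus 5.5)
Your proposal is correct and follows essentially the same route as the paper. You use the same strata, namely $X_0=X\setminus\mathcal{A}(1,\delta)$, $X_k=\mathcal{A}(k,\delta,\delta)\setminus\mathcal{A}(k+1,\delta,\delta)$ and $X_n=\mathcal{A}(n,\delta,\delta)$, with a single small $\delta$ from Theorem~\ref{thm:top_manifold_part} and dimension bounds from Theorem~\ref{thm:singular_set_dimension}. The paper's extra remark that $\mathcal{A}(n+1,\delta,\delta)=\emptyset$ via self-improvement is not actually needed once $X_n$ is defined as $\mathcal{A}(n,\delta,\delta)$ and bounded by $\dim_H X=n$, as you do. Your explicit use of nestedness and your correction of the union to $X_0,\ldots,X_n$ are both right.
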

\begin{proof}
  First, observe that $\mathcal{A}(n+1, \delta, \delta)=\emptyset$ for every $0<\delta<1/2$.
  Otherwise, the self-improvement property of strainers (Lemma~\ref{lemma:almost_self_improvement}) would imply that the strainer number of $X$ is at least $n+1$, contradicting the assumption that $\dim_H(X)= n$.

  Now choose $0<\delta<1/2$ sufficiently small as in Theorem~\ref{thm:top_manifold_part}, so that $\mathcal{A}(n,\delta,\delta)$ is an open dense topological $n$-manifold.
  Set
  \[
    X_0:=X\setminus \mathcal{A}(1,\delta,\delta),
  \]
  and, for $1\leq k\leq n-1$,
  \[
    X_k:=\mathcal{A}(k,\delta,\delta)\setminus \mathcal{A}(k+1,\delta,\delta),
    \qquad
    X_n:=\mathcal{A}(n,\delta,\delta).
  \]
  Then $X$ decomposes into the disjoint union of the sets $X_0,\ldots,X_n$.
  The remaining assertions follow directly from Theorem~\ref{thm:singular_set_dimension}.
\end{proof}


\medskip

\section{Further problems and discussion}\label{sect:problems}
\noindent
In this final section, we record some problems concerning the weak quadruple condition and finer geometric and topological structures of $S$-concave Busemann concave spaces.
These questions will be investigated in future work.
For related problems concerning Busemann convex spaces, we refer to \cite{fujioka2025top,fujioka2026busemann}.

The first problem concerns the characterization of the weak quadruple condition introduced in this paper.
It is well-known that Alexandrov spaces with curvature bounded below can equivalently be described by the classical quadruple condition.
This condition, in turn, admits several equivalent characterizations, including the so-called \emph{Lang--Schroeder--Sturm inequality}.
Such an inequality was first discovered by Lang--Schroeder \cite{lang1997kirszbraun} and later rediscovered by Sturm \cite{sturm1999metric} (see also \cite{yokota2012rigidity,alexander2024alexandrov}).
On the other hand, the classical quadruple condition can also be characterized by isometric embeddings of quadruples into model spaces of constant curvature, as shown by Berestovskii \cite{berestovskii1986spaces}, following Wald \cite{wald1935begrundung}; see \cite{plaut1996spaces, lang1997kirszbraun} for more details.
Motivated by these characterizations, we ask:

\begin{problem}
  Is there an analogous discrete characterization of $S$-concave Busemann concave spaces, or of the $(\varepsilon,\delta)$-weak quadruple condition, in terms of mutual distances of finite sequences of points similar to the Lang--Schroeder--Sturm inequality?
  Separately, is there a characterization in terms of isometric embeddings of quadruples into suitable model Banach spaces?
\end{problem}

We mention that discrete characterizations by finite pairwise distance data also appear in the theory of Banach spaces, particularly in the study of \emph{roundness}; see, for example, \cite{amini2021roundness} and references therein.
It is unclear to us whether these characterizations are related to the problem posed here.

The next problem asks about curvature-dimension conditions in this setting, beyond the measure contraction property established in Proposition~\ref{prop:MCP}.
\begin{problem}
  Does an $n$-dimensional $S$-concave Busemann concave space, equipped with the $n$-dimensional Hausdorff measure, satisfy the $\mathrm{CD}(0,n)$ condition?
  More generally, if a Busemann concave space admits a nontrivial $n$-dimensional Hausdorff measure, does it satisfy the $\mathrm{CD}(0,n)$ condition?
\end{problem}
It is well-known that finite-dimensional Alexandrov spaces with curvature bounded below satisfy the $\mathrm{CD}$ condition.
This compatibility was first shown by Petrunin \cite{petrunin2010alexandrov} for Alexandrov spaces with non-negative curvature, and later extended by Zhang--Zhu \cite{zhang2009ricci} to the general case of Alexandrov spaces with curvature bounded below.
On the other hand, it has been shown by Ohta \cite{ohta2009finsler} that the $\mathrm{CD}$ condition extends naturally to the Finsler setting.
In particular, any connected, forward geodesically complete, smooth $n$-dimensional Finsler manifold of Berwald type, with non-negative flag curvature and equipped with the Busemann--Hausdorff measure, satisfies the $\mathrm{CD}(0,n)$ condition; see \cite[Theorem 1.2 and Remark 8.2(b)]{ohta2009finsler}.

The next problem concerns the rigidity of tangent cones of $S$-concave Busemann concave spaces.
\begin{problem}
  Let $X$ be an $n$-dimensional $S$-concave Busemann concave space with $S\geq 1$.
  Does $X$ admit a `Berwald-type' structure, in the sense that all of its Banach tangent cones are isometric to the same Banach space?
  Furthermore, do any two interior points of a geodesic in $X$ have isometric tangent cones?  
\end{problem}
Regarding the first question, we recall that tangent cones of Finsler manifolds generally vary from point to point. 
However, Lytchak--Ivanov \cite{ivanov2019rigidity} shows that any Finsler manifold satisfying the Busemann NPC condition is necessarily a Berwald manifold with non-positive flag curvature.
In particular, a key consequence in \cite[Lemma 3.1]{ivanov2019rigidity} suggests that Busemann convexity implies strong control over the variation of tangent norms along geodesics.
Nevertheless, it remains unclear whether the first question has an affirmative answer even for general Busemann NPC spaces (see \cite[Problem 8.4]{fujioka2025top}).
The corresponding problem for $S$-concave Busemann concave spaces appears even more subtle.
Regarding the second question, the corresponding question for finite-dimensional Alexandrov spaces with curvature bounded below was conjectured by Burago--Gromov--Perelman \cite[Remark 7.18]{burago1992ad} and subsequently proved by Petrunin \cite{petrunin1998parallel}.

The next problem concerns the existence of a continuous Finsler structure on finite-dimensional $S$-concave Busemann concave spaces, motivated by the work of Otsu--Shioya \cite{otsu1994riemannian} on finite-dimensional Alexandrov spaces with curvature bounded below.

\begin{problem}\label{problem:Finsler_structure}
  Let $X$ be an $n$-dimensional $S$-concave Busemann concave space with $S\geq 1$.
  Does $X$ admit a $C^0$-Finsler structure on $X$ outside some singular set such that the metric induced by this Finsler structure coincides with the original metric of $X$?
\end{problem}
In the Alexandrov setting, Otsu--Shioya \cite{otsu1994riemannian} proved an even stronger result: they established not only the existence of a continuous Riemannian structure on the regular part of a finite-dimensional Alexandrov space with curvature bounded below, but also the $C^{1/2}$-continuity of this Riemannian metric on a certain subset of the regular part.
On the other hand, in the setting of Busemann $\mathrm{G}$-spaces, Pogorelov \cite{pogorelov1998busemann} obtained a continuous Finsler structure under some additional regularity assumption, called \emph{Axiom A}, concerning the continuous differentiability of distance functions.
More recently, Fujioka--Gu \cite{fujioka2026finsler} studied Busemann $\mathrm{G}$-spaces satisfying local semi-concavity or semi-convexity, and obtained Finsler structures of higher regularity for these spaces.
However, the local geodesic extension property and the continuity of angles play important roles in the results for Busemann $\mathrm{G}$-spaces, whereas neither property is available for $S$-concave Busemann concave spaces.

Our final two problems concern the finer geometric and topological properties of finite-dimensional $S$-concave Busemann concave spaces.
\begin{problem}
  Let $X$ be an $n$-dimensional $S$-concave Busemann concave space with $S\geq 1$.
  Does the set of interior singular points of $X$, in a suitable sense, have Hausdorff codimension at least $2$?
  Does $X$ admit a canonical stratification whose strata are topological manifolds?
\end{problem}

It is unclear to us what the appropriate definition of interior points should be used.
In the Alexandrov setting, interior points are understood as non-boundary points, where the boundary is defined inductively based on the metric structure of tangent cones; see \cite[Definition 7.19]{burago1992ad}.
This definition does not directly extend to our setting, since tangent cones of $S$-concave Busemann concave spaces need not be metric cones.

On the other hand, known notions of interior points seem too restrictive for the present purpose. 
For example, \emph{geometrically inner points} in the sense of Lytchak--Schroeder \cite[Definition 1.4]{lytschak2004affine} (see also \cite[Section 3]{fujioka2026busemann}) are regular in the Alexandrov setting, as follows from the maximal radius theorem; see \cite[Corollary 16.25]{alexander2024alexandrov} and \cite{grove2022alexandrov}.
However, Alexandrov spaces may have many interior points that are not regular.
Regarding stratification, we recall that Perelman \cite{perel1994elements} established such a topological stratification for finite-dimensional Alexandrov spaces with curvature bounded below by proving, via Morse theory for distance functions, that they are $\mathrm{MCS}$-spaces\footnote{Here $\mathrm{MCS}$ stands for \emph{multiple conic singularities.}}.
It is unclear to us whether this approach can be adapted to the setting of $S$-concave Busemann concave spaces.

\begin{problem}\label{pbm:two_sides}
  Let $X$ be an $n$-dimensional $S$-concave Busemann concave space with $S\geq 1$, which is further locally semi-convex in the sense of \cite{han2025structure}.
  Is $X$ a topological $n$-manifold, possibly with boundary?
  Does it admit local charts with higher regularity?
\end{problem}
We mention that Fujioka--Gu \cite{fujioka2026finsler} obtained related results under the additional assumption that the space is a $\mathrm{G}$-space.
In particular, they showed that such spaces are topological manifolds and admit $C^{1,1/2}$-atlases; see \cite[Theorem~1.1 and~1.2]{fujioka2026finsler} for precise statements.
In the Alexandrov setting, spaces with two-sided Alexandrov bounds have been previously studied by Alexandrov, Nikolaev and Berestovskii; see \cite{berestovskij1993multidimensional}.
See also \cite{kapovitch2020cd,kapovitch2022structure} for structural results on $\mathrm{CD}$ spaces with Alexandrov curvature bounded above, and the recent work of Fujioka--Tashiro \cite{fujioka2026busemann} on Busemann NPC spaces with synthetic lower curvature bounds in the sense of the measure contraction property.
These works provide related evidence that combining upper and lower synthetic curvature bounds may lead to stronger manifold regularity.

\bigskip

\sectionnotoc{Acknowledgments}
\noindent
The authors would like to express their sincere gratitude to Professors Tadashi Fujioka and Shijie Gu for helpful discussions on the finer structures of Busemann and $\mathrm{G}$-spaces, weak quadruple comparison, and for bringing Problem~\ref{pbm:two_sides} and the reference \cite{pogorelov1998busemann} to their attention.
They are also grateful to Professor Nan Li for helpful discussions on the possible extension of Alexandrov's lemma to the Busemann setting. 
The authors are also indebted to Professors Alexander Lytchak and Shin-ichi Ohta for their interest in this work, their helpful comments and stimulating discussions, and for pointing out the relevance of the structure theory of sets with positive reach.
Part of this work was developed during the authors' stay in Fukuoka for the conference `The 18th MSJ-SI: Analysis, Geometry and Probability on Metric Measure Spaces'.
\bigskip

\sectionnotoc{Funding}
\noindent
This work is supported in part by the National Natural Science Foundation of China (Nos.~12201596 and 12431004), the Shandong Provincial Natural Science Foundation (ZR2025QB05), the Taishan Scholars Program of Shandong Province (tsqn202408059), and the Scientific Research Innovation Capability Support
Project for Young Faculty (SRICSPYF-ZY2025160).

\bigskip

\sectionnotoc{Declaration}
\noindent
The authors declare that there is no conflict of interest and this paper has no associated data.
During the preparation of this manuscript, the authors used ChatGPT (OpenAI) only for partial language polishing. 
All mathematical arguments, results and proofs were thoroughly produced, reviewed and verified by the authors.

\bigskip
\bigskip

\begin{flushleft}
\small \normalfont
\textsc{Bang-Xian Han\\
School of Mathematics, Shandong University, Jinan, 250100, China}\\
\texttt{\textbf{hanbx@sdu.edu.cn}}
\end{flushleft}

\medskip
\begin{flushleft}
\small \normalfont
\textsc{Liming Yin\\
School of Mathematical Sciences, University of Science and Technology of China, Hefei, 230026, China}\\
\texttt{\textbf{yinliming@ustc.edu.cn}}
\end{flushleft}

\end{document}